\newtheorem{theorem}{Theorem}[section]
\newtheorem{proposition}[theorem]{Proposition}
\newtheorem{lemma}[theorem]{Lemma}
\newtheorem{corollary}[theorem]{Corollary}
\newtheorem{remark}[theorem]{Remark}
\theoremstyle{definition}
\newcommand{\E}{{\mathbb E}}
\newcommand{\R}{{\mathbb R}}
\renewcommand{\P}{{\mathbb P}}
\newcommand{\C}{{\mathcal{C}}}
\newcommand{\T}{{\mathcal{T}}}
\newcommand{\K}{{\mathcal{K}}}
\newcounter{rcnt}[section]
\def\qt#1{\qquad\text{#1}}
\def\argmin{\mathop{\rm argmin}}
\newcommand{\eat}[1]{} 
\newcommand{\op}[1]{\operatorname{#1}}
\renewcommand{\S}{{\mathcal{S}}}
\renewcommand{\b}[1]{\boldsymbol{\mathbf{#1}}}
\DeclarePairedDelimiter{\parens}{(}{)}
\DeclarePairedDelimiter{\braces}{\{}{\}}
\DeclarePairedDelimiter{\brackets}{[}{]}
\DeclarePairedDelimiter{\abs}{|}{|}
\DeclarePairedDelimiter{\norm}{\|}{\|}
\DeclarePairedDelimiter{\inner}{\langle}{\rangle}
\begin{document}

\title{On the risk of convex-constrained least squares estimators
    under misspecification}
\author{Billy Fang \qquad Adityanand Guntuboyina\\Department of Statistics, University of California, Berkeley}
\date{June 13, 2017}

\maketitle

\begin{abstract}
We consider the problem of estimating the
mean of a noisy vector.
When the mean lies in a convex constraint set,
the least squares projection of the random vector
onto the set is a natural estimator.
Properties of the risk of this estimator, such as
its asymptotic behavior as the noise tends to zero, have been well studied.
We instead study the behavior of this estimator under misspecification,
that is, without the assumption that the mean lies in the constraint set.
For appropriately defined notions of risk in the misspecified setting,
we prove
a generalization of a low noise characterization
of the risk due to \citet{oymak2013sharp} in the case of a polyhedral
constraint set.
An interesting consequence of our results is that the risk can be much smaller
in the misspecified setting than in the well-specified setting.
We also discuss consequences of our result for isotonic regression.
\end{abstract}

\section{Introduction}
In many statistical problems, it is common to model the observations
$y_1, \dots, y_n \in \R$ as
$y_i = \theta^*_i + \sigma z_i$
where $\theta_1^*, \dots, \theta_n^*$ are unknown parameters of
interest, $z_1, \dots, z_n$ represent noise or error variables that
have mean zero, and $\sigma > 0$ denotes a scale parameter.
In vector notation, this is
equivalent to writing
\begin{equation}
Y = \theta^* + \sigma Z,
\end{equation}
where $Y \coloneqq (y_1, \dots, y_n)$,
$\theta^* \coloneqq (\theta^*_1, \dots, \theta^*_n)$,
and $Z \coloneqq (z_1,\ldots, z_n)$.
A common instance of this model is the Gaussian sequence model,
where the $z_1, \dots, z_n$ are independent standard Gaussian random
variables, in which case the model can be written as $Y \sim
N(\theta^*, \sigma^2 I_n)$, where $I_n$ is the $n \times n$ identity
matrix.

A standard method of estimating $\theta^*$ from the observation vector
$Y$ is to fix a closed convex set $\C$ of $\R^n$ and use the least
squares estimator under the constraint given by $\theta \in
\C$. Specifically, the least squares projection is
\begin{equation}
    \Pi_{\C}(x) \coloneqq \argmin_{\theta \in \C} \norm{x  - \theta}^2,
\end{equation}
(where $\|\cdot\|$
denotes the standard Euclidean norm in $\R^n$),
and one estimates $\theta^*$ by
\begin{equation}
\hat{\theta}(Y) \coloneqq \Pi_{\C}(Y).
\end{equation}
When $\C$ is taken to be $\{X \beta : \|\beta\|_1 \leq R\}$ for some
deterministic $n \times p$ matrix $X$ and $R > 0$, this estimator
becomes LASSO in the constrained form as originally proposed by
\citet{tibshirani1996regression}. When $\C$ is taken to be $\{X
\beta : \min_j \beta_j \geq 0 \}$, this estimator becomes nonnegative
least squares. Note that shape restricted regression estimators are
special cases of nonnegative least squares for appropriate choices of
$X$ (see, for example, \citet{groeneboom2014nonparametric}). Also, note that
both sets $\{X\beta : \|\beta\|_1 \leq R\}$ and $\{X \beta : \min_j \beta_j
\geq 0\}$ are examples of polyhedral sets. Therefore in most
applications, the constraint set $\C$ is polyhedral.

There exist many results in the literature studying the accuracy of
$\hat{\theta}(Y)$ as an estimator for $\theta^*$. Most of these
results make the assumption that $\theta^* \in \C$. In this paper, we
shall refer to this assumption as the \textit{well-specified}
assumption. Essentially, the constraint set $\C$ can be taken to be a
part of the model specification, and the assumption $\theta^* \in \C$
means that the true mean vector $\theta^*$ satisfies the model
assumptions, i.e. the model is well-specified.

Under the well-specified assumption, it is reasonable and common to
measure the accuracy of $\hat{\theta}(Y)$ via its risk under squared
Euclidean distance. More precisely, the risk of $\hat{\theta}(Y)$ is
defined by
\begin{equation*}
R(\hat{\theta}, \theta^*) \coloneqq  \E_{\theta^*} \|\hat{\theta}(Y) -
\theta^* \|^2
\end{equation*}
where $\E_{\theta^*}$ refers to expectation taken with respect to the
noise $Z$ in the model $Y = \theta^* + \sigma Z$.

Many results on $R(\hat{\theta}, \theta^*)$ in the well-specified
setting  are available in the literature. Of all the available
results, let us isolate two results from \citet{oymak2013sharp}
because of their generality. In the setting where $Z \sim N(0, I_n)$,
\citet{oymak2013sharp} first proved the
upper bound
\begin{equation}\label{ohub}
\frac{1}{\sigma^2} R(\hat{\theta}, \theta^*) \leq \delta(T_{\C}(\theta^*)),
\end{equation}
where $T_\C(\theta^*)$ denotes the \textit{tangent cone} of $\C$ at
$\theta^*$, defined by
\begin{equation}\label{tcd}
  T_{\C}(\theta^*) = \op{cl}\left\{\alpha(\theta - \theta^*) : \alpha \geq
    0, \theta \in \C \right\},
\end{equation}
(``$\op{cl}$'' denotes closure),
and where $\delta \left(T_{\C}(\theta^*)\right)$
denotes the \textit{statistical dimension} of the cone
$T_{\C}(\theta^*)$.
In general, the statistical dimension of a closed
cone $T \subseteq \R^n$ is defined as
\begin{equation}\label{equation:stat_dim}
\delta(T) \coloneqq \E \|\Pi_{T} (Z) \|^2,
\end{equation}
where the expectation is with respect to $Z \sim N(0, I_n)$.
Many properties of the statistical dimension are covered by \citet{amelunxen2014living}.

In the case when the constraint set $\C$ is a subspace, the estimator
$\hat{\theta}(Y)$ is linear and, in this case, it is easy to see that
$\delta(T_{\C}(\theta^*))$ is simply the dimension of $\C$, so that
inequality \eqref{ohub} becomes an equality. For general closed convex
sets, it is therefore reasonable to ask how tight inequality
\eqref{ohub} is. It is not hard to construct examples of $\C$ and
$\theta^* \in \C$ where inequality \eqref{ohub} is loose for fixed $\sigma>0$.
However \citet{oymak2013sharp} proved remarkably that the upper bound in
\eqref{ohub} is tight in the limit as $\sigma \downarrow 0$ (we shall
refer to this in the sequel as the \textit{low $\sigma$ limit});
that is, when $Z \sim N(0,I_n)$,
\begin{equation}\label{equation:oymakhassibi}
  \lim_{\sigma \downarrow 0} \frac{1}{\sigma^2} R(\hat{\theta},
  \theta^*) = \delta (T_{\C}(\theta^*)).
\end{equation}
In summary, \citet{oymak2013sharp} proved that $\sigma^2
\delta(T_{\C}(\theta^*))$ is a nice formula for the risk of
$\hat{\theta}(Y)$ that is, in general, an upper bound which is
tight in the low $\sigma$ limit.

We remark that although \citet{oymak2013sharp} state the results
\eqref{ohub} and \eqref{equation:oymakhassibi}
for the specific case $Z \sim N(0, I_n)$,
their proof automatically extends to the more general setting where $Z$ is
an arbitrary zero mean random vector with $\E \norm{Z}^2 < \infty$
(the components $Z_1, \dots, Z_n$ of $Z$ can be arbitrarily
dependent), provided we generalize the definition \eqref{equation:stat_dim}
of statistical dimension by taking the expectation
with respect to $Z$, without assuming $Z$ is standard Gaussian.
We refer to this modification of the definition \eqref{equation:stat_dim}
as the
\textit{generalized statistical dimension} of the cone $T$.
As a slight abuse of notation, we use the same notation $\delta(\cdot)$ for
this more general concept, with the understanding that the expectation in the definition
is with respect to the distribution of $Z$.
By dropping the Gaussian assumption,
the generalized statistical dimension loses much of the
interpretability and nice geometric properties of the usual statistical dimension
\cite{amelunxen2014living}, but still serves as an abstract notion of the size of a cone $T$
with respect to a distribution $Z$.

This paper deals with the behavior of the estimator $\hat{\theta}(Y)$
when the assumption $\theta^* \in \C$ is violated. We shall refer to
the situation when $\theta^* \notin \C$ as the \textit{misspecified}
setting. Note that, in practice, one can never know if the unknown
$\theta^*$ truly lies in $\C$. It is therefore necessary to study the
behavior of $\hat{\theta}(Y)$ under misspecification.

For the misspecified setting, one must first note that it is no longer
reasonable to measure the performance of $\hat{\theta}(Y)$ by the risk
$R(\hat{\theta}, \theta^*)$, simply because $\hat{\theta}(Y)$
is constrained to be in $\C$ and hence
cannot be expected to be close to $\theta^*$ which is essentially
unconstrained. There are two natural notions of accuracy of
$\hat{\theta}(Y)$ in the misspecified setting, which we call
the \textit{misspecified risk} and the \textit{excess risk}.
The misspecified risk is defined as
\begin{equation}\label{equation:misspecified_risk}
M(\hat{\theta}, \theta^*) \coloneqq  \E_{\theta^*} \|\hat{\theta}(Y) -
\Pi_{\C}(\theta^*) \|^2,
\end{equation}
and the excess risk is defined as
\begin{equation}\label{equation:excess_risk}
E(\hat{\theta}, \theta^*) \coloneqq  \E_{\theta^*} \|\hat{\theta}(Y) -
\theta^*\|^2 - \|\Pi_{\C}(\theta^*) - \theta^* \|^2.
\end{equation}
The misspecified risk, $M(\hat{\theta}, \theta^*)$, is motivated by
the observation that, in the misspecifed case, the estimator
$\hat{\theta}(Y)$ is really estimating $\Pi_{\C}(\theta^*)$ so it is
natural to measure its squared distance from $\Pi_{\C}(\theta^*)$. On
the other hand, the excess risk, $E(\hat{\theta}, \theta^*)$, measures
the squared distance of the estimator from $\theta^*$ relative to the squared
distance of $\Pi_{\C}(\theta^*)$ from $\theta^*$. We refer the reader to
\citet{bellec2015sharpshape} and \autoref{section:basic_properties}
for some background and basic properties
on these notions of accuracy under misspecification. For example, it
can be shown that $M(\hat{\theta}, \theta^*)$ is always less than or
equal to $E(\hat{\theta}, \theta^*)$ (see \eqref{equation:pythagorean_inequality}).
It is easy to see that both of
these risk measures equal $R(\hat{\theta}, \theta^*)$ in the
well-specifed case i.e.,
\begin{equation}
  R(\hat{\theta}, \theta^*) =
  M(\hat{\theta}, \theta^*) = E(\hat{\theta}, \theta^*),
  \qt{when $\theta^* \in \C$}.
\end{equation}

An analogue to inequality \eqref{ohub} for the case of
misspecification has been proved by
\citet[Corollary 2.2]{bellec2015sharpshape}, who showed that
\begin{equation}\label{equation:bellec_ub}
  \frac{1}{\sigma^2} M(\hat{\theta}, \theta^*)
  \leq \frac{1}{\sigma^2} E(\hat{\theta}, \theta^*)
  \leq \delta(T_{\C}(\Pi_{\C}(\theta^*))).
\end{equation}
Again, although this was originally stated for $Z \sim N(0, I_n)$,
it holds for arbitrary zero mean random vectors $Z$ with $\E
\norm{Z}^2 < \infty$. Note the similarity between the right-hand sides
of the inequalities \eqref{ohub} and \eqref{equation:bellec_ub}. The
only difference is that the tangent cone at $\theta^*$  is replaced by
the tangent cone at $\Pi_{\C}(\theta^*)$ in the case of
misspecification. Moreover, in the well-specified setting, the above
inequality \eqref{equation:bellec_ub} reduces to \eqref{ohub}.

It is now very natural to ask if the second inequality in \eqref{equation:bellec_ub} is
tight in the low $\sigma$ limit. One might guess that this should be
the case given the result \eqref{equation:oymakhassibi} for the
well-specified setting. However, it turns out that \eqref{equation:bellec_ub} is not
sharp in the low $\sigma$ limit. The main contribution of this paper
is to provide an exact formula for the low $\sigma$ limit of
$M(\hat{\theta}, \theta^*)$ and $E(\hat{\theta},
\theta^*)$ when $\C$ is polyhedral. Specifically, in
\autoref{theorem:lb_polyhedron}, we prove that if the noise $Z$ is
zero mean with $\E \norm{Z}^2 < \infty$ and if $\C$ is polyhedral,
then
\begin{equation}\label{ikey}
\lim_{\sigma \downarrow 0} \frac{1}{\sigma^2} M(\hat{\theta},
\theta^*) = \lim_{\sigma \downarrow 0} \frac{1}{\sigma^2}
E(\hat{\theta}, \theta^*) = \delta\parens*{
    T_{\C}(\Pi_{\C}(\theta^*)) \cap
    (\theta^* - \Pi_{\C}(\theta^*))^{\perp}
},
\end{equation}
where $v^{\perp} \coloneqq \{u \in \R^n : \left<u, v \right> = 0\}$ for
vectors $v \in \R^n$. As we remarked earlier, in most
applications, the constraint set $\C$ is polyhedral.

Because the set $T_{\C}(\Pi_{\C}(\theta^*)) \cap
(\theta^* - \Pi_{\C}(\theta^*))^{\perp}$ is a subset of
$T_{\C}(\Pi_{\C}(\theta^*))$, the right hand side of \eqref{ikey} is
never larger than $\delta(T_{\C}(\Pi_{\C}(\theta^*)))$.
Under the assumption that the polyhedron $\C$ has a nonempty interior
along with a mild condition on the noise $Z$,
it can be proved that
the right hand side of \eqref{ikey} is \textit{strictly} smaller than
$\delta(T_{\C}(\Pi_{\C}(\theta^*)))$ when $\theta^* \notin \C$ (an
even stronger statement is proved in \autoref{lemma:jump}),
which then implies that
$\lim_{\sigma \downarrow 0} \sigma^{-2} M(\hat{\theta}, \theta^*) <
\lim_{\sigma \downarrow 0} \sigma^{-2} R(\hat{\theta},
\Pi_{\C}(\theta^*))$. This inequality is more interpretable in the
following form:
\begin{equation}\label{equation:risk_gap}
  \lim_{\sigma \downarrow 0} \frac{1}{\sigma^2} \E_{\theta^*}
  \|\hat{\theta}(Y) - \Pi_{\C}(\theta^*) \|^2 < \lim_{\sigma
    \downarrow 0} \frac{1}{\sigma^2} \E_{\Pi_{\C}(\theta^*)}
  \|\hat{\theta}(Y) - \Pi_{\C}(\theta^*) \|^2 \qt{whenever $\theta^*
    \notin \C$}.
\end{equation}
Inequality \eqref{equation:risk_gap} can be qualitatively understood as
follows. The left hand side above corresponds to misspecification
where the data are generated from $\theta^* \notin \C$ while the right
hand side corresponds to the well-specified setting where the data are
generated from $\Pi_{\C}(\theta^*)$. Note that in both cases, the
estimator $\hat{\theta}(Y)$ is really estimating $\Pi_{\C}(\theta^*)$
so it is natural to compare the squared expected distance to
$\Pi_{\C}(\theta^*)$ in both situations. The interesting aspect is
that (in the low $\sigma$ limit) the expected squared distance is
smaller in the
misspecified setting compared to the well-specified setting. To the
best of our knowledge, this fact has not been noted in the literature
previously at this level of generality.

Our main result, \autoref{theorem:lb_polyhedron}, is stated and proved in
\autoref{section:lb_polyhedron}
where some intuition is also
provided for the exact form of the low $\sigma$ limit in
misspecification. The low $\sigma$ limit can be explicitly computed in
certain specific situations. In \autoref{section:examples}, we
specialize to the Gaussian model $Z \sim N(0, I_n)$
and study
in detail the examples when $\C$ is the nonnegative orthant and when
$\C$ is the monotone cone (this latter case corresponds to isotonic
regression).

In \autoref{section:discussion}, we explore issues naturally related
to \autoref{theorem:lb_polyhedron}. In
\autoref{subsection:generalizing}, we consider the situation when $\C$
is not polyhedral. It seems hard to characterize the low $\sigma$
misspecification limits in this case but it is possible to compute them
when $\C$ is the unit ball. It is interesting to note that the low
$\sigma$ limits of $M(\hat{\theta}, \theta^*)$ and $E(\hat{\theta},
\theta^*)$ are different in this case (in sharp contrast to the
polyhedral situation). In \autoref{subsection:high_noise_limit}, we
deal with the risks when $\sigma$ is large. Under some conditions, it
is possible to write a formula for the large $\sigma$ limits of
$M(\hat{\theta}, \theta^*)$ and $E(\hat{\theta}, \theta^*)$; see
\autoref{proposition:high_sigma_limit}. In
\autoref{subsection:max_risk}, we deal with the maximum normalized
risks:
\begin{equation}\label{equation:max_risks}
  \sup_{\sigma > 0} \frac{1}{\sigma^2} M(\hat{\theta}, \theta^*) \quad
  \text{ and } \quad \sup_{\sigma > 0} \frac{1}{\sigma^2} E(\hat{\theta},
  \theta^*).
\end{equation}
In the well-specified setting, inequalities \eqref{ohub} and
\eqref{equation:oymakhassibi} together imply that the maximum
normalized risk equals $\delta(T_{\C}(\theta^*))$. However in the
misspecified setting, the quantities \eqref{equation:max_risks} lie between  $\delta(T_{\C}(\Pi_{\C}(\theta^*)) \cap
(\theta^* - \Pi_{\C}(\theta^*))^{\perp})$ and
$\delta(T_{\C}(\Pi_{\C}(\theta^*)))$. It seems hard to write down an
exact formula for the quantities \eqref{equation:max_risks} but we present some
simulation evidence in \autoref{subsection:max_risk} to argue that they
can be strictly between  $\delta(T_{\C}(\Pi_{\C}(\theta^*)) \cap
(\theta^* - \Pi_{\C}(\theta^*))^{\perp})$ and
$\delta(T_{\C}(\Pi_{\C}(\theta^*)))$.

We conclude with an appendix that contains technical lemmas and proofs
of the various intermediate results throughout the paper.

\section{Background and Notation} \label{section:basic_properties}
In this short section, we shall set up some notation and also
recollect standard results in convex analysis that will be used in the
remainder of the paper.

For $x \in \R^n$ and $r > 0$, we denote by $B_r(x) \coloneqq \{u \in \R^n:
\|u - x\| \leq r\}$ the closed ball of radius $r$ centered at $x$. For
$v \in \R^n$, let $v^\perp \coloneqq \{u \in \R^n : \inner{u,v}=0\}$
denote the hyperplane with normal vector $v$. For $\theta_0 \in \C$,
let $F_\C(\theta_0) \coloneqq \{\theta - \theta_0 : \theta \in \C\}$ be
the result of re-centering the set $\C$ about $\theta_0$. Also recall
the definition of the tangent cone
\eqref{tcd} and note that $T_\C(\theta_0) = \op{cl}\{\alpha x : x \in
F_\C(\theta_0), \alpha > 0\}$.

If $A$ is an $m \times n$ matrix and $J \subseteq \{1,\ldots,m\}$, we let $a_j$ denote the $j$th row of $A$, and let $A_J$ denote the matrix obtained by combining the rows of $A$ indexed by $J$.

A \textit{polyhedron} refers to a set of the form $\{x \in \R^n : Ax
\le b\}$ for some $A \in \R^{m \times n}$ and $b \in \R^n$ where the
inequality $\leq$ is interpreted coordinate-wise, i.e. $\inner{a_j, x} \le b_j$
for $j=1,\ldots,m$.
We will assume that no two pairs $(a_j, b_j)$ and $(a_k, b_k)$ are scalar multiples of each other.
A \textit{polyhedral
  cone} is a set of the form $\{x \in \R^n : A x \leq 0\}$ for some $A
\in \R^{m \times n}$.
Again, we will assume that no two rows of $A$ are scalar multiples of each other.
A \textit{face} of a polyhedron refers to any
subset obtained by setting some of the polyhedron's linear inequality
constraints to equality instead.

In the remainder of this section, we shall collect some standard
results above convex projections that will be used in the paper. These
results can be found in a standard reference such as
\cite{hiriart2012fundamentals}.  Recall that $\Pi_{\C}(x)$ denotes the
projection of a vector $x \in \R^n$ on a closed convex set $\C$. It is
well known that $\Pi_\C(x)$ is the unique vector in $\C$ satisfying
the optimality condition
\begin{equation}\label{equation:optimality condition}
\inner{z - \Pi_\C(x), x - \Pi_\C(x)} \le 0,
\qquad \forall z \in \C.
\end{equation}
Consequently, we have the following Pythagorean inequality
\begin{equation}
\norm{z - x}^2
= \norm{z - \Pi_\C(x)}^2 + \norm{\Pi_\C(x) - x}^2 + 2\inner{z - \Pi_\C(x), \Pi_\C(x) - x}
\ge \norm{z - \Pi_\C(x)}^2 + \norm{\Pi_\C(x) - x}^2.
\end{equation}
Plugging in $z=\Pi_\C(y)$ and $x = \theta^*$ shows that the
misspecified error is
upper bounded by the excess error, that is,
\begin{equation}\label{equation:pythagorean_inequality}
\norm{\Pi_\C(y) - \Pi_\C(\theta^*)}^2
\le \norm{\Pi_\C(y) - \theta^*}^2 - \norm{\Pi_\C(\theta^*) - \theta^*}^2,
\qquad \forall y \in \R^n.
\end{equation}
If instead we plug in $z = \Pi_\C(\theta^*)$ to \eqref{equation:optimality condition}, we have $\inner{\Pi_\C(x) - \Pi_\C(\theta^*), x - \Pi_\C(x)} \ge 0$, which implies
\begin{align}
\norm{\Pi_\C(x) - \theta^*}^2 - \norm{\Pi_\C(\theta^*) - \theta^*}^2
&= - \norm{\Pi_\C(x) - \Pi_\C(\theta^*)}^2
+ 2 \inner{\Pi_\C(x) - \Pi_\C(\theta^*), \Pi_\C(x) - \theta^*}
\\
&\le - \norm{\Pi_\C(x) - \Pi_\C(\theta^*)}^2
+ 2 \inner{\Pi_\C(x) - \Pi_\C(\theta^*), x - \theta^*}
\\
&\le \norm{x - \theta^*}^2.
\end{align}
Combining this with \eqref{equation:pythagorean_inequality}, we see that for $Y = \theta^* + \sigma Z$ we have
\begin{equation}\label{equation:dom_conv}
0
\le \norm{\Pi_\C(Y) - \Pi_\C(\theta^*)}^2
\le \norm{\Pi_\C(Y) - \theta^*}^2 - \norm{\Pi_\C(\theta^*) - \theta^*}^2
\le \sigma^2 \norm{Z}^2.
\end{equation}

In the special case where $\C$ is a cone, the optimality condition
\eqref{equation:optimality condition} implies that $\Pi_\C(x)$ is the
unique vector in $\C$ satisfying
\begin{equation}\label{equation:optimality_condition_cone}
\inner{\Pi_\C(x), x - \Pi_\C(x)} = 0,
\quad \text{ and } \quad
\inner{z, x - \Pi_\C(x)} \le 0,
\quad \forall z \in \C.
\end{equation}

\section{Main theorem: low noise limit for polyhedra}\label{section:lb_polyhedron}

Our main result below provides a precise characterization of the low
$\sigma$ limits of the risks \eqref{equation:misspecified_risk} and
\eqref{equation:excess_risk} (normalized by $\sigma^2$) in the
misspecified setting (i.e.,  when $\theta^* \notin \C$) for polyhedral
$\C$. An implication of this result is that the low $\sigma$ limit can
be much smaller than the upper bound \eqref{equation:bellec_ub} of
\citet{bellec2015sharpshape}.

\begin{theorem}[Low noise limit of risk for polyhedra]
\label{theorem:lb_polyhedron}
Let $\C \subseteq \R^n$ be a closed convex set,
and let
$Y = \theta^* + \sigma Z$
where $\theta^* \in \R^n$ is not necessarily in $\C$,
and $Z$ is zero mean with $\E \norm{Z}^2 < \infty$.
Suppose the following ``locally polyhedral'' condition holds.
\begin{align}
\begin{split}\label{split:polyhedral_condition}
&T_\C(\Pi_\C(\theta^*))
\text{ is a polyhedral cone, and }\\
&T_\C(\Pi_\C(\theta^*)) \cap B_{r^*}(0)
= F_\C(\Pi_\C(\theta^*)) \cap B_{r^*}(0)
\text{ for some $r^*>0$.}
\end{split}
\end{align}
Then,
\begin{align}
\lim_{\sigma \downarrow 0} \frac{1}{\sigma^2} M(\hat{\theta},\theta^*)
= \lim_{\sigma \downarrow 0} \frac{1}{\sigma^2} E(\hat{\theta},\theta^*)
= \delta\parens*{
    T_\C(\Pi_\C(\theta^*)) \cap
    (\theta^*-\Pi_\C(\theta^*))^\perp
}.
\label{equation:lb_polyhedron_stat_dim}
\end{align}
\end{theorem}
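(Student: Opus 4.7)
Let $\theta_0 \coloneqq \Pi_\C(\theta^*)$, $v \coloneqq \theta^* - \theta_0$, $T \coloneqq T_\C(\theta_0)$, and $L \coloneqq T \cap v^\perp$. The plan is to reduce both $M$ and $E$ to the analysis of $\Pi_T(v + \sigma Z)$ via a localization argument, and then to identify the low-noise limit through a single convergence statement for projections onto the cone $T$.

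The first step is a localization. The optimality condition for $\Pi_\C(\theta^*)$ gives $\inner{u,v} \le 0$ for every $u \in F_\C(\theta_0)$, hence for every $u \in T$, so $v$ lies in the polar cone $T^\circ$. Non-expansiveness of $\Pi_\C$ yields $\norm{\Pi_\C(Y) - \theta_0} \le \norm{Y - \theta^*} = \sigma\norm{Z}$, and the same bound holds for $\Pi_{\theta_0 + T}(Y) - \theta_0 = \Pi_T(v + \sigma Z)$ (using non-expansiveness of $\Pi_T$ together with $\Pi_T(v) = 0$). So on the event $\{\sigma\norm{Z} < r^*\}$ both projections land in $B_{r^*}(\theta_0)$, where the locally polyhedral condition makes $\C$ coincide with $\theta_0 + T$. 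A short argument comparing the two minimization problems then gives
\begin{equation}
\Pi_\C(Y) - \theta_0 = \Pi_T(v + \sigma Z) = \sigma\,\Pi_T(Z + v/\sigma),
\end{equation}
where the last equality uses positive homogeneity of $T$. Off the good event I would still appeal to the crude bound $\norm{\Pi_\C(Y) - \theta_0} \le \sigma\norm{Z}$ for dominated-convergence purposes.

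The crux is then the cone-projection lemma: $\Pi_T(Z + \tau v) \to \Pi_L(Z)$ as $\tau \to \infty$. Setting $p_\tau \coloneqq \Pi_T(Z + \tau v)$ and $p^* \coloneqq \Pi_L(Z)$, comparing the optimality of $p_\tau$ against the feasible choice $p^* \in L \subseteq T$ and using $\inner{p^*, v} = 0$ gives
\begin{equation}
\norm{p_\tau - Z}^2 - 2\tau\inner{p_\tau, v} \le \norm{p^* - Z}^2.
\end{equation}
Since $\inner{p_\tau, v} \le 0$ (because $p_\tau \in T$ and $v \in T^\circ$), this inequality simultaneously shows that $\{p_\tau\}$ is bounded and that $0 \le -\tau\inner{p_\tau, v} \le \norm{Z}^2/2$, forcing $\inner{p_\tau, v} \to 0$. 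Any cluster point $p^\infty$ of $\{p_\tau\}$ therefore lies in $T \cap v^\perp = L$ and satisfies $\norm{p^\infty - Z} \le \norm{p^* - Z}$, so uniqueness of the projection onto $L$ gives $p^\infty = p^*$, whence $p_\tau \to p^*$. Feeding $\norm{p_\tau - Z}^2 \to \norm{p^* - Z}^2$ back into the displayed inequality upgrades this to $\tau\inner{p_\tau, v} \to 0$. It is worth remarking that this lemma does not actually use polyhedrality of $T$; polyhedrality enters only through the localization.

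To finish, I would run two dominated-convergence arguments. For the misspecified risk, $\sigma^{-2}\norm{\Pi_\C(Y) - \theta_0}^2 \to \norm{\Pi_L(Z)}^2$ pointwise on the good event (which has probability $1$ in the limit) and is bounded everywhere by $\norm{Z}^2$, so $\sigma^{-2} M(\hat\theta,\theta^*) \to \E\norm{\Pi_L(Z)}^2 = \delta(L)$. Expanding the square for the excess risk gives $E(\hat\theta,\theta^*) - M(\hat\theta,\theta^*) = -2\E\inner{\Pi_\C(Y) - \theta_0, v}$, which equals $-2\sigma^2\,\E[\tau\inner{p_\tau,v}]$ on the good event with $\tau = 1/\sigma$; since $\tau\inner{p_\tau,v}\to 0$ pointwise and is dominated by $\norm{Z}^2/2$, a second dominated-convergence step yields $\sigma^{-2}(E - M) \to 0$. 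I expect the main obstacle to be the careful bookkeeping in the localization (verifying that the two projections really do agree on the good event, and that the bad event's contribution to the $\sigma^{-2}$-normalized expectations vanishes), after which the cone-projection lemma carries the genuine analytic content.
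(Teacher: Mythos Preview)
Your proposal is correct and follows a genuinely different route from the paper. The paper's engine is a deterministic ``snap-to-face'' lemma: because $T$ is polyhedral, there is an $r>0$ such that $\Pi_\T(u)-\theta_0\in v^\perp$ \emph{exactly} for every $u\in B_r(\theta^*)$, proved by a finiteness argument over the faces of $T$. On the good event this gives $\Pi_\C(Y)-\theta_0=\sigma\,\Pi_L(Z)$ identically and turns the Pythagorean inequality into an equality, so that the excess and misspecified errors coincide pointwise there. You replace this exact identity by the asymptotic one $p_\tau=\Pi_T(Z+\tau v)\to\Pi_L(Z)$, obtained via a variational comparison that is valid for \emph{any} closed convex cone $T$, and then handle $E-M$ through the quantitative refinement $\tau\inner{p_\tau,v}\to 0$. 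The paper's approach yields a stronger intermediate fact of independent interest (it is reused in the isotonic discussion to argue that $\hat\theta(Y)$ lies exactly on a low-dimensional face with high probability). Your approach is more elementary, supplies the explicit dominating bound $0\le -\tau\inner{p_\tau,v}\le\norm{Z}^2/2$ for free, and---as you note---actually shows that the first half of the locally polyhedral condition (that $T$ be a polyhedral cone) is unnecessary: only the local agreement $T\cap B_{r^*}(0)=F_\C(\theta_0)\cap B_{r^*}(0)$ is used, in the localization step. Your localization is also slightly cleaner than the paper's, since the inclusion $\C\subseteq\theta_0+T$ together with $\Pi_{\theta_0+T}(Y)\in B_{r^*}(\theta_0)$ already forces the two projections to agree, without appealing separately to continuity of $\Pi_\C$.
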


Note again that $\delta(\cdot)$ denotes the generalized statistical dimension
induced by the noise $Z$,
and reduces to the usual statistical dimension \cite{amelunxen2014living}
when $Z \sim N(0, I_n)$.

We remark that the ``locally polyhedral'' condition
\eqref{split:polyhedral_condition} essentially states that $\C$ looks
like a polyhedron in a neighborhood around $\Pi_\C(\theta^*)$. As
established in the following lemma, it automatically holds if $\C$ is a
polyhedron, so one can
replace any mention of condition \eqref{split:polyhedral_condition}
with ``$\C$ is a polyhedron'' for the sake of readability. We provide
some remarks on the case when $\C$ is not polyhedral in
\autoref{subsection:generalizing}.

\begin{lemma}\label{lemma:drop_constraints}
Let $\C$ be a polyhedron.
Then the locally polyhedral condition \eqref{split:polyhedral_condition}
holds for any $\theta^* \in \R^n$.
\end{lemma}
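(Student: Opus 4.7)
The plan is to use the standard polyhedral description and localize at $\theta_0 \coloneqq \Pi_\C(\theta^*)$ via the active constraints. Write $\C = \{x \in \R^n : Ax \le b\}$ with rows $a_1,\ldots,a_m$ and define the active set $J \coloneqq \{j : \inner{a_j, \theta_0} = b_j\}$. I will first show that
\[
T_\C(\theta_0) = \{v \in \R^n : A_J v \le 0\},
\]
which immediately gives the first part of condition \eqref{split:polyhedral_condition} since the right-hand side is visibly a polyhedral cone.

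For the inclusion $\subseteq$, for any $\theta \in \C$, $\alpha \ge 0$, and $j \in J$, I would compute $\inner{a_j, \alpha(\theta-\theta_0)} = \alpha(\inner{a_j,\theta} - b_j) \le 0$, so the generating set in \eqref{tcd} lies in $\{v : A_J v \le 0\}$; since this latter set is closed, the closure is contained too. The reverse inclusion will follow from the localization argument for the second part, so I would set it up simultaneously.

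Next, I would pick
\[
r^* \coloneqq \min_{j \notin J} \frac{b_j - \inner{a_j, \theta_0}}{\|a_j\|},
\]
which is strictly positive (each numerator is positive by definition of $J$; the degenerate case $a_j = 0$ may be discarded, since then the constraint reduces to $0 \le b_j$, automatic because $\theta_0 \in \C$, and such a constraint would anyway be redundant). The inclusion $F_\C(\theta_0)\cap B_{r^*}(0) \subseteq T_\C(\theta_0)\cap B_{r^*}(0)$ is free since $F_\C(\theta_0) \subseteq T_\C(\theta_0)$ (take $\alpha=1$). For the reverse, given $v$ with $A_J v \le 0$ and $\|v\| \le r^*$, I would verify that $\theta_0 + v \in \C$: for $j \in J$, $\inner{a_j,\theta_0+v} = b_j + \inner{a_j,v} \le b_j$; for $j \notin J$, Cauchy--Schwarz gives $\inner{a_j, \theta_0+v} \le \inner{a_j,\theta_0} + \|a_j\|\,r^* \le b_j$ by the choice of $r^*$. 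This shows $v \in F_\C(\theta_0)$, establishing equality on $B_{r^*}(0)$, and also completes the $\supseteq$ direction for the tangent cone identification (scale any $v$ with $A_J v\le 0$ down into $B_{r^*}(0)$ to land in $F_\C(\theta_0) \subseteq T_\C(\theta_0)$, then scale back up using that $T_\C(\theta_0)$ is a cone).

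There is no real obstacle here — the argument is a routine unpacking of the polyhedral definition. The only mild care needed is in handling rows with $a_j = 0$ (which I dispose of by the non-redundancy observation above) and in confirming that the characterization $T_\C(\theta_0) = \{v : A_J v \le 0\}$ does not actually require the closure, which falls out of the localization step rather than needing a separate argument.
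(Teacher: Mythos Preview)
Your proposal is correct and follows essentially the same approach as the paper: identify $T_\C(\theta_0)=\{v:A_Jv\le 0\}$ via the active index set $J$, then use strict slackness of the inactive constraints to find $r^*$. Your treatment is in fact slightly tidier than the paper's --- you handle the closure in the definition of $T_\C(\theta_0)$ explicitly, and you give a concrete $r^*$ via Cauchy--Schwarz rather than an existential choice --- but the substance is the same. One trivial edge case you might mention: when $J^c=\varnothing$ the minimum defining $r^*$ is vacuous, and any $r^*>0$ works since then $F_\C(\theta_0)=\{v:Av\le 0\}=T_\C(\theta_0)$ directly.
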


Next, the following lemma establishes that the set
$T_\C(\Pi_\C(\theta^*)) \cap (\theta^*-\Pi_\C(\theta^*))^\perp$
that appears in the limit \eqref{equation:lb_polyhedron_stat_dim}
is a face of the tangent cone $T_\C(\Pi_\C(\theta^*))$.

\begin{lemma}\label{lemma:hyperplane_intersection_face}
Let $\theta^* \in \R^n$ and let $\C \subseteq \R^n$ be a closed convex
set satisfying the locally polyhedral condition
\eqref{split:polyhedral_condition}.
Let $A \in \R^{m \times n}$ be such that
$T_\C(\Pi_\C(\theta^*)) = \{u : Au \le 0\}$.
Then there exists some subset $J \subseteq \{1,\ldots,m\}$
such that
\begin{equation}
T_\C(\Pi_\C(\theta^*))
\cap (\theta^* - \Pi_\C(\theta^*))^\perp
= \{u : A_Ju=0, A_{J^c} u \le 0\}.
\end{equation}
Thus, $T_\C(\Pi_\C(\theta^*))
\cap (\theta^* - \Pi_\C(\theta^*))^\perp$
is a face of $T_\C(\Pi_\C(\theta^*))$.
\end{lemma}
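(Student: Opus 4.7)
\medskip

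\textbf{Proof plan.}
Write $p = \Pi_\C(\theta^*)$ and $v = \theta^* - p$ for brevity. The first step is to check that $v$ lies in the polar cone of $T := T_\C(p)$, i.e., $\langle u, v\rangle \le 0$ for every $u \in T$. From the projection optimality condition \eqref{equation:optimality condition}, every $w \in F_\C(p)$ satisfies $\langle w, v\rangle \le 0$. By the locally polyhedral condition \eqref{split:polyhedral_condition}, all sufficiently small elements of $T$ lie in $F_\C(p)$; since $T$ is a cone, an arbitrary $u \in T$ can be scaled down into $F_\C(p)$ and then scaled back, giving $\langle u, v\rangle \le 0$.

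Next, I invoke the standard polar cone description of a polyhedral cone: if $T = \{u : Au \le 0\}$ with rows $a_1,\ldots,a_m$, then
\begin{equation}
T^\circ = \brackets*{\text{cone}}\{a_1,\ldots,a_m\} = \braces*{\textstyle\sum_{j=1}^m \lambda_j a_j : \lambda_j \ge 0}.
\end{equation}
(This is a consequence of Farkas' lemma.) Combined with the previous step, $v = \sum_{j=1}^m \lambda_j a_j$ for some nonnegative $\lambda_1,\ldots,\lambda_m$. Define $J := \{j : \lambda_j > 0\}$.

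Now I check that the claimed face description holds. For any $u \in T$, each term $\lambda_j \langle u, a_j\rangle$ is nonpositive (since $\lambda_j \ge 0$ and $\langle u, a_j\rangle \le 0$). Hence $\langle u, v\rangle = \sum_{j \in J} \lambda_j \langle u, a_j\rangle = 0$ if and only if $\langle u, a_j\rangle = 0$ for every $j \in J$, i.e., $A_J u = 0$. Since $A_J u = 0$ automatically forces $A_J u \le 0$, this yields
\begin{equation}
T \cap v^\perp = \{u : Au \le 0,\; A_J u = 0\} = \{u : A_J u = 0,\; A_{J^c} u \le 0\},
\end{equation}
which is precisely a face of $T$ obtained by promoting the constraints indexed by $J$ to equalities.

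The argument is almost entirely bookkeeping once the polar cone identity is in hand; the one conceptual step is recognizing that the locally polyhedral assumption is needed to transfer the projection inequality from $F_\C(p)$ to all of $T$ (without it, $T$ could fail to be polyhedral and the decomposition $v = \sum \lambda_j a_j$ would not even be meaningful). No other obstacle is anticipated.
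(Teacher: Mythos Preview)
Your argument is correct, but it proceeds quite differently from the paper's own proof. The paper first establishes a general device (\autoref{lemma:largest_face}) that, for a polyhedral cone $\K = \{u : Au \le 0\}$ and any $y$, identifies the \emph{minimal} index set $J_y$ for which $\{u : A_{J_y} u = 0\} \subseteq (y - \Pi_\K(y))^\perp$, and shows this $J_y$ cuts out the face $\K \cap (y - \Pi_\K(y))^\perp$. The present lemma is then a one-line application after checking that $\Pi_{T}(\theta^* - \Pi_\C(\theta^*)) = 0$. Your route instead passes through the polar-cone/Farkas description $T^\circ = \op{cone}\{a_1,\ldots,a_m\}$, writes $v = \sum_j \lambda_j a_j$, and takes $J$ to be the support of this conic combination; the face identity then drops out from the observation that $\langle u, v\rangle$ is a sum of nonpositive terms.

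Both approaches hinge on the same initial step---using the locally polyhedral condition to push the projection inequality $\langle w, v\rangle \le 0$ from $F_\C(p)$ to all of $T$---but diverge afterward. Your Farkas argument is shorter and more self-contained for this single lemma, and it makes the ``face'' structure transparent. The paper's route is slightly more abstract but yields a canonical (minimal) $J$, and, more importantly, its auxiliary \autoref{lemma:largest_face} is reused verbatim in the proof of the key \autoref{lemma:project_hyperplane}, so it earns its keep as shared infrastructure. A minor caveat: your $J$ depends on the chosen Farkas decomposition of $v$ and need not be unique (or minimal) when the rows $a_j$ are linearly dependent, whereas the paper's $J_y$ is unique by construction; this does not affect correctness here since the lemma only asserts existence.
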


Both the above lemmas are proved in
\autoref{section:lemmas_lb_polyhedron}.

If $\theta^* \in \C$ then we have $\Pi_\C(\theta^*)=\theta^*$,
and \autoref{theorem:lb_polyhedron} reduces to the result
\eqref{equation:oymakhassibi} of \citet{oymak2013sharp}: the excess
risk and the misspecified risk become the same, and the common limit
is the statistical dimension of $T_\C(\theta^*)$. We must remark here
that the result of \citet{oymak2013sharp} holds for non-polyhedral
$\C$ as well. We discuss the non-polyhedral setting further in
\autoref{subsection:generalizing}.

\autoref{theorem:lb_polyhedron} states that in the misspecified
case $\theta^* \notin \C$, the low sigma limit still involves the
tangent cone $T_\C(\Pi_\C(\theta^*))$, but one needs to intersect it with the
hyperplane $(\theta^*-\Pi_\C(\theta^*))^\perp$ before taking the statistical
dimension. Due to the optimality condition \eqref{equation:optimality condition}
characterizing $\Pi_\C$,
the tangent cone lies entirely
on one side of the hyperplane, so the hyperplane does not intersect the
interior of the tangent cone.
Therefore, the interior of the tangent cone $T_{\C}(\Pi_{\C}
(\theta^*))$ does not contribute
to the low $\sigma$ limit of the risk under misspecification. This makes
sense because when $\theta^* \notin \C$ and
$\sigma$ is small, the observation vector $Y$ is outside $\C$
with high probability so that $\hat{\theta}(Y)$ lies on the
boundary of $\C$.

In general, the intersection $T_\C(\Pi_\C(\theta^*)) \cap (\theta^* -
\Pi_\C(\theta^*))^\perp$  can be anything from $\{0\}$ to the full
tangent cone $T_\C(\Pi_\C(\theta^*))$ and so the low sigma limit can
be anything between $0$ and $\delta(T_\C(\Pi_\C(\theta^*)))$.  The
case when the limit equals zero corresponds to the situation where
$\theta^*$ lies in the interior of the preimage of $\Pi_\C(\theta^*)$
under the map $\Pi_\C$ so that every point in some neighborhood of
$\theta^*$ is projected onto the same point $\Pi_\C(\theta^*)$ (see
\autoref{subfigure:orthant_zero} for an example).

The following lemma
(proved in \autoref{section:lemmas_lb_polyhedron}),
provides mild conditions under which the intersection
$T_\C(\Pi_\C(\theta^*)) \cap (\theta^* - \Pi_\C(\theta^*))^\perp$
has strictly smaller generalized statistical dimension
than the full tangent cone $T_\C(\Pi_\C(\theta^*))$.

\begin{lemma}\label{lemma:jump}
Let $\C \subseteq \R^n$ be a polyhedron with nonempty interior.
Then
\begin{equation}
\sup_{\theta^* \notin \C : \Pi_\C(\theta^*)=\theta_0}
\delta\parens*{T_\C(\Pi_\C(\theta^*)) \cap (\theta^* - \Pi_\C(\theta^*))^\perp}
< \delta(T_\C(\theta_0)).
\end{equation}
for every $\theta_0 \in \C$,
provided the random vector $Z$
has nonzero probability of lying in the interior of
$T_\C(\theta_0)$.
\end{lemma}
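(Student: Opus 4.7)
The plan is to reduce the supremum to a maximum over the finite collection of proper faces of the polyhedral cone $T \coloneqq T_\C(\theta_0)$, and then to show that each proper face $F$ of $T$ satisfies $\delta(F) < \delta(T)$ strictly.

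For the reduction, first note that $T$ is polyhedral by \autoref{lemma:drop_constraints}, and has nonempty interior: since $\C$ has nonempty interior, any $c \in \op{int}(\C)$ gives $B_r(c - \theta_0) \subseteq F_\C(\theta_0) \subseteq T$ for small $r > 0$. Now fix any $\theta^* \notin \C$ with $\Pi_\C(\theta^*) = \theta_0$, and set $v \coloneqq \theta^* - \theta_0 \neq 0$. The optimality condition \eqref{equation:optimality condition} gives $\inner{v, u} \leq 0$ for every $u \in T$, so $v \in T^\circ \setminus \{0\}$. By \autoref{lemma:hyperplane_intersection_face}, $T \cap v^\perp$ is a face of $T$; moreover it is a \emph{proper} face, since otherwise $T \subseteq v^\perp$, which would force $v = 0$ as $T$ is full-dimensional. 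Because $T$ has only finitely many faces, we conclude
\begin{equation}
\sup_{\theta^* \notin \C : \Pi_\C(\theta^*) = \theta_0}
\delta\parens*{T \cap (\theta^* - \theta_0)^\perp}
\leq \max\braces*{\delta(F) : F \text{ is a proper face of } T}.
\end{equation}

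For the strict-gap step, fix a proper face $F$ of $T$. Applying the orthogonality relation \eqref{equation:optimality_condition_cone} for projections onto closed convex cones, we have $\norm{\Pi_K(Z)}^2 = \norm{Z}^2 - \norm{Z - \Pi_K(Z)}^2$ for any such cone $K$, and hence
\begin{equation}
\delta(T) - \delta(F)
= \E\brackets*{\norm{Z - \Pi_F(Z)}^2 - \norm{Z - \Pi_T(Z)}^2}.
\end{equation}
The integrand is pointwise nonnegative because $F \subseteq T$. On the event $\{Z \in \op{int}(T)\}$, we have $\Pi_T(Z) = Z$, whereas $Z \notin F$ since a proper face of the full-dimensional polyhedral cone $T$ lies inside a supporting hyperplane and therefore avoids $\op{int}(T)$. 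Thus the integrand is strictly positive on this event, which by hypothesis has positive probability, so $\delta(T) - \delta(F) > 0$. Since the maximum is taken over finitely many proper faces, the strict inequality is preserved.

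The main obstacle is this strict-gap step: one cannot reduce it to a plain dimension argument, because the generalized statistical dimension depends jointly on the cone and the law of $Z$, and could a priori coincide for $F$ and $T$ if $Z$ placed no mass near the interior of $T$. The positive-probability hypothesis on $\{Z \in \op{int}(T)\}$ supplies precisely the coupling between the geometry of $T$ and the distribution of $Z$ that is needed to force the gap.
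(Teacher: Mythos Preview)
Your proof is correct and follows essentially the same approach as the paper: reduce the supremum to a finite maximum over proper faces of the full-dimensional polyhedral cone $T_\C(\theta_0)$ via \autoref{lemma:hyperplane_intersection_face}, then show each proper face has strictly smaller generalized statistical dimension by exhibiting a positive-probability event on which the gap is strict. Your implementation of the strict-gap step is slightly more streamlined than the paper's: you use the cone identity $\norm{\Pi_K(Z)}^2 = \norm{Z}^2 - \norm{Z - \Pi_K(Z)}^2$ to rewrite $\delta(T)-\delta(F)$ as an expectation of the nonnegative quantity $d(Z,F)^2 - d(Z,T)^2$ and observe this is strictly positive on $\op{int}(T)$, whereas the paper reaches the same pointwise inequality via the Moreau decomposition and an auxiliary subspace $U$ containing $F$.
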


As mentioned already, \autoref{lemma:jump}
combined with the main result \autoref{theorem:lb_polyhedron}
implies the risk gap \eqref{equation:risk_gap}.
In summary, under the nonempty interior assumption,
if we think of the low $\sigma$ limit as a function of $\theta^*$, we
see that as $\theta^*$ approaches $\C$ from the outside there is a
``jump'' when $\theta^*$ enters $\C$. This ``jump'' phenomenon is not
unique to the polyhedral case. In \autoref{subsection:generalizing}
we discuss a non-polyhedral example that also exhibits this jump
phenomenon.

\autoref{theorem:lb_polyhedron} suggests something that may seem
nonintuitive: if $\theta^* \notin \C$ and we use the estimator
$\hat{\theta}(Y) = \Pi_\C(Y)$, the risk when $Y = \theta^* + \sigma Z$
is smaller than the risk when $Y = \Pi_\C(\theta^*) + \sigma Z$.
As mentioned already,
in the case $Y = \theta^* + \sigma Z$ the estimator is actually estimating $\Pi_\C(\theta^*)$,
not $\theta^*$.
Moreover,
the risks \eqref{equation:misspecified_risk} and \eqref{equation:excess_risk}
measure error relative to $\Pi_\C(\theta^*)$ rather than to $\theta^*$.
Furthermore, the intuition is that in the low $\sigma$ limit, the
estimator $\hat{\theta}(Y)$ in the misspecified setting is a
projection onto a much smaller set than in the well-specified setting
(essentially, a face of a tangent cone instead of the full tangent
cone), so more of the original noise in $Y$ is eliminated.
This qualitatively explains why having $Y$ generated from $\theta^*$
outside $\C$ allows the estimator to estimate $\Pi_\C(\theta^*)$
better than if $Y$ were generated from $\Pi_\C(\theta^*)$ instead.

Finally, we observe that in the misspecified setting, there is a gap
between Bellec's upper bound $\delta(T_{\C}(\Pi_{\C}(\theta^*)))$
\eqref{equation:bellec_ub}
and the low
$\sigma$ risk limit, unlike in the well-specified setting where
the result \eqref{equation:oymakhassibi} implies that the normalized risk
increases to the upper bound in the low $\sigma$ limit. The upper
bound, which is constant in $\sigma$, can become very loose as $\sigma
\downarrow 0$. However, in \autoref{subsection:max_risk} we shown a
few examples where the normalized risk is close to the upper bound for
some $\sigma$, as well as examples where the normalized risk remains
much smaller than the upper bound for all $\sigma>0$.

\subsection{Proof of \autoref{theorem:lb_polyhedron}}
\label{subsection:proof_lb_polyhedron}

We establish one key lemma (proved in \autoref{section:lemmas_lb_polyhedron})
before proving \autoref{theorem:lb_polyhedron}.
It is a deterministic result that contains the core of the argument:
roughly, if we have a polyhedral cone $\T$
and any $\theta^* \in \R^n$ satisfying
$\Pi_\T(\theta^*) = 0$,
then any point $u$ sufficiently near $\theta^*$ will have its projection $\Pi_\T(u)$
lying in the hyperplane with normal direction $\theta^*$.

\begin{lemma}[Key lemma]\label{lemma:project_hyperplane}
Fix $\theta^* \in \R^n$,
and let $\T$ be a closed convex set such that the re-centered set
$\{\theta - \Pi_\T(\theta^*) : \theta \in \T\}$
is a polyhedral cone.
Then there exists $r>0$ such that
\begin{equation}\label{equation:project_hyperplane}
    \Pi_\T(u) - \Pi_\T(\theta^*) \in (\theta^* - \Pi_\T(\theta^*))^\perp,
    \qquad \forall u \in B_r(\theta^*).
\end{equation}
\end{lemma}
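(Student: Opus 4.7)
First I would translate so that $\Pi_\T(\theta^*) = 0$; then $\T$ becomes a polyhedral cone and, by the cone optimality condition \eqref{equation:optimality_condition_cone}, $\theta^* \in \T^\circ$. The goal reduces to finding $r>0$ such that $\inner{\Pi_\T(u), \theta^*} = 0$ for every $u \in B_r(\theta^*)$.

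The main idea is to exploit the piecewise-linear structure of projection onto a polyhedral cone. For each face $F$ of $\T$, let $N_F$ denote the (constant) normal cone of $\T$ along $\op{ri}(F)$, let $W_F := \op{span}(F)$, and set $C_F := F + N_F$. These cells are polyhedral (hence closed) and cover $\R^n$: any $u \in \R^n$ writes as $u = \Pi_\T(u) + (u - \Pi_\T(u))$ with $\Pi_\T(u) \in \op{ri}(F)$ for some unique face $F$ and $u - \Pi_\T(u) \in N_F$. The crux is then the identification $\Pi_\T|_{C_F} = \Pi_{W_F}$, the orthogonal projection onto the linear subspace $W_F$: this rests on two standard facts, namely $N_F \subseteq W_F^\perp$ (pick $p \in \op{ri}(F)$; for any $y \in W_F$ we have $p \pm \epsilon y \in \T$ for small $\epsilon>0$, which forces $\inner{q,y}=0$ for all $q \in N_F$), and the fact that if $u = p + q$ with $p \in F$, $q \in N_F$, then $p = \Pi_\T(u)$ (because $q$ lies in the normal cone at $p$, which is $N_{F'} \supseteq N_F$ for the subface $F' \subseteq F$ containing $p$ in its relative interior). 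Combined, the decomposition on $C_F$ is the orthogonal decomposition with respect to $W_F \oplus W_F^\perp$, giving the claim.

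To finish, shrink $r$ so that $B_r(\theta^*) \subseteq \bigcup_{F : \theta^* \in C_F} C_F$; this is possible because there are finitely many cells and each $C_F$ not containing $\theta^*$ is closed and thus at strictly positive distance from $\theta^*$. For any face $F$ with $\theta^* \in C_F$, the identification $\Pi_\T = \Pi_{W_F}$ on $C_F$ together with continuity forces $\Pi_{W_F}(\theta^*) = \Pi_\T(\theta^*) = 0$. Since $\Pi_{W_F}$ is a self-adjoint orthogonal projection onto the subspace $W_F$, we have $\ker \Pi_{W_F} = W_F^\perp$, so $\theta^* \in W_F^\perp$. Therefore $\Pi_\T(u) = \Pi_{W_F}(u) \in W_F$ is orthogonal to $\theta^*$ for every $u \in C_F$, which proves the claim on all of $B_r(\theta^*)$.

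The main obstacle is nailing down the local-linearity identification $\Pi_\T|_{C_F} = \Pi_{W_F}$; once this is in hand, self-adjointness of an orthogonal projection onto a subspace immediately converts ``$\Pi_{W_F}$ kills $\theta^*$'' into ``the image of $\Pi_{W_F}$ is orthogonal to $\theta^*$,'' which is really the crux. An alternative plan would work directly with the KKT conditions for $\Pi_\T$, writing $\theta^*$ via Carath\'eodory as a positive combination of a linearly independent subset of the normal vectors defining $\T$ and propagating positivity of the multipliers to a neighborhood of $\theta^*$; this route also succeeds but requires extra care to handle possible non-uniqueness of multipliers.
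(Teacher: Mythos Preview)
Your proof is correct and follows a genuinely different route from the paper's. Both begin by translating so that $\Pi_\T(\theta^*)=0$ and $\T$ is a polyhedral cone, but the paper does not invoke the full cell decomposition $\R^n=\bigcup_F(F+N_F)$ or the piecewise-linear identification $\Pi_\T|_{C_F}=\Pi_{W_F}$. Instead it introduces, via a separate technical lemma, for each $y$ the smallest index set $J_y\subseteq\{1,\dots,m\}$ with $\{u:A_{J_y}u=0\}\subseteq(y-\Pi_\T(y))^\perp$, shows $\Pi_\T(y)$ always lies in this subspace, and then argues by contradiction (a sequence $y_k\to\theta^*$, passage to a constant subsequence of $J_{y_k}$'s by finiteness, and continuity of $\Pi_\T$) that for $y$ near $\theta^*$ this subspace lies in $(\theta^*)^\perp$. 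Your approach is more structural and yields an explicit radius (the minimum distance from $\theta^*$ to the foreign cells); the step where self-adjointness of $\Pi_{W_F}$ converts $\Pi_{W_F}(\theta^*)=0$ into $W_F\subseteq(\theta^*)^\perp$ makes the geometry especially transparent. The paper's route has the advantage that its ``largest face in a hyperplane'' lemma is reused elsewhere (to show that $T_\C(\Pi_\C(\theta^*))\cap(\theta^*-\Pi_\C(\theta^*))^\perp$ is itself a face of the tangent cone), so the technical investment is amortized. One minor remark: your invocation of continuity to conclude $\Pi_{W_F}(\theta^*)=0$ is unnecessary, since $\theta^*\in C_F$ gives this directly from the identification.
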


With this lemma,
along with some standard results collected in \autoref{section:basic_properties},
we can proceed with proving \autoref{theorem:lb_polyhedron}.
\begin{proof}[Proof of \autoref{theorem:lb_polyhedron}]
We first prove
\begin{equation}\label{equation:proof_part1}
\lim_{\sigma \downarrow 0} \frac{1}{\sigma^2} M(\hat{\theta}, \theta^*)
= \delta\parens*{
    T_\C(\Pi_\C(\theta^*)) \cap (\theta^* - \Pi_\C(\theta^*))^\perp
}.
\end{equation}
For any $r>0$ we can write
\begin{equation}\label{equation:tail}
\frac{1}{\sigma^2} M(\hat{\theta}, \theta^*)
= \frac{1}{\sigma^2} \E_{\theta^*} \brackets*{
    \norm{\Pi_\C(Y) - \Pi_\C(\theta^*)}^2
    \b{1}_{\{Y \in B_r(\theta^*)\}}
}
+
\frac{1}{\sigma^2} \E_{\theta^*}\brackets*{
    \norm{\Pi_\C(Y) - \Pi_\C(\theta^*)}^2
    \b{1}_{\{Y \notin B_r(\theta^*)\}}
}.
\end{equation}
We claim the second term on the right-hand side vanishes as $\sigma \downarrow 0$ (regardless of the value of $r>0$).
Since the projection $\Pi_\C$ is non-expansive \cite{hiriart2012fundamentals},
\begin{equation}
0 \le
\frac{1}{\sigma^2} \E_{\theta^*}\brackets*{
    \norm{\Pi_\C(Y) - \Pi_\C(\theta^*)}^2
    \b{1}_{\{Y \notin B_r(\theta^*)\}}
}
\le \frac{1}{\sigma^2} \E_{\theta^*}\brackets*{
    \norm{Y - \theta^*}^2
    \b{1}_{\{Y \notin B_r(\theta^*)\}}
}
= \E_{\theta^*} \brackets*{\norm{Z}^2 \b{1}_{\{\sigma\norm{Z} > r\}}}.
\end{equation}
Then, the dominated convergence theorem implies the right-hand side tends to zero as $\sigma \downarrow 0$, because $\E \norm{Z}^2 < \infty$ and the random variable $\norm{Z}^2 \b{1}_{\{\sigma\norm{Z} > r\}}$ converges to zero pointwise.

Thus, it remains to show
\begin{equation}\label{equation:main_term}
\lim_{\sigma \downarrow 0}
\frac{1}{\sigma^2} \E_{\theta^*} \brackets*{
    \norm{\Pi_\C(Y) - \Pi_\C(\theta^*)}^2
    \b{1}_{\{Y \in B_r(\theta^*)\}}
}
=
\delta\parens*{
    T_\C(\Pi_\C(\theta^*)) \cap (\theta^* - \Pi_\C(\theta^*))^\perp
}
\end{equation}
for some $r>0$.

We define the re-centered tangent cone
\begin{equation}
\T \coloneqq \braces*{\Pi_\C(\theta^*) + u : u \in T_\C(\Pi_\C(\theta^*))}.
\end{equation}
We claim there exists some $r>0$ such that
\begin{equation}\label{equation:project_onto_cone}
\Pi_\C(u) = \Pi_\T(u),
\qquad \forall u \in B_r(\theta^*).
\end{equation}
Indeed, note that the locally polyhedral condition \eqref{split:polyhedral_condition}
implies the existence of some $r^*>0$ such that
\begin{equation}\label{equation:polyhedron_condition_shift}
\C \cap B_{r^*}(\Pi_\C(\theta^*)) = \T \cap B_{r^*}(\Pi_\C(\theta^*))
\end{equation}
Since both projections $\Pi_\C$ and $\Pi_\T$ are continuous \cite{hiriart2012fundamentals} at $\theta^*$,
there exists some $r>0$ such that the image of $B_r(\theta^*)$ under both projections
lies in $B_{r^*}(\Pi_\C(\theta^*))$.
Thus the local equality \eqref{equation:project_onto_cone} of the projections
follows from the locally polyhedral condition
\eqref{equation:polyhedron_condition_shift}.

By combining this argument with \autoref{lemma:project_hyperplane},
we have shown there exists some $r > 0$
that satisfies not only \eqref{equation:project_onto_cone},
but also \eqref{equation:project_hyperplane}.
With this value of $r$, the equality \eqref{equation:project_onto_cone} implies that replacing each instance of $\C$ with $\T$ in \eqref{equation:main_term}
does not change either side, since $\Pi_\C(Y)=\Pi_\T(Y)$, $\Pi_\C(\theta^*)=\Pi_\T(\theta^*)$,
and
\begin{equation}
T_\C(\Pi_\C(\theta^*))
= T_{\C \cap B_{r^*}(\Pi_\C(\theta^*))}(\Pi_\C(\theta^*))
= T_{\T \cap B_{r^*}(\Pi_\C(\theta^*))}(\Pi_\C(\theta^*))
= T_{\T}(\Pi_\T(\theta^*)),
\end{equation}
by the equality \eqref{equation:polyhedron_condition_shift}
and the definition of the tangent cone.
Thus it remains to prove
\begin{equation}\label{equation:main_term2}
\lim_{\sigma \downarrow 0}
\frac{1}{\sigma^2} \E_{\theta^*} \brackets*{
    \norm{\Pi_\T(Y) - \Pi_\T(\theta^*)}^2
    \b{1}_{\{Y \in B_r(\theta^*)\}}
}
=
\delta(\K),
\end{equation}
where $\K \coloneqq T_\T(\theta^*) \cap (\theta^* - \Pi_\T(\theta^*))^\perp$.

Since $r$ satisfies \eqref{equation:project_hyperplane}, some re-centering yields
\begin{equation}\label{equation:projection_trick}
\Pi_\T(Y) - \Pi_\T(\theta^*)
= \Pi_{T_\T(\theta^*)}(Y-\Pi_\T(\theta^*))
= \Pi_\K(Y - \Pi_\T(\theta^*))
\end{equation}
in the event $\{Y \in B_r(\theta^*)\}$.

For
$W \coloneqq (\theta^* - \Pi_\T(\theta^*))^\perp$, we claim
\begin{equation}
\Pi_\K = \Pi_\K \circ \Pi_W.
\end{equation}
In fact this holds for any subspace $W$ and closed convex $\K \subseteq W$,
by the Pythagorean theorem:
\begin{equation}
\Pi_\K(x)
= \argmin_{u \in \K} \norm{x - u}^2
= \argmin_{u \in \K} \braces*{
    \norm{x - \Pi_W(x)}^2 + \norm{\Pi_W(x) - u}^2
}
= \Pi_\K (\Pi_W(x)).
\end{equation}
Applying this to \eqref{equation:projection_trick} yields
\begin{align}
\Pi_\T(Y) - \Pi_\T(\theta^*)
&= \Pi_\K(Y - \Pi_\T(\theta^*))
\\
&= \Pi_\K (\Pi_W(\theta^* + \sigma Z - \Pi_\T(\theta^*)))
\\
&= \Pi_\K(\Pi_W(\sigma Z))
& \text{$\Pi_W$ is linear, $\Pi_W(\theta^* - \Pi_\T(\theta^*)) = 0$}
\\
&= \Pi_\K(\sigma Z) = \sigma \Pi_\K(Z)
& \text{$\K$ is a cone}
\end{align}
in the event $\{Y \in B_r(\theta^*)\}$.
By plugging this into the left-hand side of equation \eqref{equation:main_term2},
we have
\begin{equation}
\lim_{\sigma \downarrow 0}
\E_{\theta^*} \brackets*{
    \norm{\Pi_\K(Z)}^2
    \b{1}_{\{Y \in B_r(\theta^*)\}}
}
= \E \norm{\Pi_\K(Z)}^2
= \delta(\K),
\end{equation}
where the first equality follows by dominated convergence
($\norm{\Pi_\K(Z)}^2 \le \norm{Z}^2$ and $\E \norm{Z}^2 < \infty$).
This verifies the desired equality \eqref{equation:main_term2} and
concludes the proof of the first low $\sigma$ limit \eqref{equation:proof_part1}.

We now prove the other equality
\begin{equation}
\lim_{\sigma \downarrow 0}
\frac{1}{\sigma^2} M(\hat{\theta}, \theta^*)
=
\lim_{\sigma \downarrow 0}
\frac{1}{\sigma^2} E(\hat{\theta}, \theta^*).
\end{equation}
We claim
\begin{equation}\label{equation:excess_tail}
\lim_{\sigma \downarrow 0}
\frac{1}{\sigma^2} \E_{\theta^*}
\brackets*{
    \parens*{\norm{\Pi_\C(Y) - \theta^*}^2 - \norm{\Pi_\C(\theta^*) - \theta^*}^2}
    \b{1}_{\{Y \notin B_r(\theta^*)\}}
}
= 0
\end{equation}
for any $r>0$.
Applying some basic properties \eqref{equation:dom_conv} of the projection $\Pi_\C$ yields
\begin{equation}
0 \le \frac{1}{\sigma^2} \E_{\theta^*} \brackets*{
    \parens*{\norm{\Pi_\C(Y) - \theta^*}^2 - \norm{\Pi_\C(\theta^*) - \theta^*}^2}
    \b{1}_{\{Y \notin B_r(\theta^*)\}}
}
\le \E \brackets*{\norm{Z}^2 \b{1}_{\{\sigma \norm{Z} \ge r\}}},
\end{equation}
so applying the dominated convergence theorem as before leads to the limit \eqref{equation:excess_tail}.

Thus, it suffices to prove
\begin{align}
&\lim_{\sigma \downarrow 0}
\frac{1}{\sigma^2} \E_{\theta^*}
\brackets*{
    \norm{\Pi_\C(Y) - \Pi_\C(\theta^*)}^2
    \b{1}_{\{Y \in B_r(\theta^*)\}}
}
\\
&=
\lim_{\sigma \downarrow 0}
\frac{1}{\sigma^2} \E_{\theta^*}
\brackets*{
    \parens*{\norm{\Pi_\C(Y) - \theta^*}^2 - \norm{\Pi_\C(\theta^*) - \theta^*}^2}
    \b{1}_{\{Y \in B_r(\theta^*)\}}
}
\label{align:main_term_excess}
\end{align}
for some $r > 0$.
We choose $r$ as before so that \eqref{equation:project_hyperplane} and
\eqref{equation:project_onto_cone} both hold.
By the same reasoning as before, we can replace each instance of $\C$ with $\T$ without changing anything.
Furthermore, the condition \eqref{equation:project_hyperplane} implies we have $\inner{\Pi_\T(Y) - \Pi_\T(\theta^*), \theta^* - \Pi_\T(\theta^*)} = 0$ in the event $\{Y \in B_r(\theta^*)\}$,
so the Pythagorean inequality
\eqref{equation:pythagorean_inequality}
becomes equality:
\begin{equation}
\norm{\Pi_\T(Y) - \Pi_\T(\theta^*)}^2 \b{1}_{\{Y \in B_r(\theta^*)\}}
= \parens*{\norm{\Pi_\T(Y) - \theta^*}^2 - \norm{\Pi_\T(\theta^*) - \theta^*}^2}
\b{1}_{\{Y \in B_r(\theta^*)\}}.
\end{equation}
Therefore the equality \eqref{align:main_term_excess} holds, which concludes the proof of \autoref{theorem:lb_polyhedron}.
\end{proof}

\section{Examples}\label{section:examples}

In this section, we assume the Gaussian noise model $Z \sim N(0,
I_n)$, or equivalently $Y \sim N(\theta^*, \sigma^2 I_n)$.
Thus, $\delta(\cdot)$ denotes the usual statistical dimension
\cite{amelunxen2014living}, where $Z$ in the definition \eqref{equation:stat_dim}
is a standard Gaussian vector.

\subsection{Nonnegative orthant}

We now apply \autoref{theorem:lb_polyhedron}
to the \textit{nonnegative orthant}
$\R^n_+ \coloneqq \{u \in \R^n : u_i \ge 0, \forall i\}$.
In \autoref{figure:nonnegative_orthant}
we provide visualizations of the geometry of the main theorem
when applied to this constraint set.

\begin{corollary}[Nonnegative orthant]\label{corollary:orthant}
Let $Y \sim N(\theta^*, \sigma^2 I)$ where $\theta^* \in \R^n$.
Let $n_+ \coloneqq \sum_{i=1}^n \b{1}_{\{\theta^*_i > 0\}}$ and
$n_0 \coloneqq \sum_{i=1}^n \b{1}_{\{\theta^*_i=0\}}$
denote the number of positive components
and number of zero components of $\theta^*$ respectively.
Then the normalized excess risk \eqref{equation:excess_risk}
and normalized mispecified risk \eqref{equation:misspecified_risk}
of the least squares estimator $\hat{\theta}(Y)\coloneqq \Pi_{\R^n_+}(Y)$
with respect to $\R_+^n$
both tend to
\begin{equation}
    \frac{n_0}{2} + n_+
\end{equation}
as $\sigma \downarrow 0$.
\end{corollary}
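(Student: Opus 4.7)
The plan is to compute each ingredient in the formula \eqref{equation:lb_polyhedron_stat_dim} of \autoref{theorem:lb_polyhedron} explicitly for $\C = \R^n_+$, then combine. Since $\R^n_+$ is a polyhedron, the locally polyhedral condition holds by \autoref{lemma:drop_constraints}, so the theorem applies. It is convenient to partition the coordinates according to the sign pattern of $\theta^*$:
\begin{align}
I_+ = \{i : \theta^*_i > 0\}, \qquad I_0 = \{i : \theta^*_i = 0\}, \qquad I_- = \{i : \theta^*_i < 0\},
\end{align}
with $|I_+| = n_+$ and $|I_0| = n_0$. Then $\Pi_{\R^n_+}(\theta^*)$ is coordinatewise $\max(\theta^*_i, 0)$, which is strictly positive on $I_+$ and zero on $I_0 \cup I_-$. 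In particular, $(\theta^* - \Pi_{\R^n_+}(\theta^*))_i$ is zero on $I_+ \cup I_0$ and equals $\theta^*_i < 0$ on $I_-$.

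Next I would write down the tangent cone at $\theta_0 \coloneqq \Pi_{\R^n_+}(\theta^*)$. The active inequalities are exactly those indexed by $I_0 \cup I_-$ (where $(\theta_0)_i = 0$), so
\begin{align}
T_{\R^n_+}(\theta_0) = \{u \in \R^n : u_i \ge 0 \text{ for all } i \in I_0 \cup I_-\},
\end{align}
with no constraint on the coordinates in $I_+$. The hyperplane $(\theta^* - \theta_0)^\perp$ is $\{u : \sum_{i \in I_-} \theta^*_i u_i = 0\}$. The key observation is that for any $u \in T_{\R^n_+}(\theta_0)$, every term $\theta^*_i u_i$ with $i \in I_-$ is a product of a negative number and a nonnegative number, hence nonpositive; the sum can therefore vanish only if each term vanishes, i.e.\ $u_i = 0$ for all $i \in I_-$. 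Consequently
\begin{align}
T_{\R^n_+}(\theta_0) \cap (\theta^* - \theta_0)^\perp = \R^{I_+} \times \R_+^{I_0} \times \{0\}^{I_-},
\end{align}
after an obvious identification of coordinates.

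To finish, I would use that the statistical dimension is additive over orthogonal product decompositions of cones under a standard Gaussian $Z$, since the projection decouples coordinatewise. The three factors contribute respectively $n_+$ (the statistical dimension of a linear subspace equals its dimension), $n_0/2$ (since $\E \max(Z_i,0)^2 = 1/2$ for a standard Gaussian, summed over $I_0$), and $0$. The sum is $n_+ + n_0/2$, which matches the claim. No step is really a serious obstacle here; the only point that requires a moment of care is the argument that the hyperplane condition forces $u_i = 0$ on $I_-$, which relies crucially on the sign structure of $\theta^*$ on $I_-$ combined with the nonnegativity from the tangent cone.
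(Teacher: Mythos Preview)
Your proposal is correct and follows essentially the same route as the paper: invoke \autoref{theorem:lb_polyhedron}, compute the tangent cone at $\Pi_{\R^n_+}(\theta^*)$, intersect with the hyperplane $(\theta^*-\Pi_{\R^n_+}(\theta^*))^\perp$, identify the result as $\R^{n_+}\times\R_+^{n_0}\times\{0\}^{n-n_+-n_0}$, and use additivity of the statistical dimension over products. Your argument that the hyperplane condition forces $u_i=0$ on $I_-$ is slightly more explicit than the paper's, but the logic is identical.
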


\begin{proof}
By \autoref{theorem:lb_polyhedron},
it suffices to prove that the statistical dimension term in
\eqref{equation:lb_polyhedron_stat_dim} is $\frac{n_0}{2} + n_+$. Note that for $y \in \R^n$,
$\Pi_{\R^n_+}(y) = \max\{y,0\}$ is obtained
by taking the component-wise maximum of $y$ with $0$.
Consequently,
\begin{equation}
T_{\R^n_+}(\Pi_{\R^n_+}(\theta^*))
= \{u \in \R^n: u_i \ge 0 \text{ if } (\Pi_{\R^n_+}(\theta^*))_i=0\}
= \{u \in \R^n: u_i \ge 0 \text{ if } \theta^*_i \le 0\}.
\end{equation}
Also,
\begin{equation}
(\theta^*-\Pi_{\R^n_+}(\theta^*))^\perp
= \braces*{
    u \in \R^n:
    \sum_{i : \theta^*_i < 0} \theta^*_i u_i=0
}
\end{equation}
The intersection is thus
\begin{align}
T_{\R^n_+}(\Pi_{\R^n_+}(\theta^*))
\cap
(\theta^*-\Pi_{\R^n_+}(\theta^*))^\perp
= \braces*{
    u \in \R^n :
    \begin{aligned}
    u_i \ge 0 && \text{if } \theta^*_i=0\\
    u_i = 0 && \text{if } \theta^*_i < 0
    \end{aligned}
}
\cong
\R^{n_+} \times \R_+^{n_0} \times \{0\}^{n-n_+-n_0}.
\end{align}
The result follows by noting $\delta(\R)=1$ and $\delta(\R_+)=1/2$
and by using the fact that
$\delta(T_1 \times T_2) = \delta(T_1) + \delta(T_2)$
for any two cones $T_1$ and $T_2$ \cite{amelunxen2014living}.
\end{proof}

\begin{remark}
For $\theta^* \in \R^n$ let $n_+$ and $n_0$ be as defined in
\autoref{corollary:orthant}. Then the low $\sigma$ limit for the
corresponding
well-specified problem $Y \sim N(\Pi_{\R^n_+}(\theta^*), \sigma^2 I)$
is $\frac{n-n_+}{2} + n_+$ since all negative components of $\theta^*$
are sent to zero by $\Pi_{\R^n_+}$. This is larger than the low
$\sigma$ limit for the misspecified problem $Y \sim N(\theta^*,
\sigma^2 I)$ because $n-n_+ \ge n_0$, with strict inequality if
$\theta^* \notin \R^n_+$.
\end{remark}

\begin{figure}[!ht]
\centering
\begin{subfigure}[t]{0.3\textwidth}
\centering
\begin{tikzpicture}[scale=1]
\fill[gray!30] (0,0) rectangle (2,2);
\draw[->] (0,0) -- (0,2);
\draw[->] (0,0) -- (2,0);
\fill (1,-1) circle[radius=0.5mm];
\node[right] at (1,-1) {$\theta^*$};
\fill (1,0) circle[radius=0.5mm];
\node[below right] at (1,0) {$\Pi(\theta^*)$};
\draw[<->, line width = 0.4mm] (-1,0) -- (2,0);
\draw[dashed] (1,-1) -- (1,0);
\end{tikzpicture}
\caption{$\theta = (1,-1)$; $\delta=1$}
\end{subfigure}
~
\begin{subfigure}[t]{0.3\textwidth}
\centering
\begin{tikzpicture}[scale=1]
\fill[gray!30] (0,0) rectangle (2,2);
\draw[->] (0,0) -- (0,2);
\draw[->] (0,0) -- (2,0);
\fill (0,-1) circle[radius=0.5mm];
\node[right] at (0,-1) {$\theta^*$};
\fill (0,0) circle[radius=0.5mm];
\node[below right] at (0,0) {$\Pi(\theta^*)$};
\draw[->, line width = 0.4mm] (0,0) -- (2,0);
\draw[dashed] (0,-1) -- (0,0);
\end{tikzpicture}
\caption{$\theta = (0,-1)$; $\delta=1/2$}
\end{subfigure}
~
\begin{subfigure}[t]{0.3\textwidth}
\centering
\begin{tikzpicture}[scale=1]
\fill[gray!30] (0,0) rectangle (2,2);
\draw[->] (0,0) -- (0,2);
\draw[->] (0,0) -- (2,0);
\fill (-1,-1) circle[radius=0.5mm];
\node[right] at (-1,-1) {$\theta^*$};
\fill (0,0) circle[radius=0.8mm];
\node[below right] at (0,0) {$\Pi(\theta^*)$};
\draw[dashed] (-1,-1) -- (0,0);
\end{tikzpicture}
\caption{$\theta = (-1,-1)$; $\delta=0$}
\label{subfigure:orthant_zero}
\end{subfigure}
\caption{$\R^2_+$ is marked by the gray area.
The intersection $T_{\R^2_+}(\Pi_{\R^2_+}(\theta^*)) \cap (\theta^* - \Pi_{\R^2_+}(\theta^*))^\perp$ [translated to be centered at $\Pi_{\R^2_+}(\theta^*)$] is marked by the bold lines in the first two examples, and the bold point in the third example. Each sub-caption states the statistical dimension $\delta = \delta(T_{\R^2_+}(\Pi_{\R^2_+}(\theta^*)) \cap (\theta^* - \Pi_{\R^2_+}(\theta^*))^\perp)$.}
\label{figure:nonnegative_orthant}
\end{figure}
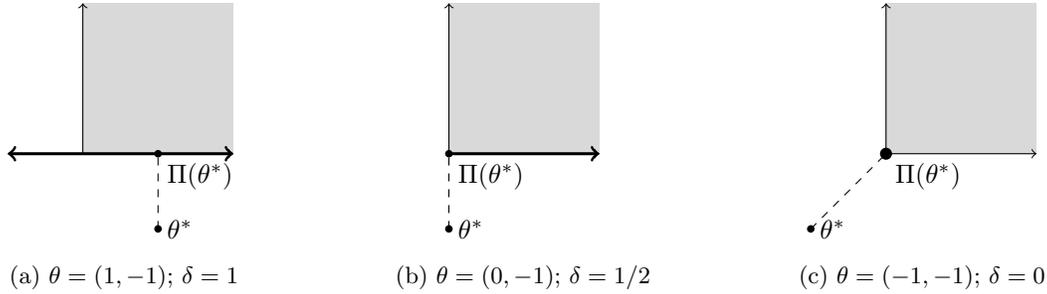

\subsection{Consequences for isotonic regression}\label{subsection:isotonic}
This section details interesting consequences of
\autoref{theorem:lb_polyhedron} for isotonic regression under
misspecification. Let
\begin{equation}
    \S^n \coloneqq \{u \in \R^n : u_1 \le \cdots \le u_n\}
\end{equation}
be the \textit{monotone cone}.
We call elements of $\mathcal{S}^n$ \textit{nondecreasing}.

By a \textit{block}, we refer to a set of the form $\{k, k+1, \dots,
l\}$ for two nonnegative integers $k \leq l$. Consider a partition
of $\{1,\ldots,n\}$ into blocks $I_1,\ldots,I_m$ listed in increasing
order (i.e., the maximum entry of $I_i$ is strictly smaller than the
minimum entry of $I_j$ for $i < j$). Let $\abs{I_j}$ denote the
cardinality of $I_j$ and note that $\sum_{j=1}^m |I_j|= n$ as $I_1,
\dots, I_m$ form a partition of $\{1, \dots, n\}$. Let $\S_{\abs{I_1},
\dots, \abs{_m}}$ denote the induced \textit{block monotone cone}
defined as
\begin{equation}\label{equation:block_monotone}
\S_{\abs{I_1},\ldots,\abs{I_m}}
\coloneqq \{u \in \S^{n} : \text{$u$ is constant on each of the blocks $I_1,\ldots,I_m$}\}
\end{equation}
For example,
\begin{equation}
\S_{2,3,2} = \{u \in \R^{2+3+2} : u_1 = u_2 \le u_3 = u_4 = u_5 \le u_6 = u_7\}.
\end{equation}

\autoref{theorem:lb_polyhedron} implies the following result, which we
prove in \autoref{subsection:proof_isotonic}.

\begin{proposition}[Isotonic regression]\label{proposition:isotonic}
Let $Y \sim N(\theta^*, \sigma^2 I)$ where $\theta^* \in \R^n$.
Let $(J_1,\ldots,J_K)$ be the partition of $\{1,\ldots,n\}$ into blocks such that
$\Pi_{\S^n}(\theta^*)$ is constant on each $J_k$ with
respective values $\mu_1 < \cdots < \mu_K$. For each $k \in
\{1,\ldots, K\}$, there exists a unique finest partition
$(I^k_1,\ldots, I^k_{m_k})$
of $J_k$ into blocks such that for all $j \in
\{1,\ldots,m_k\}$, the mean of the components of
$\theta^*$ on each $I_j^k$ equals $\mu_k$; that is,
\begin{equation}\label{equation:partition_mean}
\frac{1}{\abs*{I^k_j}} \sum_{i \in I^k_j} \theta^*_i = \mu_k,
\qquad 1 \le j \le m_k.
\end{equation}
Then the common low $\sigma$ limit of the
normalized excess risk \eqref{equation:excess_risk} and normalized
misspecified risk \eqref{equation:misspecified_risk} of the isotonic least squares
estimator $\hat{\theta}(Y)\coloneqq \Pi_{\S^n}(Y)$ equals
\begin{equation}\label{equation:isotone_limit}
\sum_{k=1}^K
\delta\parens*{\S_{\abs{I^k_1},\ldots,\abs{I^k_{m_k}}}}.
\end{equation}
\end{proposition}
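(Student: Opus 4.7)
The plan is to invoke \autoref{theorem:lb_polyhedron}, whose hypothesis is guaranteed by \autoref{lemma:drop_constraints} since $\S^n$ is polyhedral, so that the common low $\sigma$ limit equals the generalized statistical dimension of $T_{\S^n}(\hat\theta) \cap (\theta^* - \hat\theta)^\perp$, where $\hat\theta \coloneqq \Pi_{\S^n}(\theta^*)$. The rest of the argument reduces to computing this cone explicitly.

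First I would identify the tangent cone. Because $\hat\theta$ takes the strictly increasing values $\mu_1 < \cdots < \mu_K$ on the blocks $J_1, \ldots, J_K$, only the monotonicity constraints \emph{internal} to each $J_k$ are active at $\hat\theta$, and so
\begin{equation}
T_{\S^n}(\hat\theta)
= \{u \in \R^n : u_i \le u_{i+1} \text{ whenever } i, i+1 \in J_k \text{ for some } k\}
\cong \S^{\abs{J_1}} \times \cdots \times \S^{\abs{J_K}}.
\end{equation}
Writing $r \coloneqq \theta^* - \hat\theta$ and using that $\mu_k$ is the mean of $\theta^*|_{J_k}$ (a standard consequence of the characterization of $\Pi_{\S^n}$), one has $\sum_{i \in J_k} r_i = 0$ for each $k$. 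The optimality condition \eqref{equation:optimality condition} yields $\inner{u, r} \le 0$ for every $u$ in the tangent cone; applying this to $u$ supported on a single $J_k$ (which remains in the tangent cone thanks to the product structure) shows each per-block sum $\sum_{i \in J_k} u_i r_i$ is $\le 0$. Consequently $\inner{u, r} = 0$ \emph{decouples} into $\sum_{i \in J_k} u_i r_i = 0$ for every $k$.

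The key step is then to characterize the nondecreasing $(u_i)_{i \in J_k}$ that annihilate $\sum_{i \in J_k} u_i r_i$. Writing $J_k = \{i_0+1, \ldots, i_0+L\}$ and $S_j \coloneqq \sum_{l=1}^j r_{i_0+l}$, summation by parts combined with $S_0 = S_L = 0$ gives
\begin{equation}
\sum_{i \in J_k} u_i r_i = -\sum_{j=1}^{L-1} S_j (u_{i_0+j+1} - u_{i_0+j}).
\end{equation}
The greatest convex minorant characterization of the isotonic projection shows $S_j \ge 0$ throughout $J_k$, with $S_j = 0$ precisely at the internal breakpoints of the finest partition $(I_1^k, \ldots, I_{m_k}^k)$ of $J_k$ with mean $\mu_k$ on each piece: these breakpoints are exactly the positions at which the cumulative residual returns to zero. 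Since each summand above is $\le 0$, vanishing of the total forces $u_{i_0+j+1} = u_{i_0+j}$ whenever $S_j > 0$, i.e., $u$ must be constant on each $I_j^k$. Combined with monotonicity on $J_k$, this identifies the admissible $u|_{J_k}$ with $\S_{\abs{I_1^k}, \ldots, \abs{I_{m_k}^k}}$, the reverse inclusion being immediate from the same identity.

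Assembling the pieces, $T_{\S^n}(\hat\theta) \cap r^\perp$ is isomorphic to $\prod_k \S_{\abs{I_1^k}, \ldots, \abs{I_{m_k}^k}}$, and the additivity $\delta(T_1 \times T_2) = \delta(T_1) + \delta(T_2)$ \cite{amelunxen2014living} delivers the formula \eqref{equation:isotone_limit}. The main obstacle is the Abel-summation step: one must carefully verify that the zero set of the cumulative residual $S_j$ within each $J_k$ coincides with the breakpoints of the finest mean-$\mu_k$ partition of $J_k$, which ultimately rests on the pool-adjacent-violators / greatest convex minorant description of the isotonic projection.
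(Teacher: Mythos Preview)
Your argument is correct and takes a genuinely different route from the paper's. The paper proceeds via the \emph{extreme rays} of the tangent cone: it introduces a general lemma (\autoref{lemma:intersection_cone_gen}) stating that for any conic generators $x_1,\ldots,x_p$ of $T_\C(\Pi_\C(\theta^*))$, the intersection with $(\theta^*-\Pi_\C(\theta^*))^\perp$ is generated precisely by those $x_i$ lying in the hyperplane. It then lists the generators of $\S^{|J_k|}$ explicitly (the rows of a certain matrix $M_{|J_k|}$, namely $\pm(1,\ldots,1)$ together with the indicators of terminal segments) and checks which of these survive the orthogonality test, obtaining the block monotone cone directly. Your approach is essentially dual: you work with the \emph{facets} of the tangent cone via the Abel summation identity $\sum_i u_i r_i = -\sum_j S_j(u_{j+1}-u_j)$, so that the nonnegativity of the cumulative residuals $S_j$ (equivalently, the GCM lying below the cumulative sum diagram) plays the role that the optimality condition plays in the paper's generator argument. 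The paper's route yields a reusable tool (\autoref{lemma:intersection_cone_gen}) applicable to any polyhedral cone with known generators; your route is more self-contained and makes the link between the finest partition and the zero set of $S_j$ transparent, at the cost of being specific to the monotone cone. One small point worth tightening: your claim that the internal zeros of $S_j$ coincide exactly with the breakpoints of the finest mean-$\mu_k$ partition is immediate from the definition (zeros of $S_j$ are precisely the prefixes with mean $\mu_k$, and any mean-$\mu_k$ block partition has breakpoints among these), so you need not appeal to PAVA there---only the inequality $S_j\ge 0$ genuinely requires the GCM/optimality characterization.
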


It is clear from the above proposition that the low $\sigma$ behavior
of the isotonic estimator under misspecification crucially depends
on the statistical dimension of the block monotone cone
$\S_{\abs{I^k_1},\ldots,\abs{I^k_{m_k}}}$.
[We remark again that throughout this section we only deal with
the usual statistical dimension,
where the noise $Z$ in the definition \eqref{equation:stat_dim}
is standard Gaussian.]
Here, we provide two simple properties of the block monotone cone \eqref{equation:block_monotone},
each of which implies that when the block sizes are equal, the statistical dimension is simply that of $\S^{m_k}$.
The first result provides a direct connection to weighted isotonic regression.

\begin{lemma}[Weighted isotonic regression]\label{lemma:weighted_isotonic_regression}
Let $z \in \R^n$ and let $I_1,\ldots,I_m$ be a partition of $\{1,\ldots,n\}$
into blocks.
Let $\bar{z}_{I_j}\coloneqq \frac{1}{\abs{I_j}} \sum_{i \in I_j} z_i$.
Then
$\Pi_{\S_{\abs{I_1},\ldots,\abs{I_m}}}(y)$
is the vector that is constant on the blocks $I_1,\ldots,I_m$
with constant values $x^*_1,\ldots,x^*_m$,
where $x^*=(x^*_1,\ldots,x^*_m)$ is
\begin{equation}
x^* = \argmin_{x \in \S^m} \sum_{j=1}^m \abs{I_j}(x_i-\bar{z}_{I_j})^2.
\end{equation}
In other words, the values on the constant blocks of
$\Pi_{\S_{\abs{I_1},\ldots,\abs{I_m}}}(z)$
can be found by weighted isotonic regression of $(\bar{z}_{I_1},\ldots,\bar{z}_{I_m}) \in \R^m$
with weights $\abs{I_1},\ldots,\abs{I_m}$.

Consequently, when $\abs{I_1}=\cdots=\abs{I_m}$,
the statistical dimension of the block monotone cone is
\begin{equation}
\delta(\S_{\abs{I_1},\ldots,\abs{I_m}})
= \sum_{j=1}^m \frac{1}{j}.
\end{equation}
\end{lemma}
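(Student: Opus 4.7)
The plan is to handle the two claims in sequence. The first is a re-parameterization argument, and the second reduces via a scaling argument to the known formula for the statistical dimension of the monotone cone.

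For the weighted-isotonic formulation, I would parameterize each $u \in \S_{\abs{I_1},\ldots,\abs{I_m}}$ by its block values: write $u = \sum_{j=1}^m x_j \b{1}_{I_j}$ with $x \in \S^m$, where $\b{1}_{I_j}$ is the indicator vector of the block $I_j$. Expanding the squared distance block by block and applying the standard bias-variance identity $\sum_{i \in I_j}(x_j - z_i)^2 = \abs{I_j}(x_j - \bar{z}_{I_j})^2 + \sum_{i \in I_j}(z_i - \bar{z}_{I_j})^2$ to each block gives
\[
\norm{u - z}^2 = \sum_{j=1}^m \abs{I_j}(x_j - \bar{z}_{I_j})^2 + \sum_{j=1}^m \sum_{i \in I_j}(z_i - \bar{z}_{I_j})^2.
\]
The second sum does not depend on $x$, so minimizing the left-hand side over $u \in \S_{\abs{I_1},\ldots,\abs{I_m}}$ is equivalent to minimizing the first sum over $x \in \S^m$, which is exactly the weighted isotonic regression problem in the lemma statement.

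For the statistical-dimension claim, assume $\abs{I_j} = q$ for every $j$, so $n = mq$. By the first part, $\Pi_{\S_{q,\ldots,q}}(Z)$ has constant value $x_j^*$ on $I_j$, where $x^* = \Pi_{\S^m}(\bar{Z})$ with $\bar{Z} = (\bar{Z}_{I_1},\ldots,\bar{Z}_{I_m})$ (weighted isotonic regression with equal weights reduces to ordinary isotonic regression). Since $Z \sim N(0, I_n)$ and the blocks are disjoint, $\bar{Z} \sim N(0, q^{-1} I_m)$, so $\bar{Z} \stackrel{d}{=} q^{-1/2} W$ for $W \sim N(0, I_m)$. Positive homogeneity of the projection onto the cone $\S^m$ then yields $x^* \stackrel{d}{=} q^{-1/2} \Pi_{\S^m}(W)$, and hence
\[
\norm{\Pi_{\S_{q,\ldots,q}}(Z)}^2 = \sum_{j=1}^m q\,(x_j^*)^2 \stackrel{d}{=} \norm{\Pi_{\S^m}(W)}^2.
\]
Taking expectations gives $\delta(\S_{q,\ldots,q}) = \delta(\S^m) = \sum_{j=1}^m 1/j$, where the last equality is the classical harmonic-sum formula for the statistical dimension of the $m$-dimensional monotone cone.

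The argument is essentially mechanical, and there is no real obstacle. The only point that warrants attention is bookkeeping: the factor of $q$ multiplying $(x_j^*)^2$ in the squared norm on the block-monotone cone exactly cancels the $q^{-1}$ scaling in the variance of each $\bar{Z}_{I_j}$, which is what produces the clean distributional equality $\norm{\Pi_{\S_{q,\ldots,q}}(Z)}^2 \stackrel{d}{=} \norm{\Pi_{\S^m}(W)}^2$. One must also invoke $\delta(\S^m) = \sum_{j=1}^m 1/j$ as a known fact, whose standard derivation goes via the correspondence between projections onto $\S^m$ and expected cycle counts of a uniform random permutation.
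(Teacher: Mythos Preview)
Your proposal is correct and follows essentially the same approach as the paper: the first part is the same block-by-block bias--variance decomposition, and the second part is the same scaling argument using $\bar{Z} \stackrel{d}{=} q^{-1/2} W$ together with positive homogeneity of projection onto the cone $\S^m$, reducing to the known formula $\delta(\S^m) = \sum_{j=1}^m 1/j$.
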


The next lemma shows $\S_{\abs{I_1},\ldots,\abs{I_m}}$ is isometric to a particular cone in the lower-dimensional space $\R^m$.

\begin{lemma}[Block monotone cone isometry]\label{lemma:block_monotone_stat_dim}
The block monotone cone
$\S_{\abs{I_1},\ldots,\abs{I_m}} \subseteq \R^n$
is isometric to
\begin{equation}\label{equation:block_monotone_embed}
\braces*{v \in \R^m : \frac{v_1}{\sqrt{\abs{I_1}}} \le \cdots \le \frac{v_m}{\sqrt{\abs{I_m}}}}\subseteq \R^m,
\end{equation}
and thus both sets have the same statistical dimension.
In particular, if
$\abs{I_1}=\cdots=\abs{I_m}$, then the statistical dimension of the block monotone cone is
\begin{equation}
\delta(\S_{\abs{I_1},\ldots,\abs{I_m}})
= \sum_{j=1}^m \frac{1}{j}.
\end{equation}
\end{lemma}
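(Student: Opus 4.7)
The plan is to construct the isometry explicitly and then transfer the statistical-dimension computation through it. Define the linear map $\phi : \R^m \to \R^n$ by
\begin{equation}
(\phi(v))_i \coloneqq \frac{v_j}{\sqrt{\abs{I_j}}} \qquad \text{whenever } i \in I_j.
\end{equation}
First I would check that $\phi$ is an isometric embedding: $\norm{\phi(v)}^2 = \sum_{j=1}^m \abs{I_j} \cdot (v_j/\sqrt{\abs{I_j}})^2 = \sum_{j=1}^m v_j^2 = \norm{v}^2$. Next I would verify that $\phi$ carries the set in \eqref{equation:block_monotone_embed} bijectively onto $\S_{\abs{I_1},\ldots,\abs{I_m}}$. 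The image of $\phi$ is exactly the subspace $V \subseteq \R^n$ of vectors that are constant on each block, and on that image the constraint $v_1/\sqrt{\abs{I_1}} \le \cdots \le v_m/\sqrt{\abs{I_m}}$ becomes precisely the monotonicity of the block-constant values, i.e.\ membership in $\S_{\abs{I_1},\ldots,\abs{I_m}}$.

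To transfer the statistical dimension, I would use that $\S_{\abs{I_1},\ldots,\abs{I_m}}$ lies inside the $m$-dimensional subspace $V$. For any closed convex cone $T \subseteq V$ and $Z \sim N(0, I_n)$, the projection onto $T$ factors as $\Pi_T(Z) = \Pi_T(\Pi_V(Z))$ by the Pythagorean identity used already in the proof of \autoref{theorem:lb_polyhedron}. Since $Z$ is standard Gaussian, $\Pi_V(Z)$ is standard Gaussian on $V$ (with its inherited inner product), so under the isometry $\phi$ it pulls back to a standard Gaussian $Z' \sim N(0, I_m)$. Because $\phi$ preserves inner products, it commutes with projections onto convex sets, giving $\|\Pi_T(Z)\|^2 = \|\Pi_{\phi^{-1}(T)}(Z')\|^2$ in distribution. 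Taking expectations yields equality of the two statistical dimensions.

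For the equal-block-size case, $\abs{I_1}=\cdots=\abs{I_m}$ makes the constraint $v_1/\sqrt{\abs{I_1}} \le \cdots \le v_m/\sqrt{\abs{I_m}}$ reduce to $v_1 \le \cdots \le v_m$, so the image is exactly $\S^m$. Thus $\delta(\S_{\abs{I_1},\ldots,\abs{I_m}}) = \delta(\S^m)$, and I would cite the well-known identity $\delta(\S^m) = \sum_{j=1}^m 1/j$ (for example, this is the statistical dimension of the monotone cone computed in \cite{amelunxen2014living}, and it also follows from \autoref{lemma:weighted_isotonic_regression} applied with trivial blocks).

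The only mildly delicate step is the invariance of statistical dimension under the isometric embedding $\phi$; everything else is a direct unpacking of definitions. That step is handled cleanly by the factorization $\Pi_T = \Pi_T \circ \Pi_V$ together with the rotational invariance of the standard Gaussian, which together show that computing $\E \|\Pi_T(Z)\|^2$ in $\R^n$ is the same as computing $\E \|\Pi_{\phi^{-1}(T)}(Z')\|^2$ in $\R^m$.
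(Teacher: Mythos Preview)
Your proof is correct and in fact cleaner than the paper's. You write down the isometry $\phi:\R^m\to\R^n$ directly as the obvious block-constant scaling, verify by one line that it is an isometric bijection onto $\S_{\abs{I_1},\ldots,\abs{I_m}}$, and then transfer the statistical dimension by the factorization $\Pi_T=\Pi_T\circ\Pi_V$ together with rotational invariance of the standard Gaussian. The paper instead works entirely inside $\R^n$: it builds a full orthogonal matrix $Q$ on $\R^n$ via a block QR decomposition of the difference matrix $A$ with rows $e_i-e_{i+1}$, tracks how $Q$ transforms each equality and inequality constraint of the block monotone cone, and appeals to a separate lemma to identify the relevant diagonal entries of $R$ as $1/\sqrt{\abs{I_j}}$; only then does it invoke the rotational and embedding invariance of $\delta$ from \cite{amelunxen2014living}. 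Your route avoids the QR machinery and the auxiliary lemma entirely, at the cost of re-deriving the embedding invariance yourself rather than citing it; either way the content is the same, but your explicit $\phi$ makes the isometry transparent.
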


Both lemmas are proved in \autoref{subsection:proofs_block_monotone}.
Note that for the case $\abs{I_1} = \cdots = \abs{I_m} = 1$,
both lemmas reduce to the statement of the
statistical dimension of the monotone cone $\S^n$ \cite[Eq. D.12]{amelunxen2014living}.
More generally, when the $m$ blocks have equal size,
the statistical dimension of the associated block monotone cone
is the same as that of the monotone cone $\S^m$.
In \autoref{subsection:stat_dim_block_monotone},
we discuss what \autoref{lemma:block_monotone_stat_dim} suggests
for the completely general case when the block sizes are arbitrary.

By combining either of these two lemmas with \autoref{proposition:isotonic},
we immediately obtain an explicit expression for the low $\sigma$ limits in a special case.
For $m \geq 1$, we denote
the harmonic number $\sum_{j=1}^m (1/j)$ by $H_m$.

\begin{corollary}[Isotonic regression with equal sub-block sizes]
Consider the setting of \autoref{proposition:isotonic}.
In the special case where
\begin{equation}\label{equation:special_case}
\abs{I^k_1} = \cdots = \abs{I^k_{m_k}} \qt{for each $k \in \{1,\ldots, K\}$,}
\end{equation}
the common low $\sigma$ limit has the following explicit expression:
\begin{equation}\label{equation:isotone_limit_special}
\sum_{k=1}^K H_{m_k} = \sum_{k=1}^K \sum_{j=1}^{m_k} \frac{1}{j}.
\end{equation}
\end{corollary}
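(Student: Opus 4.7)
The plan is to combine Proposition \ref{proposition:isotonic} with either Lemma \ref{lemma:weighted_isotonic_regression} or Lemma \ref{lemma:block_monotone_stat_dim}, both of which handle precisely the equal-block-size case. All of the real work has been done already; what remains is a routine plug-in.

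First I would invoke Proposition \ref{proposition:isotonic}, which gives that the common low $\sigma$ limit of the normalized excess and misspecified risks of the isotonic least squares estimator equals
\begin{equation}
\sum_{k=1}^K \delta\parens*{\S_{\abs{I^k_1},\ldots,\abs{I^k_{m_k}}}}.
\end{equation}
This reduces the problem to computing, for each $k$, the statistical dimension of the block monotone cone $\S_{\abs{I^k_1},\ldots,\abs{I^k_{m_k}}}$ under the hypothesis \eqref{equation:special_case} that all $m_k$ blocks within index $k$ have a common cardinality.

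Next I would apply Lemma \ref{lemma:block_monotone_stat_dim} (equivalently, the second part of Lemma \ref{lemma:weighted_isotonic_regression}) individually to each $k$. Since $\abs{I^k_1} = \cdots = \abs{I^k_{m_k}}$, that lemma gives
\begin{equation}
\delta\parens*{\S_{\abs{I^k_1},\ldots,\abs{I^k_{m_k}}}} = \sum_{j=1}^{m_k} \frac{1}{j} = H_{m_k}.
\end{equation}
Summing this identity over $k = 1, \ldots, K$ yields the claimed expression
\begin{equation}
\sum_{k=1}^K H_{m_k} = \sum_{k=1}^K \sum_{j=1}^{m_k} \frac{1}{j},
\end{equation}
matching \eqref{equation:isotone_limit_special}.

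There is no real obstacle here: the corollary is a direct specialization of Proposition \ref{proposition:isotonic} once one knows the statistical dimension of a block monotone cone with equal block sizes, which is already recorded in Lemmas \ref{lemma:weighted_isotonic_regression} and \ref{lemma:block_monotone_stat_dim}. The only thing to verify is that the equal-sub-block-size hypothesis in \eqref{equation:special_case} is applied \emph{within} each group $J_k$ (i.e.\ it does not assume blocks across different $k$ have equal sizes), which is exactly what both supporting lemmas require, since they are stated for a single block monotone cone. Hence the result follows immediately.
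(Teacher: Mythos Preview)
Your proposal is correct and matches the paper's approach exactly: the paper states that the corollary follows ``immediately'' by combining Proposition~\ref{proposition:isotonic} with either Lemma~\ref{lemma:weighted_isotonic_regression} or Lemma~\ref{lemma:block_monotone_stat_dim}, which is precisely the plug-in argument you wrote out.
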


See the examples to follow (as well as \autoref{subsection:stat_dim_block_monotone}) for further discussion about how the statistical dimension of
$\S_{\abs{I^k_1},\ldots,\abs{I^k_{m_k}}}$ behaves in general,
when the special condition \eqref{equation:special_case} does not hold.

In \autoref{table:isotonic}, we demonstrate how to apply this theorem to various cases of $\theta^*$.
In the ``partition of $\theta^*$'' column,
we use square brackets to partition the components of $\theta^*$ into
$K$ blocks according to the constant pieces $\mu_1 < \cdots < \mu_K$
of $\Pi_{\S^n}(\theta^*)$, and then within the $k$th group use
parentheses to further partition the components into $m_k$ sub-blocks
each with common mean $\mu_k$.

\def\arraystretch{1.25}
\begin{table}[!ht]
\small
\centering
\begin{tabular}{ccccc}
$\theta^*$ & $\Pi_{\S^n}(\theta^*)$ & partition of $\theta^*$ & $m_1,\ldots,m_K$ & $\sum_{k=1}^K H_{m_k}$
\\ \hline\hline
$(0,0,0,0,0,0)$ & $(0,0,0,0,0,0)$ & $[(0),(0),(0),(0),(0),(0)]$ & $6$ & $H_6=2.45$
\\
$(1,-1,1,-1,1,-1)$ & $(0,0,0,0,0,0)$ & $[(1,-1),(1,-1),(1,-1)]$ & $3$ & $H_3=1.8\bar{3}$
\\
$(5,3,1,-1,-3,-5)$ & $(0,0,0,0,0,0)$ & $[(5,3,1,-1,-3,-5)]$ & $1$ & $H_1=1$
\\ \hline
$(-1,-1,-1, -1, 2, 2)$ & $(-1,-1,-1,-1,2,2)$ & $[(-1), (-1), (-1), (-1)], [(2), (2)]$ & $4,2$ & $H_4+H_2 = 3.58\bar{3}$
\\
$(0,-2,1,-3,2,2)$ & $(-1,-1,-1,-1,2,2)$ & $[(0,-2), (1,-3)], [(2), (2)]$ & $2,2$ & $H_2+H_2 = 3$
\\
$(0,0,-2,-2,3,1)$ & $(-1,-1,-1,-1,2,2)$ & $[(0,0,-2,-2)], [(3,1)]$ & $1,1$ & $H_1+H_1=2$
\end{tabular}
\caption{Examples of how to compute the limit in \autoref{proposition:isotonic} in the special case \eqref{equation:special_case}.}
\label{table:isotonic}
\end{table}

We now discuss in detail what \autoref{proposition:isotonic} states for certain cases of $\theta^*$.

\begin{enumerate}
    \item
    In the well-specified case where $\theta^* \in \S^n$,
    we have $\theta^*_j = \mu_k$ for all $j \in J_k$ and $k \in \{1,\ldots,K\}$,
    so the finest partition of each $J_k$ is the partition into singleton sets.
    Then $m_k = \abs{J_k}$ for each $k$,
    and moreover $\abs{I^k_j}=1$ for all valid $k$ and $j$.
    Thus, \autoref{proposition:isotonic} implies that
    both low $\sigma$ limits are
    \begin{equation}\label{equation:isotone_limit_ws}
        \sum_{k=1}^K H_{\abs{J_k}}
        \coloneqq \sum_{k=1}^K \sum_{j=1}^{\abs{J_k}} \frac{1}{j},
    \end{equation}
    This is precisely the upper bound \eqref{equation:bellec_ub} for the monotone cone
    as computed by \citet[Prop. 3.1]{bellec2015sharpshape},
    so we recover the low $\sigma$ limit \eqref{equation:oymakhassibi}.
    Computations for the well-specified examples $\theta^* =(0,0,0,0,0,0)$ and $\theta^*=(-1,-1,-1,-1,2,2)$
    appear in \autoref{table:isotonic}.

    Now, consider the misspecified problem $Y \sim N(\theta^*, \sigma^2 I_n)$
    with $\theta^* \notin \mathcal{S}^n$,
    and compare the statement of \autoref{proposition:isotonic}
    with the corresponding statemetn for the
    well-specified problem $Y \sim N(\Pi_{\S^n}(\theta^*),\sigma^2 I)$.
    In both cases, the partition of $\{1,\ldots,n\}$ into $(J_1,\ldots,J_K)$ is the same.
    However, we showed above that in the well-specified problem,
    the sub-partition of each $J_k$ consists of singletons,
    whereas for the misspecified problem we may get nontrivial partitions $(I^k_1,\ldots,I^k_{m_k})$.
    Noting the inclusion
    $\S_{\abs{I^k_1},\ldots,\abs{I^k_{m_k}}} \subseteq \S^{\abs{J_k}}$
    for each $k$ and comparing \eqref{equation:isotone_limit}
    for the two cases yields
    \begin{equation}
    \sum_{k=1}^K \delta(\S_{\abs{I^k_1},\ldots,\abs{I^k_{m_k}}})
    \le \sum_{k=1}^K \delta(\S^{\abs{J_k}}),
    \end{equation}
    which shows that in general the misspecified low $\sigma$ limit is smaller than
    the corresponding well-specified limit.

    \item
    Suppose $\theta^*$ is nonincreasing and nonconstant i.e., $\theta^*
    \in (-\S^n) \setminus \S^n$. Then $\Pi_{\S^n}(\theta^*)$ is
    constant (see \cite{RWD88} for various properties of
    $\Pi_{\S^n}$), so $K=1$ and $\mu_1 = \frac{1}{n} \sum_{i=1}^n
    \theta^*_i$.  We also claim $m_1=1$. Indeed, if $m_1 > 1$ then
    there exists some $j < n$ such that
    $\mu = \frac{1}{j} \sum_{i=1}^j \theta^*_i = \frac{1}{n-j} \sum_{i=j+1}^n \theta^*_i$.
    However, the fact that $\theta^*$ is nonincreasing and nonconstant implies
    $\frac{1}{j} \sum_{i=1}^j \theta^*_i > \frac{1}{n-j} \sum_{i=j+1}^n \theta^*_i$,
    a contradiction.
    Thus, \autoref{proposition:isotonic} implies that both low $\sigma$ limits are $1$.
    (In fact, by combining the above argument with the proof of \autoref{proposition:isotonic},
    we have shown that the intersection
    $T_{\S^n}(\Pi_{\S^n}(\theta^*)) \cap (\theta^* - \Pi_{\S^n}(\theta^*))^\perp$
    is simply the subspace of constant sequences.)
    On the other hand, since $\Pi_{\S^n}(\theta^*)$ is constant, the low $\sigma$ limit
    in the well-specified setting $Y \sim N(\Pi_{\S^n}(\theta^*),
    \sigma^2 I_n)$ is $\sum_{j=1}^n \frac{1}{j} \asymp \log n$, which
    is much larger.

    The logarithmic term appears here in
    the well-specified case due to the well-known spiking effect of
    isotonic regression (documented, for example,
    by \citet{pal2008spiking, wu2015penalized, Zhang02}). Indeed,
    the isotonic estimator is inconsistent near
    the end points which leads to the logarithm term in the risk.
    However, in the misspecified case when $\theta^*$ is
    nonincreasing and nonconstant,
    a combination of
    the proof of \autoref{theorem:lb_polyhedron}
    (in particular \autoref{lemma:project_hyperplane})
    with the fact that
    $T_{\S^n}(\Pi_{\S^n}(\theta^*)) \cap (\theta^* - \Pi_{\S^n}(\theta^*))^\perp$
    is the subspace of all constant sequences
    implies
    $\hat{\theta}(Y)$ is a constant sequence with probability increasing to $1$ as $\sigma \downarrow 0$,
    in which case the constant value must be the sample mean
    $\bar{Y} \coloneqq \frac{1}{n} \sum_{i=1}^n Y_i$.
    Alternatively, one can rephrase the geometric argument
    in \autoref{lemma:project_hyperplane}
    more simply in this example; when $\sigma$ is small, $Y$ is near $\theta^*$
    and thus is also nondecreasing with high probability, in which case $\hat{\theta}(Y)$ is constant, due to the properties of the projection $\Pi_{\mathcal{S}^n}$.
    Hence, in this situation the estimator does not
    suffer from any spiking at the endpoints, and consequently there are
    no logarithmic terms in the risk in the misspecified case in the
    low sigma limit.

    Computations for the specific example when $\theta^* =
    (5,3,1,-1,-3,-5)$ appear in \autoref{table:isotonic}.

    \item In the first half of \autoref{table:isotonic} we consider
      three choices for $\theta^*$ that project to
      $\Pi_{\S^n}(\theta^*)=(0,0,0,0,0,0)$.
    Here $K=1$ and the sub-block sizes $\abs{I^1_1},\ldots,\abs{I^1_{m_1}}$ are equal in each case (namely, the common block size is $1$, $2$, and $6$ respectively), so we are in the special case \eqref{equation:special_case}. Thus, the limit is $\sum_{j=1}^{m_1} \frac{1}{j}$ where $m_1$ is the number of sub-blocks. We see that for the misspecified $\theta^*$ the low $\sigma$ limits are smaller.

    One can heuristically interpret \autoref{theorem:lb_polyhedron}
    for the example $\theta^* = (1,-1,1,-1,1,-1)$ as follows.
    With probability increasing to $1$ as $\sigma \downarrow 0$,
    the estimator $\hat{\theta}(Y)$ is nondecreasing
    and piecewise constant on three equally sized blocks, so the low $\sigma$ limit is
    the same as if we were estimating $(0,0,0)$ in $\S^3$.

    \item Similarly in the second half of \autoref{table:isotonic}
    we consider three $\theta^*$ that project to $\Pi_{\S^n}(\theta^*)=(-1,-1,-1,-1,2,2)$.
    Here $K=2$
    but, since the low $\sigma$ limit decomposes, we can simply consider
    each constant piece separately.     Again, we see that the more
    sub-blocks $I^j_i$, the higher the statistical dimension, with the
    well-specified case having the most sub-blocks (all singletons).

    \item The concrete examples we have considered so far have been in
      the special case \eqref{equation:special_case}. In a few other
      cases we can still provide the low $\sigma$ limit.
      (See also \autoref{subsection:stat_dim_block_monotone} for further discussion.)

    \begin{enumerate}
        \item
        If $K=1$ and $m_1=2$, then the low $\sigma$ limit is $\delta(S_{\abs{I_1^1},\abs{I_2^1}})$.
        By \autoref{lemma:block_monotone_stat_dim}, this is the same as the statistical dimension of the half space $\{u \in \R^2 : u_1/\sqrt{\abs{I_1}} \le u_2/\sqrt{\abs{I_2}}\}$, which is $1.5$.
        However, when $m_1>2$, it is difficult to compute
        $\delta(S_{\abs{I_1^1},\ldots \abs{I_{m_1}^1}})$ unless we are
        in the special case $\abs{I_1^1} = \cdots = \abs{I_{m_1}^1}$.

        \item
        In some other extreme cases we can get an approximation. For example, if
        \begin{equation}\label{equation:sin_like_theta}
        \theta^* = (0,\underbrace{1,\ldots,1}_{(n-2)/2},\underbrace{-1,\ldots,-1}_{(n-2)/2},0),
        \end{equation}
        then $\Pi_{\S^n}(\theta^*)=(0,\ldots,0)$, so the low $\sigma$ limit is $\delta(\S_{1,n-2,1})$.
        \autoref{lemma:block_monotone_stat_dim} shows that this is the same as the statistical dimension of $\{u \in \R^3 : u_1 \le u_2/\sqrt{n-2} \le u_3\}$.
        As $n \to \infty$ tends to this set tends to $\{u \in \R^3 : u_1 \le 0 \le u_3\}$ which has statistical dimension $1+\frac{1}{2}+\frac{1}{2}=2$. Thus $\delta(\S_{1,n-2,1}) \to 2$ as $n \to \infty$.
        We used simulations to verify that
        the low $\sigma$ limit is indeed near $2$ even for $n=20$.

    \end{enumerate}
\end{enumerate}

\section{Further discussion}\label{section:discussion}

\subsection{Generalizing \autoref{theorem:lb_polyhedron} to the non-polyhedral case}
\label{subsection:generalizing}

Note that \autoref{theorem:lb_polyhedron} requires the condition
\eqref{split:polyhedral_condition}  i.e., that $\C$ is locally a
polyhedron near $\Pi_\C(\theta^*)$. Here we comment on the situation
when $\C$ is non-polyhedral. Although non-polyhedral convex sets can
be approximated by polyhedra, the low $\sigma$ limit magnifies the
local geometry of the set and ignores the goodness of such an
approximation. As a stark counterexample, consider
any closed convex $\C \subseteq \R^2$ with nonempty interior,
and let $Z \sim N(0, I_n)$.
For any polygon in $\R^2$, \autoref{theorem:lb_polyhedron}
implies that the low $\sigma$ limits are either $0$, $1/2$, or $1$
because in $\R^2$ the intersection of a convex cone with a line
intersecting the origin is either the origin, a ray, or a line.
Thus, for a sequence of polygons approximating $\C$
the sequence of corresponding low $\sigma$ limits need not even have a
limit, never mind the matter of two different sequences of polygonal
approximations having a common limit. Therefore, the low $\sigma$ limit for
general $\C$ cannot be found using a polyhedral approximation.

In order to understand how the low $\sigma$ limits behave for general
$\C$, we consider the following specific example. Let $\C \coloneqq
\{\theta \in \R^n : \norm{\theta} \le 1\}$ be the unit ball so that
$\Pi_{\C}(x) = \frac{x}{\max\{\norm{x}, 1\}}$. Also let $\theta^* \coloneqq
(r, 0, \dots, 0)$ for some $r > 1$ so that
$\Pi_{\C}(\theta^*) = (1, 0, \dots, 0)$. By rotational symmetry of $\C$,
the case of any general $\theta^* \notin \C$ can be reduced to this
case.

In the corresponding well-specified case $Y \sim N(\Pi_\C(\theta^*),\sigma^2 I_n)$), the result
\eqref{equation:oymakhassibi} of \citet{oymak2013sharp} implies that
the normalized misspecified risk \eqref{equation:misspecified_risk}
and the normalized excess risk \eqref{equation:excess_risk}
are equal in the low $\sigma$ limit with common value
\begin{equation}\label{equation:ball_ws}
\delta(T_{\C}(\Pi_\C(\theta^*)))
=n - \frac{1}{2},
\end{equation}
since the tangent cone is the half space
$T_{\C}(\Pi_\C(\theta^*))=\{x \in \R^n : x_1 \le 0\}$.

However, in the misspecified case, we observe some new phenomena that
do not occur for polyhedra.

\begin{proposition}[Low noise limits for the ball]\label{proposition:ball}
Let $\C\coloneqq \{\theta \in \R^n : \norm{\theta}_2 \le 1\}$, $\theta^* \notin \C$, and $Y \sim N(\theta^*, \sigma^2 I_n)$.
For the estimator $\hat{\theta}(Y)=\Pi_\C(Y)$, we have
\begin{subequations}
\begin{align}
    \lim_{\sigma \downarrow 0}
    \frac{1}{\sigma^2}
    M(\hat{\theta}, \theta^*)
    &=
    \frac{n-1}{\norm{\theta^*}^2},
    \label{align:ball_limit1}
    \\
    \lim_{\sigma \downarrow 0}
    \frac{1}{\sigma^2}
    E(\hat{\theta}, \theta^*)
    &=
    \frac{n-1}{\norm{\theta^*}}.
    \label{align:ball_limit2}
\end{align}
\end{subequations}
\end{proposition}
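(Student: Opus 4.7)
The plan is to exploit the rotational symmetry of the unit ball $\C$ and reduce to the concrete case $\theta^* = r e_1$ with $r = \|\theta^*\| > 1$ and $e_1 = (1, 0, \ldots, 0)$, so that $\Pi_\C(\theta^*) = e_1$. Decompose $Z = (Z_1, Z_\perp)$ with $Z_1 \in \R$ and $Z_\perp \in \R^{n-1}$, so that $Y = (Y_1, Y_\perp) = (r + \sigma Z_1,\, \sigma Z_\perp)$. Whenever $\|Y\| \ge 1$ one has $\Pi_\C(Y) = Y/\|Y\|$, and I will dispense with the event where this fails via a tail argument.

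The key step is a pair of closed-form identities valid on $\{\|Y\| \ge 1\}$. Using $\|Y/\|Y\|\|^2 = 1$ and $\|Y\|^2 - Y_1^2 = \|Y_\perp\|^2$, one obtains
\begin{align}
\|\Pi_\C(Y) - e_1\|^2
&= 2 - \frac{2 Y_1}{\|Y\|}
= \frac{2\|Y_\perp\|^2}{\|Y\|(\|Y\| + Y_1)}
= \frac{2\sigma^2\|Z_\perp\|^2}{\|Y\|(\|Y\| + r + \sigma Z_1)}, \\
\|\Pi_\C(Y) - \theta^*\|^2 - \|e_1 - \theta^*\|^2
&= 2r\left(1 - \frac{Y_1}{\|Y\|}\right)
= \frac{2r\sigma^2\|Z_\perp\|^2}{\|Y\|(\|Y\| + r + \sigma Z_1)}.
\end{align}
After dividing by $\sigma^2$, the pointwise limits of these two integrands as $\sigma \downarrow 0$ are $\|Z_\perp\|^2/r^2$ and $\|Z_\perp\|^2/r$, since $Y \to r e_1$ almost surely and hence the denominator tends to $2r^2$.

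To promote these pointwise limits to limits of expectations I would localize with a ball argument analogous to the proof of \autoref{theorem:lb_polyhedron}. Fix any $\epsilon \in (0, r-1)$ and split each expectation along $\{Y \in B_\epsilon(\theta^*)\}$ and its complement. On the complement, the elementary bound \eqref{equation:dom_conv} gives $\sigma^{-2}\|\Pi_\C(Y) - e_1\|^2 \b{1}_{\{Y \notin B_\epsilon(\theta^*)\}} \le \|Z\|^2 \b{1}_{\{\sigma\|Z\| > \epsilon\}}$ and analogously for the excess-risk integrand, so these tail contributions vanish by dominated convergence, exactly as in \eqref{equation:tail}. On the event $\{Y \in B_\epsilon(\theta^*)\}$ we have $\|Y\| \ge r - \epsilon > 1$ (so $\Pi_\C(Y) = Y/\|Y\|$ and the closed-form identities apply) and $Y_1 \ge r - \epsilon$, so the denominator is at least $2(r-\epsilon)^2$. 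The normalized integrands are therefore dominated uniformly in $\sigma$ by $\|Z_\perp\|^2/(r-\epsilon)^2$ and $r\|Z_\perp\|^2/(r-\epsilon)^2$, both integrable. A second application of dominated convergence identifies the main-term limits as $\E\|Z_\perp\|^2/r^2 = (n-1)/r^2$ and $\E\|Z_\perp\|^2/r = (n-1)/r$, which are exactly \eqref{align:ball_limit1} and \eqref{align:ball_limit2}.

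The main obstacle is really just the bookkeeping of these two dominated-convergence arguments; the geometric content is entirely captured by the closed-form identities above, which collapse the problem to the trivial computation $\E\|Z_\perp\|^2 = n-1$.
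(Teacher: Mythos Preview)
Your proof is correct, and the overall architecture matches the paper: rotate so that $\theta^* = r e_1$, localize on a ball around $\theta^*$ (the paper uses radius $(r-1)/2$, you use an arbitrary $\epsilon \in (0,r-1)$), kill the tail via \eqref{equation:dom_conv} and dominated convergence, and on the good event use $\Pi_\C(Y)=Y/\|Y\|$ together with a second dominated-convergence argument.

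The genuine difference is in how the main term is handled. The paper expands $\|\hat\theta(Y)-e_1\|^2$ coordinatewise as
\[
\frac{1}{\sigma^2}\Bigl(\tfrac{r+\sigma Z_1}{g(\sigma)}-1\Bigr)^2
+\frac{\sum_{i\ge 2} Z_i^2}{g(\sigma)^2},
\qquad g(\sigma)=\|Y\|,
\]
and then applies L'H\^opital's rule (with explicit computation of $g'$, $g''$, and, for the excess risk, an auxiliary function $h$ and $h''$) to show the first term vanishes and to identify the limit of the first excess-risk term. Your conjugate identity
\[
2\Bigl(1-\frac{Y_1}{\|Y\|}\Bigr)=\frac{2\|Y_\perp\|^2}{\|Y\|(\|Y\|+Y_1)}
\]
collapses both computations to a single fraction in which the $\sigma^2$ cancels algebraically, so no calculus is needed and the dominating function $\|Z_\perp\|^2/(r-\epsilon)^2$ is immediate from the lower bound on the denominator. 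This is shorter and more transparent than the paper's L'H\^opital route; the paper's approach, on the other hand, makes the separate contributions of the radial and tangential parts of the error visible, which is perhaps more suggestive for non-spherical $\C$.
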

The proof involves direct computation and appears in \autoref{section:ball_proof}.

We now highlight some of the interesting behavior.
In the polyhedral case, both limits were equal; in the proof of \autoref{theorem:lb_polyhedron}
(in particular \autoref{lemma:project_hyperplane})
we showed that with probability increasing to $1$ (in the low $\sigma$ limit), $Y$ would be projected onto the hyperplane $(\theta^* - \Pi_{\mathcal{C}}(\theta^*))^\perp$,
producing the orthogonality required for the Pythagorean inequality \eqref{equation:pythagorean_inequality} to become an equality.
In the general case, the Pythagorean inequality is not tight,
and we explicitly see from this example that even in the low noise limit the the excess risk can be strictly larger than the misspecified risk.

Note that in contrast to the corresponding well-specified case
$Y \sim N(\Pi_\C(\theta^*),\sigma^2 I_n)$ which has limit $n-\frac{1}{2}$,
the misspecified limits $\frac{n-1}{\norm{\theta^*}^2}$ and $\frac{n-1}{\norm{\theta^*}}$ both tend to $n-1$ as $\norm{\theta^*} \downarrow 1$, so there is a ``jump'' in the limits between the misspecified and well-specified setting.
This is also a feature of \autoref{theorem:lb_polyhedron} when the
polyhedron $\C$ has nonempty interior, as we discussed earlier (see
\autoref{lemma:jump}).

This example shows that \autoref{theorem:lb_polyhedron} does not hold
for nonpolyhedral constraint sets $\C$, as the two normalized risks are
not equal in this particular example of the unit ball, and moreover
neither limit equals
\begin{equation}
\delta(T_\C(\Pi_\C(\theta^*)) \cap (\theta^* -
\Pi_\C(\theta^*))^\perp)
= \delta(\{u : \inner{u,\theta^*} \le 0\}\cap (\theta^*)^\perp)
= \delta((\theta^*)^\perp)
= n-1.
\end{equation}
The intuition for \autoref{theorem:lb_polyhedron} is that, in
the polyhedral case, the projections of $Y$ largely end up in some
face of the polyhedron $\C$, which can be approximated by a
lower-dimensional cone, for which the statistical dimension is well
defined. When $\C$ is not polyhedral, the generalization of this
``face'' is hard to conceptualize
and is likely not well approximated by a cone, so a statistical dimension can not be even applied.
Indeed, for general $\C$ such as the ball, tangent cones are extremely poor approximations for the set. Contrary to this drawback, the result \eqref{equation:oymakhassibi} of \citet{oymak2013sharp} shows that tangent cones are good enough for the well-specified setting. However for the misspecified setting, we expect that any general result for the low $\sigma$ limits does not involve a statistical dimension of some cone, since the surface of $\C$ is the essential object of interest and cannot be approximated by some cone except in special settings like the polyhedral case.

As mentioned already, \autoref{theorem:lb_polyhedron} shows that in the misspecified setting, the upper bound \eqref{equation:bellec_ub}, which holds for all $\sigma$, is not tight in the low $\sigma$ limit. One might ask whether a better upper bound for all $\sigma$ can be achieved, but \autoref{figure:sigma_trace_ball} shows that for some values of $\sigma$ the risks can be close to the upper bound, represented by the solid horizontal line.
We observed this behavior in other examples (see also \autoref{figure:sigma_trace_orthant}): the risks can be close to the upper bound for some moderate values of $\sigma$, and then converge to the strictly smaller low $\sigma$ limit.
Replacing the upper bound \eqref{equation:bellec_ub}, which is constant in $\sigma$,
with a $\sigma$-dependent upper bound would be an interesting result, but it would have to be extremely dependent on the geometry of the set $\C$.
In the following sections we further discuss the normalized risks as a function of $\sigma$.

\begin{figure}[!ht]
\centering
\includegraphics[width=0.3\textwidth]{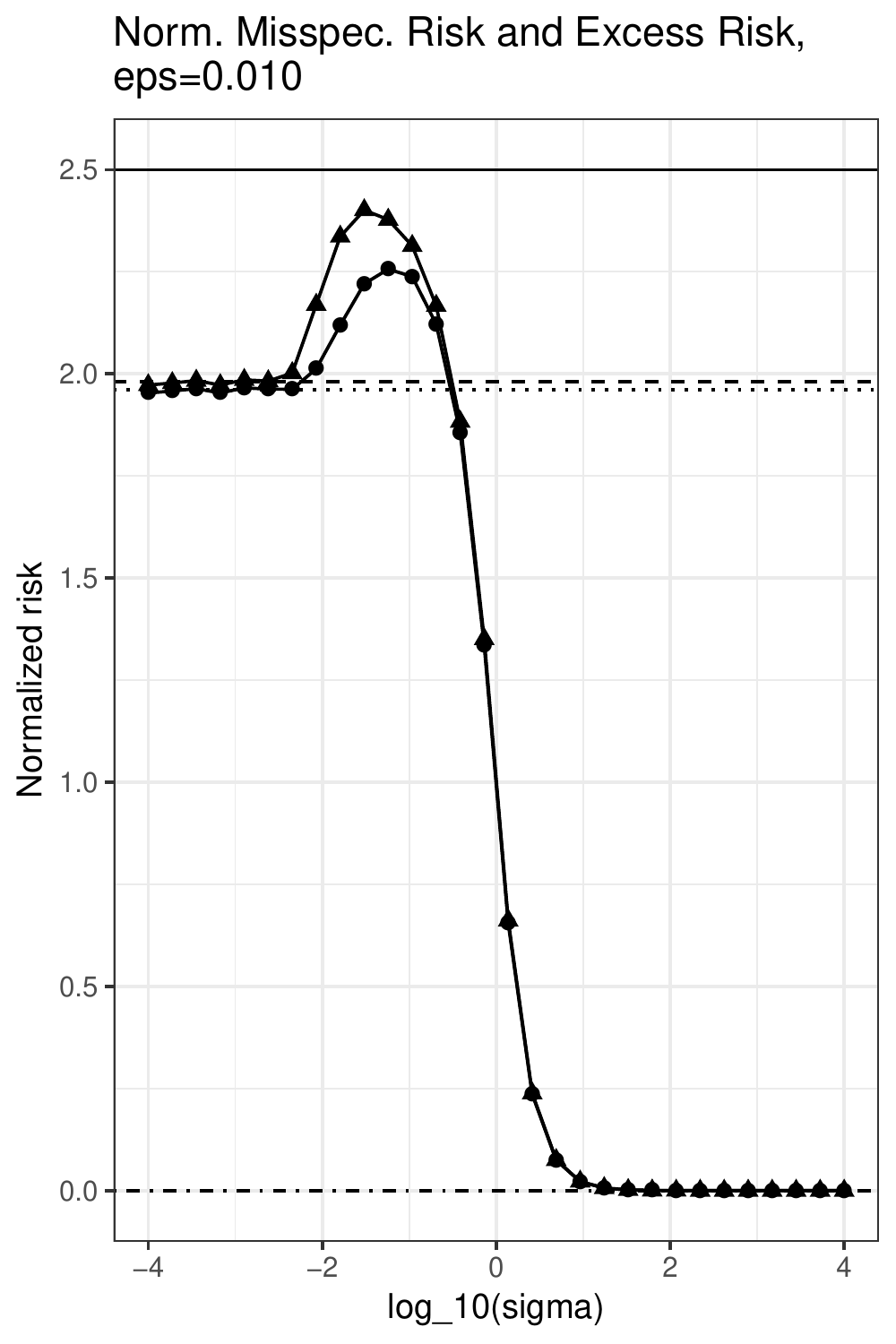}\;
\includegraphics[width=0.3\textwidth]{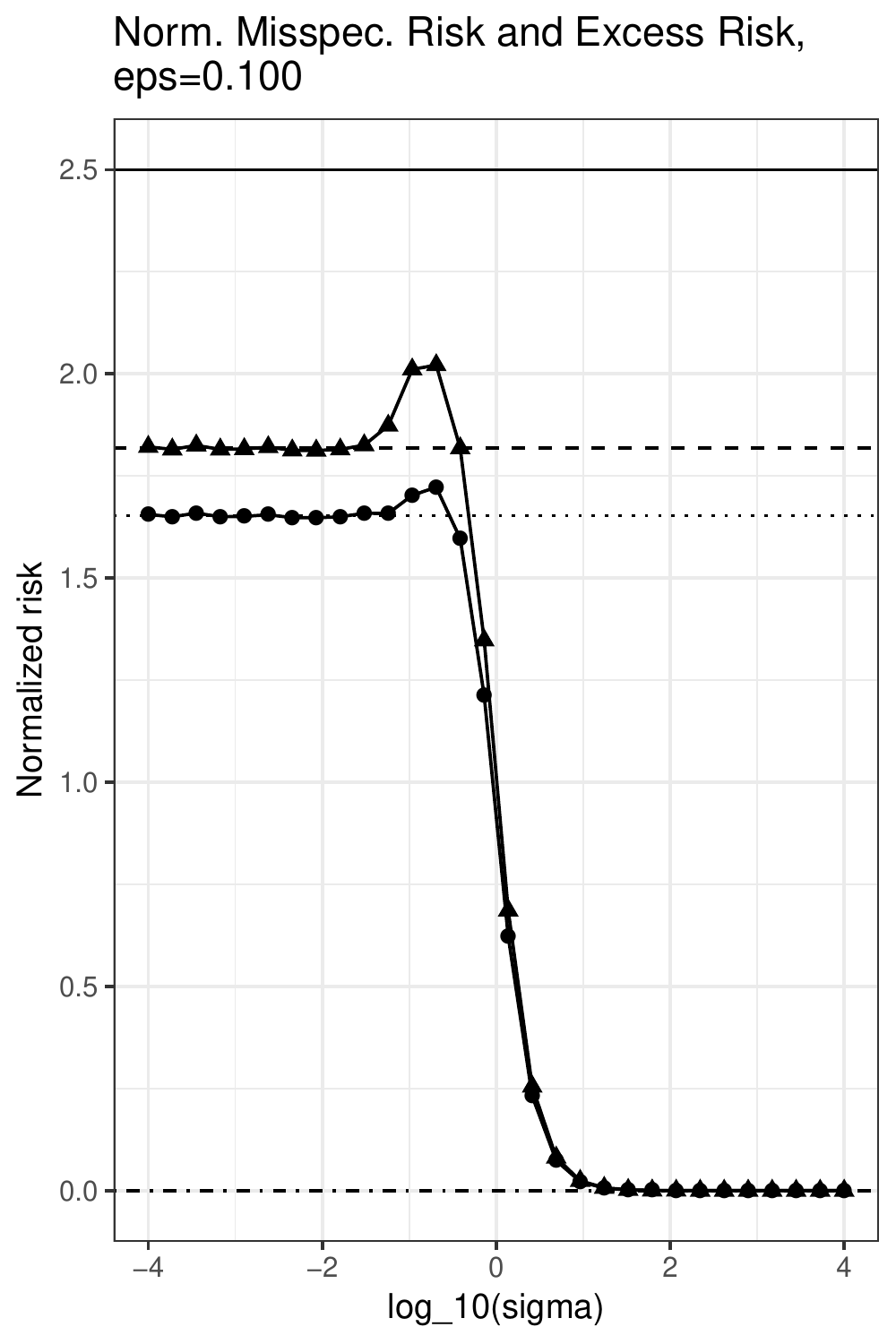}\;
\includegraphics[width=0.3\textwidth]{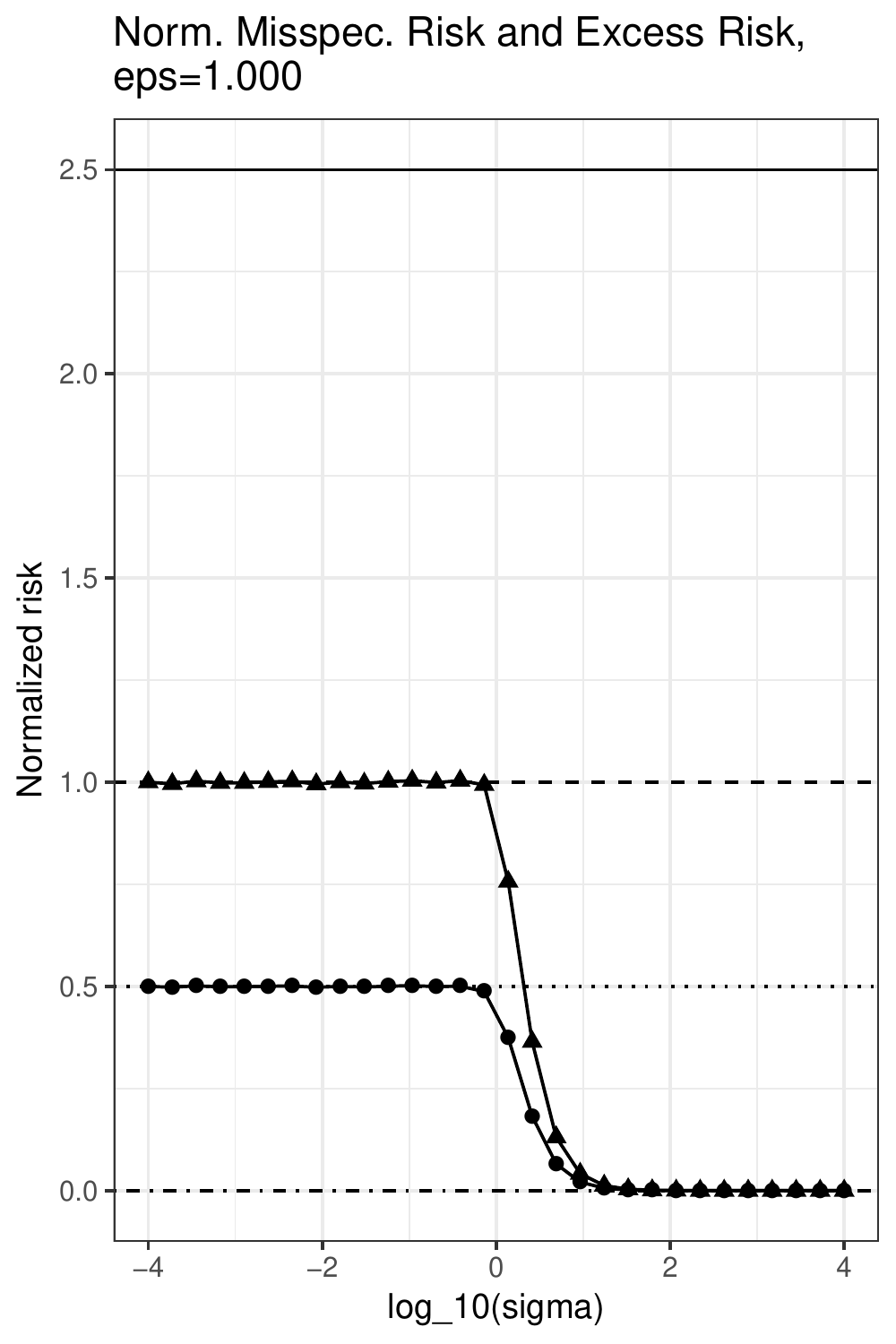}
\caption{
    Empirical estimates of the normalized misspecified risk ($\bullet$)
    and normalized excess risk ($\blacktriangle$) plotted against $\log_{10} (\sigma)$, for the ball $\C=\{\theta \in \R^n : \norm{\theta} \le 1\}$ in the case $n=3$ with $\theta^* = (1+\epsilon,0,0)$ and $\epsilon \in \{0.01, 0.1, 1\}$.
    The solid horizontal line represents the upper bound $\delta(T_\C(\Pi_\C(\theta^*)))=n-\frac{1}{2}=2.5$ guaranteed by \eqref{equation:bellec_ub}.
    The dotted lines and dashed lines are the predicted low $\sigma$ limits $\frac{n-1}{(1+\epsilon)^2}$
    and $\frac{n-1}{1+\epsilon}$ respectively.
    The dash-dot line is the high $\sigma$ limit $0$.
}
\label{figure:sigma_trace_ball}
\end{figure}

\subsection{High noise limit}\label{subsection:high_noise_limit}

Although not interesting in its own right, the high noise limit of the normalized risks
can help characterize the maximum risk as we discuss in the following section.
Proofs for this section appear in \autoref{section:proofs_high_noise_limit}.

For a closed convex set $\C$ we define the \textit{core cone}
\begin{equation}\label{equation:def_core_cone}
K_\C \coloneqq \bigcap_{\theta \in \C} T_\C(\theta).
\end{equation}
Recall the notation for the re-centered set
$F_\C(\theta_0)=\{\theta-\theta_0 : \theta \in \C\}$ where $\theta_0 \in \C$.
For a vector $v \in \R^n$ we let $\R_+ v \coloneqq \{\alpha u : \alpha \ge 0\}$.
We have the following equivalent characterizations of the core cone.

\begin{lemma}[Characterizations of the core cone]\label{lemma:core_cone}
Let $\C \subseteq \R^n$ be a closed convex set.
For any $\theta_0 \in \C$,
\begin{equation}
K_\C
\overset{(i)}{=}
\{v : \R_+ v \subseteq F_\C(\theta_0)\}
\overset{(ii)}{=}
\bigcap_{\sigma > 0} \frac{F_\C(\theta_0)}{\sigma}.
\end{equation}
Additionally, the inclusion $K_\C \subseteq T_\C(\theta)$
holds for any $\theta \in \C$.
If furthermore $F_\C(\theta_0)$ is a cone, then the equality $K_\C = T_\C(\theta)$ holds
if and only if $\theta_0-(\theta - \theta_0) \in \C$; in particular, taking $\theta = \theta_0$ shows that $K_\C = T_\C(\theta_0) = F_\C(\theta_0)$.
\end{lemma}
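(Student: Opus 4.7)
The key observation is that both right-hand sides in (i) and (ii) are rewritings of the \emph{recession cone} $R(\theta_0) \coloneqq \{v : \theta_0 + \alpha v \in \C \text{ for all } \alpha \ge 0\}$: part (i) is literally $R(\theta_0)$, and (ii) unpacks as $\{v : \theta_0 + \sigma v \in \C \text{ for all } \sigma > 0\}$, which agrees with $R(\theta_0)$ because $\theta_0 \in \C$ handles $\sigma = 0$. Thus the lemma reduces to proving $K_\C = R(\theta_0)$ for some (hence any) $\theta_0 \in \C$.

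For the easy direction $R(\theta_0) \subseteq K_\C$, I plan to first transfer the ray property from $\theta_0$ to every $\theta \in \C$: given $v \in R(\theta_0)$ and any $\theta \in \C$, the convex combination $(1 - 1/t)\theta + (1/t)(\theta_0 + t v) = \theta + v + (\theta_0 - \theta)/t$ lies in $\C$ for each $t \ge 1$, and letting $t \to \infty$ with the closedness of $\C$ gives $\theta + v \in \C$. Hence $v = (\theta + v) - \theta \in F_\C(\theta) \subseteq T_\C(\theta)$ for every $\theta$, so $v \in K_\C$. The intermediate inclusion $K_\C \subseteq T_\C(\theta)$ for any $\theta \in \C$ is immediate from $K_\C$ being defined as an intersection.

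The main obstacle is the reverse inclusion $K_\C \subseteq R(\theta_0)$, for which I plan a projection trick. Given $v \in K_\C$, it suffices to establish $\theta_0 + v \in \C$ for any $\theta_0 \in \C$: iterating (with the new base point $\theta_0 + v \in \C$) then gives $\theta_0 + n v \in \C$ for every integer $n \ge 0$, and convexity fills in the whole ray. To show $\theta_0 + v \in \C$, let $\theta^* \coloneqq \Pi_\C(\theta_0 + v)$ and aim to prove $\theta^* = \theta_0 + v$. Applying the optimality condition \eqref{equation:optimality condition} with $z = \theta_0$ yields $\inner{\theta^* - \theta_0, v} \ge \|\theta^* - \theta_0\|^2$. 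Since $v \in T_\C(\theta^*)$, write $v = \lim_n \beta_n(\theta_n - \theta^*)$ with $\theta_n \in \C$ and $\beta_n \ge 0$; applying \eqref{equation:optimality condition} with $z = \theta_n$, multiplying by $\beta_n$, and passing to the limit gives $\inner{v, \theta_0 + v - \theta^*} \le 0$, i.e.\ $\inner{\theta^* - \theta_0, v} \ge \|v\|^2$. Combined with Cauchy--Schwarz, these sandwich to $\|v\| \|\theta^* - \theta_0\| \ge \inner{\theta^* - \theta_0, v} \ge \max(\|v\|^2, \|\theta^* - \theta_0\|^2)$, forcing $\|v\| = \|\theta^* - \theta_0\|$ with equality in Cauchy--Schwarz. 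Hence $v = \theta^* - \theta_0$, i.e.\ $\theta^* = \theta_0 + v \in \C$.

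For the last assertion, suppose $K \coloneqq F_\C(\theta_0)$ is a closed convex cone, so $\C = \theta_0 + K$. Then $K_\C$, now known to equal the recession cone of $\C$, equals the recession cone of $K$, which is $K$ itself; thus $K_\C = F_\C(\theta_0)$. By translation, $T_\C(\theta) = T_K(u)$ where $u \coloneqq \theta - \theta_0 \in K$. Taking $z = 0 \in K$ with $\alpha = 1$ in the defining formula of $T_K(u)$ shows $-u \in T_K(u)$, so $T_K(u) = K$ forces $-u \in K$, i.e.\ $\theta_0 - (\theta - \theta_0) \in \C$. Conversely, if $-u \in K$, then for every $z \in K$ and $\alpha \ge 0$, $\alpha(z - u) = \alpha z + \alpha(-u) \in K$ (as convex cones are closed under addition and nonnegative scaling), and closedness of $K$ yields $T_K(u) \subseteq K$; combined with the reverse inclusion $K = K_\C \subseteq T_K(u)$, this gives $T_K(u) = K$. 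The case $\theta = \theta_0$ has $u = 0$, trivially $-u \in K$, so $K_\C = T_\C(\theta_0) = F_\C(\theta_0)$.
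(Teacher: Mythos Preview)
Your proof is correct. For equality (ii), the easy inclusion $R(\theta_0)\subseteq K_\C$, and the final cone characterization, your arguments run in parallel with the paper's (your convex-combination parameterization $(1-1/t)\theta + (1/t)(\theta_0+tv)$ is in fact slightly cleaner, since it lands $\theta+v$ directly in $\C$ rather than only in $T_\C(\theta)$).

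The substantive difference is the hard inclusion $K_\C\subseteq R(\theta_0)$. The paper sets $c^*=\sup\{c>0:\theta_0+cv\in\C\}$ and argues by contradiction that $c^*=\infty$, invoking ``$v\in T_\C(\theta_0)$'' and ``$v\in T_\C(\theta_0+c^*v)$'' to assert that points of the form $\theta_0+cv$ lie in $\C$; this step is delicate because the tangent cone carries a closure in its definition, so membership in $T_\C(\theta)$ does not by itself place $\theta+\alpha v$ in $\C$ for any $\alpha>0$. Your projection argument bypasses this issue altogether: projecting $\theta_0+v$ onto $\C$ and applying the variational inequality \eqref{equation:optimality condition} once at $z=\theta_0$ and once along tangent-cone approximants of $v$ at the projected point yields the two-sided sandwich that forces $\Pi_\C(\theta_0+v)=\theta_0+v$. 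This is a genuinely different and more self-contained route, relying only on the optimality condition and the limit definition of $T_\C$, and it never needs to assume a priori that any point on the ray $\theta_0+\R_+v$ already lies in $\C$.
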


Thus, up to a translation, the core cone can either be viewed as the result of shrinking $\C$ radially toward $\theta_0 \in \C$, or as the largest cone centered at $\theta_0 \in \C$ that is contained in $\C$. An interesting point is that $\theta_0 \in \C$ can be chosen arbitrarily.

Furthermore, in case when $\C$ is a cone, the core cone $K_\C$ is this cone $\C$, and we can characterize which tangent cones are the ``smallest'' in the sense that they equal the intersection \eqref{equation:def_core_cone} of all tangent cones.

The following result shows that under a boundedness condition, the core cone characterizes both high $\sigma$ limits.

\begin{proposition}[High noise limit]\label{proposition:high_sigma_limit}
Let $\C$ be a closed convex set.
Let $\theta^* \in \R^n$
and $Y \coloneqq \theta^* + \sigma Z$
where $Z$ is a zero mean random vector with $\E \norm{Z}^2 < \infty$.
If the condition
\begin{equation}\label{equation:boundedness}
\sup_{x \in \R^n} \left( \norm{\Pi_{F_\C(\Pi_\C(\theta^*))}(x)}^2
- \norm{\Pi_{K_\C}(x)}^2 \right) < \infty.
\end{equation}
holds,
then
\begin{equation}
\lim_{\sigma \to \infty} \frac{1}{\sigma^2}
M(\hat{\theta}, \theta^*)
= \lim_{\sigma \to \infty} \frac{1}{\sigma^2}
E(\hat{\theta}, \theta^*)
= \delta(K_\C).
\end{equation}
\end{proposition}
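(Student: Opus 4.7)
The strategy is to rescale by $1/\sigma$ so that the sets $F_\C(\Pi_\C(\theta^*))/\sigma$ shrink toward the core cone $K_\C$, and then apply dominated convergence. First I would set $\theta_0 \coloneqq \Pi_\C(\theta^*)$, $v \coloneqq \theta^* - \theta_0$, and $G \coloneqq F_\C(\theta_0)$. Using the translation identity $\Pi_\C(Y) - \theta_0 = \Pi_G(Y - \theta_0) = \Pi_G(\sigma Z + v)$ together with the scaling $\Pi_G(\sigma x) = \sigma \Pi_{G/\sigma}(x)$, one obtains
\begin{equation*}
\frac{1}{\sigma^2} M(\hat{\theta}, \theta^*) = \E \norm{\Pi_{G/\sigma}(Z + v/\sigma)}^2.
\end{equation*}

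The next step is to identify the pointwise limit of the integrand. By characterization (ii) of \autoref{lemma:core_cone}, the family $G/\sigma$ is monotone decreasing in $\sigma$ (since $G$ is convex and contains $0$) with intersection $K_\C$. The standard fact that projections onto a decreasing family of closed convex sets with nonempty intersection converge to the projection onto the intersection, combined with the nonexpansiveness estimate $\norm{\Pi_{G/\sigma}(Z + v/\sigma) - \Pi_{G/\sigma}(Z)} \le \norm{v}/\sigma$ to absorb the vanishing perturbation, yields $\Pi_{G/\sigma}(Z + v/\sigma) \to \Pi_{K_\C}(Z)$ pointwise as $\sigma \to \infty$.

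The main obstacle is producing an integrable dominating function, and this is precisely where the boundedness hypothesis \eqref{equation:boundedness} enters. Reading the hypothesis as $\norm{\Pi_G(x)}^2 \le \norm{\Pi_{K_\C}(x)}^2 + C$ for some constant $C$ and all $x$, and then invoking the cone property $\Pi_{K_\C}(\sigma z) = \sigma \Pi_{K_\C}(z)$ together with nonexpansiveness to control $\Pi_{K_\C}(\sigma Z + v)$, one obtains for $\sigma \ge 1$ the bound
\begin{equation*}
\frac{1}{\sigma^2} \norm{\Pi_G(\sigma Z + v)}^2
\le 2 \norm{\Pi_{K_\C}(Z)}^2 + \frac{2\norm{v}^2 + C}{\sigma^2}
\le 2 \norm{Z}^2 + C',
\end{equation*}
which is integrable since $\E \norm{Z}^2 < \infty$. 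Dominated convergence then gives $\sigma^{-2} M(\hat{\theta}, \theta^*) \to \E \norm{\Pi_{K_\C}(Z)}^2 = \delta(K_\C)$.

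Finally, for the excess risk, expanding $\norm{\Pi_\C(Y) - \theta^*}^2 = \norm{\Pi_G(\sigma Z + v) - v}^2$ gives $E(\hat{\theta},\theta^*) - M(\hat{\theta},\theta^*) = -2 \E \inner{\Pi_G(\sigma Z + v), v}$, which is nonnegative by the optimality condition characterizing $\Pi_\C(\theta^*)$. The Cauchy--Schwarz inequality then yields
\begin{equation*}
\frac{1}{\sigma^2} \abs{E(\hat{\theta}, \theta^*) - M(\hat{\theta}, \theta^*)} \le \frac{2\norm{v}}{\sigma} \sqrt{M(\hat{\theta}, \theta^*)/\sigma^2},
\end{equation*}
which tends to $0$ as $\sigma \to \infty$ since the factor under the square root is bounded by the preceding step. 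Hence $E(\hat{\theta}, \theta^*)/\sigma^2$ shares the same limit $\delta(K_\C)$, completing the argument.
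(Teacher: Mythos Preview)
Your proof is correct and shares the paper's overall template (dominated convergence to swap limit and expectation, identification of the pointwise limit, then a Cauchy--Schwarz bound on the cross term for the excess risk), but the two arguments deploy the boundedness hypothesis \eqref{equation:boundedness} at opposite ends. The paper obtains the dominating function trivially from nonexpansiveness via \eqref{equation:dom_conv}, which already gives $\sigma^{-2}\norm{\Pi_\C(Y)-\Pi_\C(\theta^*)}^2 \le \norm{Z}^2$ without any hypothesis; it then spends \eqref{equation:boundedness} on the pointwise limit through the triangle-inequality splitting
\[
\abs*{\tfrac{1}{\sigma^2}\norm{\Pi_\C(Y)}^2 - \norm{\Pi_{K_\C}(Z)}^2}
\le \tfrac{1}{\sigma^2}\abs*{\norm{\Pi_\C(Y)}^2 - \norm{\Pi_{K_\C}(Y)}^2}
+ \abs*{\norm{\Pi_{K_\C}(\theta^*/\sigma + Z)}^2 - \norm{\Pi_{K_\C}(Z)}^2}.
\]
You do the reverse: your pointwise limit comes hypothesis-free from the Mosco-type fact that projections onto the decreasing family $G/\sigma$ converge to the projection onto $\bigcap_\sigma G/\sigma = K_\C$, and you instead spend \eqref{equation:boundedness} to build a dominating function. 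That construction is more elaborate than needed, since the paper's $\norm{Z}^2$ bound is available to you as well. A curious byproduct is that grafting your Mosco pointwise argument onto the paper's nonexpansiveness dominant appears to make hypothesis \eqref{equation:boundedness} unnecessary altogether; this is worth checking carefully against the paper's numerical remark on the parabola $\{u_2 \ge u_1^2\}$, where the convergence may simply be slow.
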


The main hurdle in applying \autoref{proposition:high_sigma_limit} is
verifying the condition \eqref{equation:boundedness}. The following
result covers two cases where it is easy to verify the
condition.

\begin{corollary}[Orthant and bounded sets]\label{corollary:high_sigma_limit_ex}
Let $\theta^* \in \R^n$ and $Y \sim N(\theta^*,\sigma^2 I_n)$.

\begin{itemize}
    \item If $\C = \R^n_+$ is the nonnegative orthant, then the high $\sigma$ limits are $\delta(\R^n_+)=n/2$.
    \item Let $\C$ be a closed convex set.
    $K_\C=\{0\}$ if and only if $\C$ is bounded, in which case both high $\sigma$ limits are $0$.
\end{itemize}
\end{corollary}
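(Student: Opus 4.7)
The plan is to apply \autoref{proposition:high_sigma_limit} to both cases, so the main work is to identify the core cone $K_\C$ explicitly and then verify the boundedness hypothesis \eqref{equation:boundedness}.

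For the orthant case, since $\R^n_+$ is itself a cone, I can take $\theta_0=0$ in \autoref{lemma:core_cone} and apply the last statement of that lemma to conclude $K_{\R^n_+} = F_{\R^n_+}(0) = \R^n_+$. To verify \eqref{equation:boundedness}, set $\theta_0 \coloneqq \Pi_{\R^n_+}(\theta^*)$, which is the coordinatewise positive part of $\theta^*$. Then $F_{\R^n_+}(\theta_0)$ is the shifted orthant $\prod_{i=1}^n [-\theta_{0,i},\infty)$, and coordinatewise projection gives $(\Pi_{F_{\R^n_+}(\theta_0)}(x))_i = \max(x_i,-\theta_{0,i})$, while $(\Pi_{K_\C}(x))_i = \max(x_i,0)$. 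On each coordinate the difference $\max(x_i,-\theta_{0,i})^2-\max(x_i,0)^2$ is zero when $x_i\ge 0$ and bounded by $\theta_{0,i}^2$ when $x_i<0$, so summing over $i$ yields a bound of $\norm{\theta_0}^2<\infty$, uniformly in $x$. Applying \autoref{proposition:high_sigma_limit} gives that both high $\sigma$ limits equal $\delta(\R^n_+) = n \cdot \delta(\R_+) = n/2$, using that coordinates of $Z\sim N(0,I_n)$ are i.i.d.\ symmetric and $\E[\max(Z_1,0)^2]=\tfrac12$.

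For the bounded case, I first establish the equivalence $K_\C=\{0\}\iff \C$ bounded. If $\C$ is bounded, then for any $v\neq 0$ the ray $\R_+ v$ is unbounded and cannot lie inside $F_\C(\theta_0)$, so characterization (i) of \autoref{lemma:core_cone} gives $K_\C=\{0\}$. Conversely, if $\C$ is unbounded, fix any $\theta_0\in\C$ and pick a sequence $\theta_k\in\C$ with $\norm{\theta_k-\theta_0}\to\infty$; by compactness of the unit sphere a subsequence of the normalized directions $(\theta_k-\theta_0)/\norm{\theta_k-\theta_0}$ converges to some unit vector $v$. For any fixed $\alpha\ge 0$, eventually $\alpha\le\norm{\theta_k-\theta_0}$, so convexity of $\C$ gives $\theta_0+\alpha(\theta_k-\theta_0)/\norm{\theta_k-\theta_0}\in\C$; passing to the limit and using closedness of $\C$ yields $\theta_0+\alpha v\in\C$, i.e.\ $\R_+v\subseteq F_\C(\theta_0)$, so $v\in K_\C\setminus\{0\}$, contradicting $K_\C=\{0\}$.

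Given the equivalence, when $\C$ is bounded the boundedness condition \eqref{equation:boundedness} is trivial: $F_\C(\Pi_\C(\theta^*))$ is a bounded set of diameter $\mathrm{diam}(\C)$, so $\norm{\Pi_{F_\C(\Pi_\C(\theta^*))}(x)}$ is uniformly bounded in $x$, while $\Pi_{K_\C}(x)=\Pi_{\{0\}}(x)=0$. Then \autoref{proposition:high_sigma_limit} applies and both high $\sigma$ limits equal $\delta(\{0\})=0$. I don't anticipate serious obstacles here; the only step that requires any care is the uniform bound in the orthant case, which I handle coordinatewise as described above.
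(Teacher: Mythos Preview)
Your proposal is correct and follows essentially the same route as the paper: apply \autoref{proposition:high_sigma_limit} after identifying $K_\C$ and verifying \eqref{equation:boundedness}, with the identical coordinatewise bound $\norm{\theta_0}^2$ in the orthant case and the trivial bound in the bounded case. The only minor difference is in showing that an unbounded closed convex set contains a ray: you use sequential compactness of the sphere (extract a convergent subsequence of normalized directions and pass to the limit), whereas the paper uses an open-cover argument on $S^{n-1}$; both are standard and equally valid.
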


\autoref{figure:sigma_trace_ball} and \autoref{figure:sigma_trace_orthant}
illustrate the result of this corollary.

Verifying \eqref{equation:boundedness} for more general $\C$ is more difficult.
We believe it might hold for polyhedral cones with any $\theta^*$, in
which case \autoref{proposition:high_sigma_limit} would imply that the
high $\sigma$ limits are $\delta(\C)$.
An interesting feature of the examples presented thus far is that the
high $\sigma$ limits (including the veracity of
\eqref{equation:boundedness}) do not depend on $\theta^*$.

\begin{remark}\label{remark:cone_case}
More generally, suppose $\C$ is a general cone.
By applying \autoref{lemma:core_cone} with $\theta_0 = 0$ and $\theta = \Pi_\C(\theta^*)$,
we observe that the core cone $K_\C$ is $\C$,
and moreover $\C \subseteq T_\C(\Pi_\C(\theta^*))$,
with equality if and only if $-\Pi_\C(\theta^*) \in \C$.
Thus, if the condition \eqref{equation:boundedness} holds,
then \autoref{proposition:high_sigma_limit} implies the high $\sigma$ limits are $\delta(\C)$,
and moreover \autoref{lemma:core_cone} implies that these limits
equal Bellec's upper bound \eqref{equation:bellec_ub},
$\delta(T_\C(\Pi_\C(\theta^*)))$,
if and only if
$\theta^*$ satisfies $-\Pi_\C(\theta^*) \in \C$.
\end{remark}

However, the condition \eqref{equation:boundedness} does not hold for all $\C$.
One can verify numerically that the epigraph $\C \coloneqq \{u \in \R^2 : u_2 \ge u_1^2\}$,
whose core cone is $K_\C = \{(0,u_2) : u_2 \ge 0\}$, does not satisfy \eqref{equation:boundedness}.
Simulations also show that the high $\sigma$ limits are larger than $\delta(K_\C)=1/2$.
In general, it is unclear exactly when
the core cone does or does not characterize the high $\sigma$ limits.

\subsection{Maximum normalized risk}\label{subsection:max_risk}
Our low and high $\sigma$ limit results \autoref{theorem:lb_polyhedron}
and \autoref{proposition:high_sigma_limit}
provides an incomplete characterization of the maximum normalized risks
\eqref{equation:max_risks}.
As mentioned already in \eqref{equation:bellec_ub},
$\delta(T_\C(\Pi_\C(\theta^*)))$ is an upper bound for both suprema.

In the well-specified case $\theta^* \in \C$, both suprema reduce to the usual normalized risk
$\sigma^{-2} R(\hat{\theta},\theta^*)$;
moreover the upper bound becomes $\delta(T_\C(\theta^*))$, and is attained as $\sigma \downarrow 0$
by the result \eqref{equation:oymakhassibi} of \citet{oymak2013sharp}.

However, in the misspecified case we have shown in \autoref{theorem:lb_polyhedron}
that in general
the low $\sigma$ limit does not attain the upper bound \eqref{equation:bellec_ub}.
Moreover, simulations show that in some cases even the suprema do not attain the upper bound;
see \autoref{figure:sigma_trace_ball} and \autoref{figure:sigma_trace_orthant}. We see that for some cases the suprema are close to the upper bound, but for others it is much smaller.

\begin{figure}[!ht]
\centering
\includegraphics[width=0.3\textwidth]{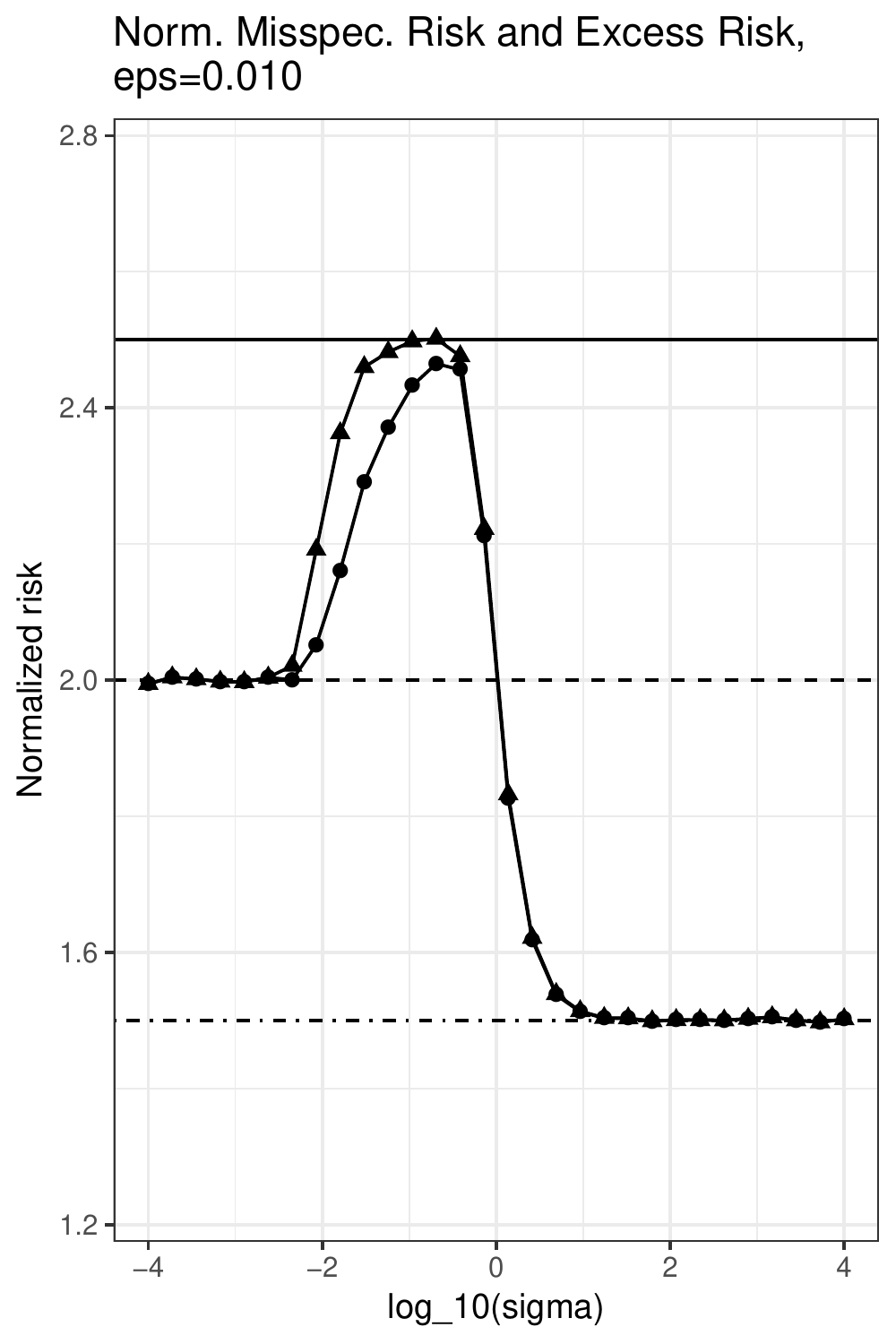}\;
\includegraphics[width=0.3\textwidth]{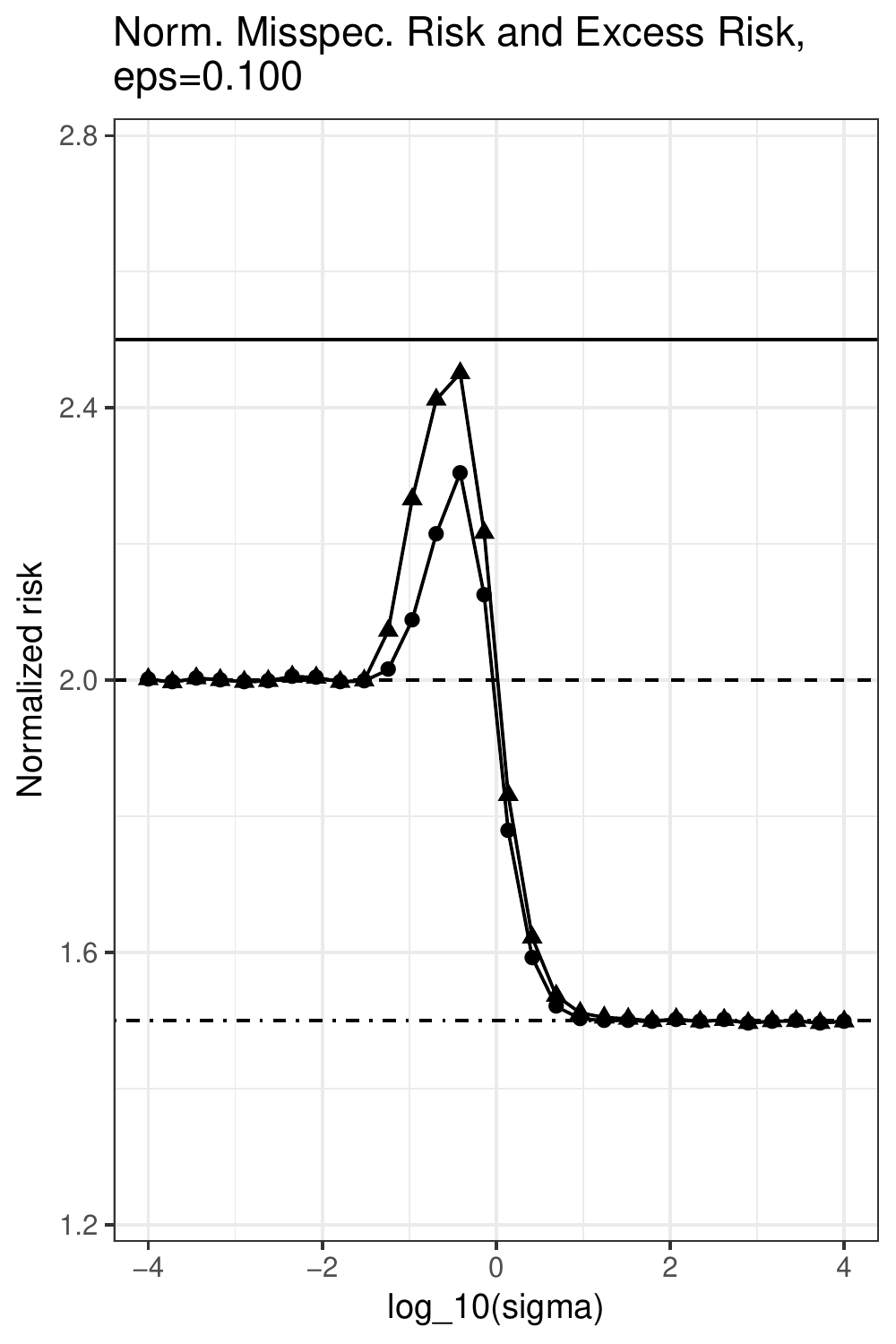}\;
\includegraphics[width=0.3\textwidth]{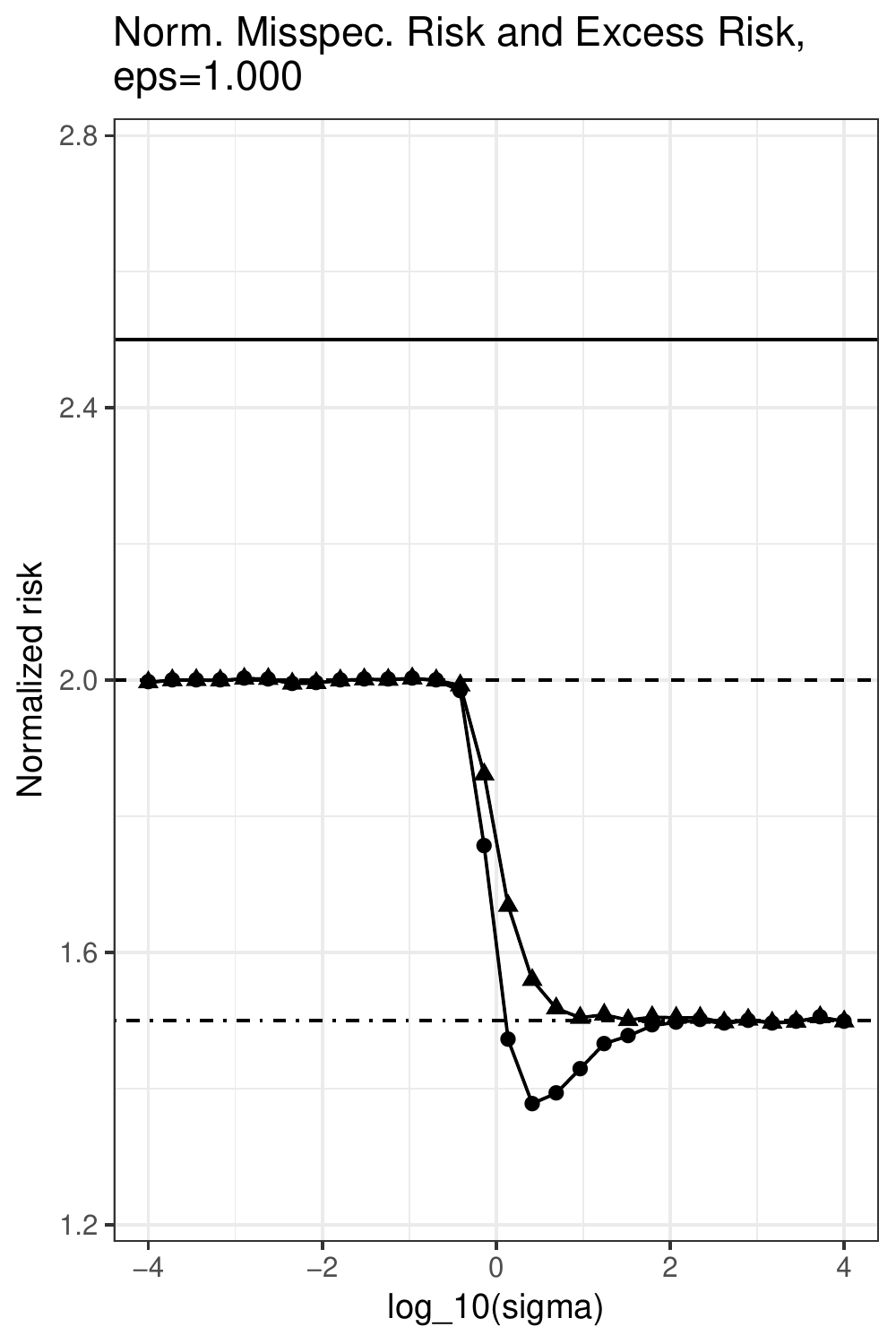}
\caption{
    Empirical estimates of the normalized misspecified risk ($\bullet$)
    and normalized excess risk ($\blacktriangle$) plotted against $\log_{10} (\sigma)$, for the orthant $\C \coloneqq \R^3_+$ and $\theta^*=(1,1,-\epsilon)$ with $\epsilon \in \{0.01, 0.1, 1\}$.
    The solid horizontal line represents the upper bound
    $\delta(T_{\C}(\Pi_\C(\theta^*)))=n-\frac{1}{2}$ guaranteed by
    \eqref{equation:bellec_ub}.
    The dashed line is the common low $\sigma$ limit $n-1$ (see \autoref{corollary:orthant}).
    The dash-dot line is the high $\sigma$ limit $\delta(\R^n_+)=3/2$.
}
\label{figure:sigma_trace_orthant}
\end{figure}

Of course, if one can show that either the low $\sigma$ limit or the high $\sigma$ limit is equal to the upper bound $\delta(T_\C(\Pi_\C(\theta^*)))$, then we know the upper bound is attained either as $\sigma \downarrow 0$ or $\sigma \to \infty$ respectively.
However, in the settings of \autoref{theorem:lb_polyhedron} and \autoref{proposition:high_sigma_limit}, this seldom happens. As discussed already, if $\C$ is polyhedral with nonempty interior, then the low $\sigma$ limit is strictly smaller than the upper bound. If \autoref{proposition:high_sigma_limit} applies, then $K_\C = \bigcap_{\theta \in \C} T_\C(\theta) \subseteq T_\C(\Pi_\C(\theta^*))$ shows that the high $\sigma$ limit is typically strictly smaller than the upper bound; for the special case where $\C$ is a cone, see \autoref{remark:cone_case} for a necessary and sufficient condition for the high $\sigma$ limit to equal the upper bound.

Thus in most cases the suprema are attained at some moderate values of $\sigma$,
but it is difficult to provide a characterization of these maximizing values $\sigma$,
as well as the value of the suprema and whether they are close to the upper bound or not.
The plots suggest that as $\theta^*$ gets closer to $\C$, the suprema get closer to the upper bound as well.

\appendix

\section{Proofs of lemmas in \autoref{section:lb_polyhedron}}
\label{section:lemmas_lb_polyhedron}

The next lemma is a technical device for representing the largest face of a polyhedral cone that lies in a particular hyperplane. It is useful for proving \autoref{lemma:hyperplane_intersection_face} and \autoref{lemma:project_hyperplane}.

\begin{lemma}[Largest face in hyperplane]\label{lemma:largest_face}
Let $\K = \{u : Au \le 0\} \subseteq \R^n$ be a polyhedral cone, where
$A \in \R^{m \times n}$ has distinct rows. For each $y \in \R^n$,
consider the subsets $J \subseteq \{1,\ldots,m\}$ satisfying
\begin{equation}\label{equation:indices_condition}
\{u : A_J u = 0\} \subseteq (y - \Pi_\K(y))^\perp.
\end{equation}
We let $J_y$ denote the smallest such subset.

This subset $J_y$ characterizes a face of $\K$ in the following way.
\begin{equation}\label{equation:largest_face2}
\K \cap (y-\Pi_\K(y))^\perp
=
\{u:A_{J_y} u=0, A_{J_y^c}u \le 0\}.
\end{equation}
\end{lemma}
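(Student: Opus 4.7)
The plan is to exploit the polar-cone description of $\K$ and the optimality characterization of $\Pi_\K(y)$. Set $v \coloneqq y - \Pi_\K(y)$. The cone optimality condition \eqref{equation:optimality_condition_cone} gives $\inner{z, v} \le 0$ for every $z \in \K$, so $v$ lies in the polar cone $\K^\circ$. For the polyhedral cone $\K = \{u : Au \le 0\}$, Farkas' lemma identifies $\K^\circ$ with the conical hull of the rows of $A$, so I can write
\begin{equation}
v = \sum_{j=1}^m \lambda_j a_j, \qquad \lambda_j \ge 0,
\end{equation}
and I would select such a representation of minimal support.

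Next I would let $J^* \coloneqq \{j : \lambda_j > 0\}$ and establish \eqref{equation:largest_face2} (with $J_y = J^*$) by a sign analysis. For any $u \in \K$, each $\inner{a_j, u} \le 0$, so
\begin{equation}
\inner{v, u} = \sum_{j=1}^m \lambda_j \inner{a_j, u} \le 0,
\end{equation}
with equality if and only if $\inner{a_j, u} = 0$ for every $j \in J^*$, i.e., $A_{J^*} u = 0$. Since $u \in \K$ also supplies $A_{(J^*)^c} u \le 0$, this yields
\begin{equation}
\K \cap v^\perp = \braces*{u : A_{J^*} u = 0,\; A_{(J^*)^c} u \le 0},
\end{equation}
which is precisely the asserted face representation. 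As a by-product, $J^*$ satisfies \eqref{equation:indices_condition}, because $A_{J^*} u = 0$ forces $\inner{v, u} = \sum_{j \in J^*} \lambda_j \inner{a_j, u} = 0$.

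Finally I would verify that $J^*$ is the smallest subset satisfying \eqref{equation:indices_condition}. If $J \subseteq \{1, \dots, m\}$ satisfies the condition, then taking orthogonal complements gives $v \in \op{span}\{a_j : j \in J\}$, so $J$ must index some linear representation of $v$. I would argue, using the pairwise non-parallel assumption on the rows of $A$ together with the minimal-support choice above, that $J^*$ is minimal among such subsets. The main obstacle here is that the conical representation of $v$ is genuinely non-unique in general, so ``the smallest'' $J_y$ should be understood as a minimal (rather than unique minimum) element in the subset order. However, the sign-analysis argument in the previous paragraph shows that the face equality \eqref{equation:largest_face2} holds for any such choice of $J_y = J^*$, which is all that the downstream applications (\autoref{lemma:hyperplane_intersection_face} and \autoref{lemma:project_hyperplane}) actually require.
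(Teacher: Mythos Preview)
Your route to the face identity \eqref{equation:largest_face2} is genuinely different from the paper's and, for that part, cleaner. The paper never invokes the polar-cone/Farkas representation $v=\sum_j \lambda_j a_j$; instead it argues the $\supseteq$ inclusion directly from the defining property of $J_y$, and for $\subseteq$ it fixes a point $w\in\K\cap v^\perp$, sets $J=\{j:\inner{a_j,w}=0\}$, and uses a perturbation (showing $w\pm cu\in\K$ for small $c>0$ whenever $A_J u=0$) together with the cone optimality condition to conclude that this $J$ satisfies \eqref{equation:indices_condition}, whence $J_y\subseteq J$ and so $A_{J_y}w=0$. Your sign analysis bypasses the perturbation step entirely and makes transparent that the face equality holds for the support of \emph{any} conical representation of $v$, not only a minimal one.

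On the well-definedness of $J_y$: the paper argues that the family of subsets satisfying \eqref{equation:indices_condition} is closed under intersection, giving a unique minimum. Your instinct that ``smallest'' should really be read as ``a minimal element'' is warranted, and your proposed appeal to the pairwise non-parallel hypothesis cannot be completed as stated. In $\R^2$ with rows $a_1=(1,0)$, $a_2=(0,1)$, $a_3=(1,1)$ (pairwise non-parallel) and $y=(1,1)$, one has $\Pi_\K(y)=0$, $v=(1,1)$, and both $\{3\}$ and $\{1,2\}$ satisfy \eqref{equation:indices_condition} while their intersection $\emptyset$ does not; so there is no unique smallest $J$ here. Your closing observation is the correct resolution: the downstream uses (\autoref{lemma:hyperplane_intersection_face} and \autoref{lemma:project_hyperplane}) only need the face description \eqref{equation:largest_face2} for \emph{some} valid $J$, and your argument supplies that.
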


\begin{proof}
The optimality condition for a projection onto a cone
\eqref{equation:optimality_condition_cone}
implies $\inner{y - \Pi_\K(y), u} \le 0$ for all $u \in \K$.
If $\K$ contains both $u$ and $-u$, then this implies $u \in (y-\Pi_\K(y))^\perp$. Thus for $J=\{1,\ldots,m\}$, \eqref{equation:indices_condition} holds because $\{u : A_J u = 0\} \subseteq \K$. This shows the existence of subsets $J$ that satisfy \eqref{equation:indices_condition}.

Next, note that if $J$ and $J'$ both satisfy \eqref{equation:indices_condition},
then $J\cap J'$ does as well, because
\begin{equation}
\{u : A_{J \cap J'} u = 0\}
= \{u+v : A_Ju = A_{J'}v=0\}
\subseteq (y - \Pi_\K(y))^\perp.
\end{equation}
So, letting $J_y$ be the intersection of all $J$ satisfying \eqref{equation:indices_condition}
yields the unique subset of minimal size.

The $\supseteq$ inclusion in \eqref{equation:largest_face2} follows immediately from $\{u : A_{J_y} u = 0\} \subseteq (y - \Pi_\K(y))^\perp$.
For the other inclusion, suppose $v \in \K \cap (y-\Pi_\K(y))^\perp$.
Then $Av \le 0$, so it remains to verify $A_{J_y}v=0$.
That is, if $J \subseteq \{1,\ldots,m\}$ denotes the indices $j$ for which $\inner{a_j,v}=0$,
we want to show $J_y \subseteq J$; furthermore, this reduces to showing $J$ satisfies \eqref{equation:indices_condition}, by minimality of $J_y$.

Any $u$ satisfying $A_J u = 0$ can be rewritten as $u=v+w$ for some $w$ also satisfying $A_J w =0$. There exists some $c>0$ such that both $v+cw$ and $v-cw$ are in $\K$ because all the linear constraints outside of $J$ are strict inequalities at $v$.
Then, the optimality condition for the projection onto a cone, yields
$\inner{v + cw, y-\Pi_\K(y)} \le 0$
and $\inner{v - cw, y-\Pi_\K(y)} \le 0$.
Since $v \in (y-\Pi_\K(y))^\perp$, this yields $w \in (y-\Pi_\K(y))^\perp$ and thus $u \in (y - \Pi_\K(y))^\perp$, which verifies that $J$ satisfies \eqref{equation:indices_condition}.
\end{proof}

\begin{proof}[Proof of \autoref{lemma:drop_constraints}]
By definition there exist an integer $m$, matrix $A \in \R^{m \times n}$, and vector
$b \in \R^m$ such that $\C \coloneqq \{u \in \R^n : Au \le b\}$.
Fix $\theta^* \in \R^n$ and let $\theta_0 \coloneqq \Pi_\C(\theta^*)$.
We will show
\begin{equation}
T_\C(\theta_0) = \{u : A_J u \le 0\},
\end{equation}
where $J = \{j : \inner{a_j, \theta_0} = b_j\}$.
Then $T_\C(\theta_0)$ is a polyhedral cone.

If $u \in T_\C(\theta_0)$ then for some $r^*>0$ we have $\theta_0 + r u \in \C$.
Thus,
$b_J \ge A_J(\theta_0 + r u) = b_J + r A_J u$
which implies $A_J u \le 0$.

Conversely, suppose $u$ satisfies $A_J u \le 0$. Choose $r^*>0$ so that
$r \inner{a_j, u} \le b_j - \inner{a_j, \theta_0}$ for all $j \notin J$.
This is possible because $b_j > \inner{a_j, \theta_0}$ for each $j \notin J$.
Then $\theta_0 +  r^* u \in \C$ so $u \in T_\C(\theta_0)$.

Finally, we need to prove the second part of the locally polyhedral condition \eqref{split:polyhedral_condition},
which will follow if we show
$T_\C(\theta_0) \cap B_{r^*}(0) \subseteq F_\C(\theta_0)$ for some $r>0$.
If $u \in T_\C(\theta_0)$ then $A_J u \le 0 = b_J - A_J \theta_0$,
so it suffices to find some $r$ such that $A_{J^c} u \le b_{J^c} - A_{J^c} \theta_0$
for any $u \in B_{r^*}(0)$.
For each $j \notin J$, we have $\inner{a_j,\theta_0}< b_j$ so there
exists some $r^*>0$ such that all $\theta \in B_{r^*}(\theta_0)$
satisfy $\inner{a_j,\theta} < b_j$ for all $j \notin J$. Taking
$u=\theta-\theta_0$ concludes the proof.
\end{proof}

\begin{proof}[Proof of \autoref{lemma:hyperplane_intersection_face}]
Let $\T \coloneqq \{u+\Pi_\C(\theta^*):u \in T_\C(\Pi_\C(\theta^*))\}$.
Using the locally polyhedral condition \eqref{split:polyhedral_condition}
and continuity \cite{hiriart2012fundamentals} of $\Pi_\C$ and $\Pi_\T$,
we have
$\Pi_\T(\theta^*)=\Pi_\C(\theta^*)$ (e.g., see the verification of \eqref{equation:projection_trick}),
and thus translating yields
$\Pi_{T_\C(\Pi_\C(\theta^*))}(\theta^*-\Pi_\C(\theta^*))=0$.
Applying \autoref{lemma:largest_face} with $\mathcal{K} = T_\C(\Pi_\C(\theta^*))$, $y=\theta^*-\Pi_\C(\theta^*)$, and $\Pi_\K(y)=0$ concludes the proof.
\end{proof}

\begin{proof}[Proof of \autoref{lemma:jump}]
Fix $\theta_0 \in \C$.
For any $\theta^* \notin \C$ such that $\Pi_\C(\theta^*) = \theta_0$,
\autoref{lemma:drop_constraints} implies the locally polyhedral condition
\eqref{split:polyhedral_condition} holds,
and thus
\autoref{lemma:hyperplane_intersection_face} establishes that
$T_\C(\Pi_\C(\theta^*)) \cap (\theta^* - \Pi_\C(\theta^*))^\perp$
is a face of the tangent cone $T_\C(\theta_0)$.

Since the tangent cone has finitely many faces,
the supremum is actually a maximum over the statistical dimensions of finitely
many such lower-dimensional faces. Thus it remains to show
\begin{equation}
\delta(T_\C(\Pi_\C(\theta^*)) \cap (\theta^* - \Pi_\C(\theta^*))^\perp)
< \delta(T_\C(\theta_0))
\end{equation}
for each $\theta^* \notin \C$ such that $\Pi_\C(\theta^*) = \theta_0$.

The set $(\theta^* - \Pi_\C(\theta^*))^\perp$ is a hyperplane
(not all of $\R^n$) because $\theta^* \notin \C$.
Using the fact that the tangent cone $T_\C(\theta_0)$ has nonempty interior
(because it contains the translation $F_\C(\theta_0)$ of $\C$),
we see that the intersection
$T_\C(\Pi_\C(\theta^*)) \cap (\theta^* - \Pi_\C(\theta^*))^\perp$
is a face that
that lies in a strictly lower-dimensional subspace of $\R^n$,
and is therefore strictly smaller than the full cone $T_\C(\theta_0)$.
Thus, we just need to show $\delta(T') < \delta(T)$
for any polyhedral cone $T$ with nonempty interior in $\R^n$,
and any face $T'$ of $T$ that lies in a strictly lower-dimensional subspace of $\R^n$.

For a point $x \in \R^n$ and a set $S \subseteq \R^n$ let $d(x, S) \coloneqq \inf_{\theta \in S} \norm{x - \theta}$.
Note that the Moreau decomposition for cones \cite[Sec. B]{amelunxen2014living} implies $\norm{\Pi_\K(x)} = d(x, \K^\circ)$ for any $x \in \R^n$ and any cone $\K$,
where $\K^\circ \coloneqq \{u \in \R^n : \inner{u,\theta} \le 0, \forall \theta \in \K\}$
denotes the polar cone of $\K$.
Since $T^\circ \subseteq (T')^\circ$,
we have
\begin{equation}
d(x, (T')^\circ) \le d(x, T^\circ),
\qquad \forall x \in \R^n.
\end{equation}
Thus, if
we show the random vector $Z$ has nonzero probability of being in the set
\begin{equation}
\mathcal{A} \coloneqq \{x \in \R^n : d(x, (T')^\circ) < d(x, T^\circ)\}
= \{x \in \R^n : \norm{\Pi_{T'}(x)} < \norm{\Pi_T(x)}\},
\end{equation}
then we immediately have the desired strict inequality
\begin{equation}
\delta(T') = \E d(Z, (T')^\circ) < \E d(Z, T^\circ) = \delta(T).
\end{equation}
To prove the above claim that $\P(Z \in \mathcal{A}) > 0$, we show below that the interior of $T$ is contained in $\mathcal{A}$;
then our assumption on $Z$ will conclude the proof.

Let $x$ be in the interior of $T$. Then $x \in T \setminus T'$.
Moreover, if we let $U$ be the smallest linear subspace of $\R^n$ containing $T'$,
then $x \notin U$ as well.
Note the the Pythagorean theorem implies
\begin{equation}\label{equation:strict_inequality}
\norm{\Pi_T(x)}^2
= \norm{x}^2
= \norm{\Pi_U(x)}^2 + \norm{x - \Pi_U(x)}^2
> \norm{\Pi_U(x)}^2.
\end{equation}
We also have
\begin{equation}
\Pi_{T'}(x)
= \argmin_{\theta \in T'} \norm{\theta - x}^2
= \argmin_{\theta \in T'} \braces*{
    \norm{\theta - \Pi_U(x)}^2 + \norm{\Pi_U(x) - x}^2
}
= \Pi_{T'}(\Pi_U(x)),
\end{equation}
so combining this with the above inequality \eqref{equation:strict_inequality}
and the optimality condition \eqref{equation:optimality_condition_cone} for
the projection of $\Pi_U(x)$ onto the cone $T'$, we have
\begin{equation}
\norm{\Pi_{T'}(x)}^2
= \norm{\Pi_{T'}(\Pi_U(x))}^2
= \norm{\Pi_U(x)}^2 - \norm{\Pi_U(x) - \Pi_{T'}(\Pi_U(x))}^2
\le \norm{\Pi_U(x)}^2
< \norm{\Pi_T(x)}^2,
\end{equation}
and thus $x \in \mathcal{A}$.
\end{proof}

\begin{proof}[Proof of \autoref{lemma:project_hyperplane}]
The lemma holds immediately if $\theta^* \in \T$, so we assume $\theta^* \notin \T$.

By translating, we may without loss of generality assume $\Pi_\T(\theta^*)=0$
so that the cone is centered at $0$ and can be written as
$\T = \{u : Au \le 0\}$ for some number of constraints $m$
and some matrix $A \in \R^{m \times n}$.
The objective then reduces to
\begin{equation}
    \Pi_\T(y) \in (\theta^*)^\perp,
    \qt{for all $y \in B_r(\theta^*)$.}
\end{equation}

For any $y \in \R^n$ let $J_y \subseteq \{1,\ldots,m\}$ be as defined in \autoref{lemma:largest_face} for our polyhedral cone $\T$; it characterizes the largest face of $\T$ that lies in $(\theta^*)^\perp$.
We claim there exists $r>0$ such that
\begin{equation}\label{equation:small_ball_claim}
\{u : A_{J_y} u = 0\} \subseteq (\theta^*)^\perp,
\quad \forall y \in B_r(\theta^*).
\end{equation}
If not, then there exists a sequence of points $y_k \notin \T$ converging to $\theta^*$
such that $\{u : A_{J_{y_k}} u = 0\} \not\subseteq (\theta^*)^\perp$
for all $k$.
Since there are finitely many distinct subsets $J_{y_k}$, we may take a subsequence
and without loss of generality assume it is common subset $J=J_{y_k}$ for all $k$,
and $\{u:A_J u = 0\} \not\subseteq (\theta^*)^\perp$.
By the definition \eqref{equation:indices_condition} of $J_{y_k}$,
any $u$ satisfying $A_J u = 0$ also satisfies $\inner{y_k - \Pi_\T(y_k), u}=0$.
By continuity of $\Pi_\T$ and taking $k \to \infty$, we have $\inner{\theta^*,u} = 0$ as well,
a contradiction.

Finally, since the optimality condition \eqref{equation:optimality condition}
for $\Pi_\T$ implies
$\inner{\Pi_\T(y),y-\Pi_\T(y)}=0$ for any $y \in \R^n$,
\eqref{equation:largest_face2} implies $\Pi_\T(y) \in \{u : A_{J_y}u=0\}$.
Combining this with \eqref{equation:small_ball_claim}
concludes the proof.
\end{proof}

\section{Proofs for \autoref{subsection:isotonic} (isotonic regression)}
\label{section:proofs_isotonic}

\subsection{Proofs of block monotone cone lemmas}
\label{subsection:proofs_block_monotone}

\begin{proof}[Proof of \autoref{lemma:weighted_isotonic_regression}]
The first claim follows from decomposing the squared Euclidean distance into blocks.
\begin{align}
\min_{v \in \S_{\abs{I_1},\ldots,\abs{I_m}}}
\norm{v-z}^2
&= \min_{x \in \S^m} \sum_{j=1}^m \sum_{i \in I_j} (x_j -z_i)^2\\
&= \min_{x \in \S^m} \sum_{j=1}^m \sum_{i \in I_j} ((x_j -\bar{z}_{I_j})^2 + (\bar{z}_{I_j} - y_i)^2)\\
&= \sum_{j=1}^m \sum_{i \in I_j} (z_i-\bar{z}_{I_j})^2+\min_{x \in \S^m} \sum_{j=1}^m \abs{I_j} (x_j-\bar{z}_{I_j})^2.
\end{align}

Let $Z$ and $Z'$ be standard Gaussian in $\R^n$ and $\R^m$ respectively.
If $\abs{I_1}=\cdots=\abs{I_m}=r$, then the first claim implies
\begin{align}
\delta(\S_{\abs{I_1},\ldots,\abs{I_m}})
\coloneqq \E \norm{\Pi_{\S_{\abs{I_1},\ldots,\abs{I_m}}}(Z)}^2
\overset{(i)}{=} r \E \norm{\Pi_{\S^m}(Z'/\sqrt{r})}^2
\overset{(ii)}{=} \E \norm{\Pi_{\S^m}(Z')}^2
\eqqcolon \delta(\S^m)
=\sum_{j=1}^m \frac{1}{j},
\end{align}
where (i) is due to $Z'/\sqrt{r} \overset{d}{=} (\bar{Z}_{I_1},\ldots,\bar{Z}_{I_m})$,
and (ii) is due to $\Pi_\C(cx)=c\Pi_\C(x)$ for a cone $\C$ and $c>0$ (e.g., \cite[Sec. 1.6]{bellec2015sharpshape}).
The statistical dimension of $\S^m$ is proved by \citet[Sec. D.4]{amelunxen2014living}.
\end{proof}

\begin{proof}[Proof of \autoref{lemma:block_monotone_stat_dim}]
We use two useful properties of the statistical dimension of any cone $\C$ \cite[Prop. 3.1]{amelunxen2014living}.
\begin{itemize}
    \item Rotational invariance: for any orthogonal transformation $Q$, we have $\delta(Q\C) = \delta(\C)$.
    \item Invariance under embedding: $\delta(\C \times \{0\}^k) = \delta(\C)$.
\end{itemize}
Thus it suffices to provide an orthogonal transformation $Q$ such that $Q\S_{\abs{I_1},\ldots,\abs{I_m}}$
is an embedding of the cone \eqref{equation:block_monotone_embed} into $\R^n$.

Let $e_i$ denote the $i$th standard basis vector in $\R^n$.
Let the last element of each block be denoted $k_j \coloneqq \max I_j$ for $1 \le j \le m$,
with $k_0=0$ for convenience.
The block monotone cone $\S_{\abs{I_1},\ldots,\abs{I_m}}$ is defined by the following constraints for $u \in \R^n$.
\begin{subequations}
\begin{align}
\inner{e_i-e_{i+1}, u}
&\le 0,
& i\in \{k_1,\ldots,k_m\}
\label{align:inequality_constraint}
\\
\inner{e_i-e_{i+1}, u}
&= 0,
& i \in \{1,\ldots,n-1\} \setminus \{k_1,\ldots,k_m\}
\label{align:equality_constraint}
\end{align}
\end{subequations}

Let us focus on an arbitrary block $I_j$.
Consider the $\abs{I_j} \times \abs{I_j}$ matrix
\begin{equation}
\tilde{A}_j
=
\begin{bmatrix}
1 & &
\\
-1 & 1 &
\\
& -1 & 1
\\
 &  & \ddots & \ddots\\
& & & -1 & 1
\end{bmatrix}
\end{equation}
Because $\tilde{A}_j$ is full rank,
the QR decomposition implies there exists an $\abs{I_j} \times \abs{I_j}$ orthogonal matrix $\tilde{Q}_j$ such that $\tilde{R}_j\coloneqq\tilde{Q}_j \tilde{A}_j$ is upper triangular with positive diagonal entries, and this decomposition is unique.

The block diagonal matrix $Q$ with blocks $\tilde{Q}_1,\ldots,\tilde{Q}_m$
is an $n \times n$ orthogonal matrix.
Let $A$ and $R$ also be block diagonal, each constructed similarly using the $\tilde{A}_j$ and the $\tilde{R}_j$ respectively, so that $U=QA$.
We consider $Q \S_{\abs{I_1},\ldots,\abs{I_m}}$.
We use the fact that if $v=Qu$ then $\inner{b,u} \le 0 \iff \inner{Qb, v} \le 0$
to rewrite the constraints \eqref{align:inequality_constraint}
and \eqref{align:equality_constraint}.
The following hold for each $j=1,\ldots,m$.
\begin{itemize}
    \item
    Note that the $i$th column of $A$ is $a_i=e_i-e_{i+1}$
    when $k_{j-1} < i < k_j$.
    For these $i$, the equality constraints \eqref{align:equality_constraint}
    after the transformation become $0 = \inner{Q(e_i-e_{i+1}),v} = \inner{r_i,v}$ where $r_i$ is the $i$th column of $R$.
    Since $\tilde{R}_j$ is upper triangular with nonzero diagonal entries (because $\tilde{A}_j$ is full rank),
    induction on $i=k_{j-1}+1,\ldots,k_j - 1$ implies
    \begin{equation}
    v_i = 0, \qquad k_{j-1} < i < k_j.
    \end{equation}
    \item
    When $j<m$, we have $e_{k_j}=a_k$
    and $e_{k_j+1} = a_{k_j} + a_{k_j+1} + \cdots + a_{k_{j+1}}$
    Thus for $j<m$ the inequality constraint $\inner{e_{k_j}-e_{k_j+1},u} \le 0$ becomes
    \begin{equation}
    0 \ge \inner{Q(e_{k_j} - e_{k_j+1}), v}
    = \inner{r_{k_j} - r_{k_j+1} - r_{k_j+2} - \cdots - r_{k_{j+1}}, v}
    = \inner{r_{k_j} - r_{k_{j+1}}, v},
    \end{equation}
    where the last equality is due to $\inner{r_i,v}=0$ for $k_j < i < k_{j+1}$, by the previous point.
    Since $\tilde{R}_j$ and $\tilde{R}_{j+1}$ are each upper triangular,
    the inequality reduces to
    $r_{k_j,k_j} v_{k_j} \le r_{k_{j+1}, k_{j+1}} v_{k_{j+1}}$,
    where $r_{k,k}$ denotes the $k$th diagonal entry of $R$.
    \autoref{lemma:qr} (proved below) computes these diagonal elements and yields
    \begin{equation}
    \frac{v_{k_j}}{\sqrt{\abs{I_j}}}  \le \frac{v_{k_{j+1}}}{\sqrt{\abs{I_{j+1}}}}.
    \end{equation}
\end{itemize}

Therefore we have shown that
$Q \S_{\abs{I_1},\ldots,\abs{I_m}}$
consists of all vectors satisfying
\begin{equation}
\frac{v_{k_1}}{\sqrt{\abs{I_1}}} \le \frac{v_{k_2}}{\sqrt{\abs{I_2}}} \le \cdots \le \frac{v_{k_m}}{\sqrt{\abs{I_m}}},
\text{ and }
v_i=0, \forall i \in \{1,\ldots,n\} \setminus \{k_1,\ldots,k_m\}.
\end{equation}
We have thus verified the claim that $Q\S_{\abs{I_1},\ldots,\abs{I_m}}$
is an embedding of \eqref{equation:block_monotone_embed} into $\R^n$.

When the blocks all have equal size $r$, the cone \eqref{equation:block_monotone_embed} becomes the monotone cone $\S^m$, whose statistical dimension is $\sum_{j=1}^m \frac{1}{j}$ \cite[Sec. D.4]{amelunxen2014living}.
\end{proof}

\begin{lemma}\label{lemma:qr}
Consider the $n \times n$ matrix
\begin{equation}
A=
\begin{bmatrix}
1 & &
\\
-1 & 1 &
\\
& -1 & 1
\\
 &  & \ddots & \ddots\\
& & & -1 & 1
\end{bmatrix}.
\end{equation}
There exists a unique orthogonal matrix $Q$ and a unique upper triangular matrix $R$ with positive diagonal entries such that $A=QR$.
The bottom-right entry of $R$ is $r_{n,n} = 1/\sqrt{n}$.
\end{lemma}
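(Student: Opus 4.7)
The plan is to handle the existence/uniqueness claim quickly via a standard fact, and then extract $r_{n,n}$ by a direct Gram--Schmidt argument that avoids computing the full factorization.

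First, observe that $A$ is lower triangular with $1$'s on the diagonal, hence invertible. For any invertible matrix, the decomposition $A = QR$ with $Q$ orthogonal and $R$ upper triangular with positive diagonal exists and is unique (this is the standard uniqueness of the QR decomposition, obtained e.g.\ by applying Gram--Schmidt to the columns of $A$, where the positivity condition fixes the sign ambiguity of each $q_k$). So the only real content is the formula for $r_{n,n}$.

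To compute $r_{n,n}$, I would use the Gram--Schmidt interpretation of QR: write the columns of $A$ as $a_1,\ldots,a_n$, where $a_i = e_i - e_{i+1}$ for $i<n$ and $a_n = e_n$. Then $q_n$ is the unit vector in the direction of $a_n - \Pi_V(a_n)$, where $V \coloneqq \mathrm{span}(a_1,\ldots,a_{n-1})$, and $r_{n,n} = \langle q_n, a_n\rangle = \|a_n - \Pi_V(a_n)\|$ is the distance from $a_n = e_n$ to $V$. The subspace $V$ is easy to identify: each $a_i = e_i - e_{i+1}$ has coordinate sum zero, and the $n-1$ vectors $a_1,\ldots,a_{n-1}$ are linearly independent, so $V$ is exactly the hyperplane $\{v \in \R^n : \sum_i v_i = 0\}$. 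Its unit normal is $\mathbf{1}/\sqrt{n}$, so the distance from $e_n$ to $V$ is $|\langle e_n, \mathbf{1}/\sqrt{n}\rangle| = 1/\sqrt{n}$, giving $r_{n,n} = 1/\sqrt{n}$.

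There is no real obstacle here; the only thing to be a little careful about is identifying $V$ correctly (that the span of the $n-1$ differences $e_i - e_{i+1}$ is the full zero-sum hyperplane, not a proper subspace) and checking that the Gram--Schmidt convention yields positive $r_{n,n}$ (which it does, since $\langle e_n, \mathbf{1}/\sqrt{n}\rangle > 0$, so the choice $q_n = \mathbf{1}/\sqrt{n}$ is consistent with the positive-diagonal normalization). An alternative route, should one prefer a purely algebraic proof, is to compute $R$ from the Cholesky factorization of the tridiagonal matrix $A^\top A$ (which has $2$'s on the diagonal except $(A^\top A)_{n,n}=1$, and $-1$'s on the off-diagonals) and show by induction that $r_{k,k}^2 = (k+1)/k$ for $k<n$, whence $r_{n,n}^2 = 1 - r_{n-1,n}^2 = 1 - (n-1)/n = 1/n$; but the Gram--Schmidt route above is shorter and more transparent.
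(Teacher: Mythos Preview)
Your proof is correct and follows essentially the same approach as the paper: both identify $q_n$ as the unit normal $\mathbf{1}/\sqrt{n}$ to the span of the first $n-1$ columns (the zero-sum hyperplane) and deduce $r_{n,n} = \langle q_n, e_n\rangle = 1/\sqrt{n}$. Your write-up is somewhat more detailed in justifying that $V$ is the full zero-sum hyperplane and in checking the sign convention, but the core idea is identical.
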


\begin{proof}
Let $q_i$ be the $i$th column of $Q$.
The last column $q_n$ is orthogonal to the span of the first $n-1$ columns of $A$,
so $q_n$ is either $(1,\ldots,1)/\sqrt{n}$ or its negative. The positivity constraint on the diagonal entries of $R$ implies the former, and thus $r_{n,n} = \inner{q_n, e_n} = 1/\sqrt{n}$.
\end{proof}

\subsection{Statistical dimension of the block monotone cone in general}
\label{subsection:stat_dim_block_monotone}

In \autoref{lemma:block_monotone_stat_dim} we provided an expression
for the statistical dimension of the block monotone cone $\S_{\abs{I_1},\ldots,\abs{I_m}}$ when the block sizes were equal.
In general, the statistical dimension can be higher or lower than $\sum_{j=1}^m \frac{1}{j}$.
Consider the following examples for $m=3$.

\autoref{lemma:block_monotone_stat_dim} implies
$\S_{n-2,1,1}$ has the same statistical dimension as
$\{v \in \R^3 : v_1/\sqrt{n-2} \le v_2\le v_3\}$.
As $n \to \infty$ this latter cone approaches $\{v \in \R^3 : 0 \le v_2 \le v_3\}$ which has statistical dimension $1+\parens*{\frac{1}{8} \cdot 2 + \frac{1}{2} \cdot 1}=\frac{7}{4}=1.75$,
which is smaller than $\sum_{j=1}^3 \frac{1}{j}=\frac{11}{6} = 1.8\bar{3}$.

On the other hand, $\S_{1,n-2,1}$ has the same statistical dimension as $\{v \in \R^3 : v_1 \le v_2/\sqrt{n-2} \le v_3\}$. As $n \to \infty$ this latter cone approaches
$\{v \in \R^3 : v_1 \le 0, v_3 \ge 0\}$ which has statistical dimension $1+\frac{1}{2} + \frac{1}{2} = 2$, which is larger than $1.8\bar{3}$.

We suspect that the approach used to prove the statistical dimension of $\S^n$ \cite[Sec. D.4]{amelunxen2014living},
which uses the theory of finite reflection groups,
cannot be generalized for $\S_{\abs{I_1},\ldots,\abs{I_m}}$, due to the asymmetry of \eqref{equation:block_monotone_embed}.
However, using a result of \citet{klivans2011projection}, it is possible to show that the average statistical dimension among all block monotone cones with a given [unordered] set of $m$ block sizes is $H_m$ \cite[Prop. 6.6]{amelunxen2015intrinsic}.

\subsection{Proof of \autoref{proposition:isotonic}}
\label{subsection:proof_isotonic}

When applying \autoref{theorem:lb_polyhedron}, it is useful to characterize $\S^n$ and its tangent cones using conic generators. If $T \subseteq \R^n$ is a cone and there exist $x_1,\ldots,x_p \in T$ such that
\begin{equation}
    T = \braces*{
        \sum_{i=1}^p \alpha_i x_i:
        \alpha_i \ge 0, \forall i
    },
\end{equation}
then we call $x_1,\ldots,x_p$ the \textit{conic generators} of $T$, and write
\begin{equation}
    T = \op{cone}\{x_1,\ldots,x_p\}.
\end{equation}

\begin{lemma}\label{lemma:intersection_cone_gen}
Let $\theta^* \in \R^n$ and let $\C \subseteq \R^n$ be closed and convex.
If the tangent cone $T_\C(\Pi_\C(\theta^*))$ is generated by $x_1,\ldots,x_p \in \R^n$,
i.e. $T_\C(\Pi_\C(\theta^*)) = \op{cone}\{x_1,\ldots,x_p\}$,
then
\begin{equation}
    T_\C(\Pi_\C(\theta^*)) \cap (\theta^*-\Pi_\C(\theta^*))^\perp
    = \op{cone}(\{x_1,\ldots,x_p\} \cap (\theta^*-\Pi_\C(\theta^*))^\perp).
\end{equation}
\end{lemma}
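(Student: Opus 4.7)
The plan hinges on the observation that $(\theta^* - \Pi_\C(\theta^*))^\perp$ is a supporting hyperplane for the tangent cone at its apex. Indeed, the projection optimality condition $\langle z - \Pi_\C(\theta^*), \theta^* - \Pi_\C(\theta^*)\rangle \le 0$ for all $z \in \C$ (recorded in Section 2) extends, by scaling and taking closures in the definition of the tangent cone, to the statement $\langle u, \theta^* - \Pi_\C(\theta^*)\rangle \le 0$ for every $u \in T_\C(\Pi_\C(\theta^*))$. In particular each generator $x_i$ satisfies $\langle x_i, \theta^* - \Pi_\C(\theta^*) \rangle \le 0$, so the entire cone sits on one side of the hyperplane.

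The $\supseteq$ inclusion is then immediate, since $(\theta^* - \Pi_\C(\theta^*))^\perp$ is a linear subspace and hence closed under conic combinations, and each generator on the right-hand side is by construction also in $T_\C(\Pi_\C(\theta^*))$. For the reverse inclusion, I would take an arbitrary $u \in T_\C(\Pi_\C(\theta^*)) \cap (\theta^* - \Pi_\C(\theta^*))^\perp$, write $u = \sum_i \alpha_i x_i$ with $\alpha_i \ge 0$, and expand the orthogonality condition as
\begin{equation}
0 = \langle u, \theta^* - \Pi_\C(\theta^*)\rangle = \sum_i \alpha_i \langle x_i, \theta^* - \Pi_\C(\theta^*)\rangle.
\end{equation}
Each summand is non-positive by the half-space containment above, so a vanishing sum forces every term to vanish. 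Hence whenever $\alpha_i > 0$ the generator $x_i$ must itself lie in $(\theta^* - \Pi_\C(\theta^*))^\perp$, and dropping the generators with $\alpha_i = 0$ exhibits $u$ as a conic combination of elements of $\{x_1,\ldots,x_p\} \cap (\theta^* - \Pi_\C(\theta^*))^\perp$, as desired.

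There is no real obstacle here: once the optimality-based half-space containment is recorded, the rest is the standard fact that intersecting a finitely generated cone with a supporting hyperplane through the apex picks out exactly the face spanned by those generators that lie on the hyperplane. The only step requiring a sentence of care is the extension from the optimality inequality on $\C$ to the inequality on all of $T_\C(\Pi_\C(\theta^*))$, which follows from the conic-closure definition \eqref{tcd}.
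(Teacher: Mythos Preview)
Your proof is correct and matches the paper's argument essentially line for line: both record the optimality-based half-space containment $\langle x_i,\theta^*-\Pi_\C(\theta^*)\rangle\le 0$, note that the $\supseteq$ inclusion is immediate, and for $\subseteq$ expand $0=\sum_i \alpha_i\langle x_i,\theta^*-\Pi_\C(\theta^*)\rangle$ to force $\alpha_i=0$ whenever $x_i$ is off the hyperplane.
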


\begin{proof}[Proof of \autoref{lemma:intersection_cone_gen}]
The inclusion $\supset$ is immediate, so it remains to prove the inclusion $\subseteq$.
Note that the optimality condition \eqref{equation:optimality condition} implies
$\inner{\theta^* - \Pi_\C(\theta^*), x} \le 0$ for any $x \in T_\C(\Pi_\C(\theta^*))$.
In particular, if $v \in T_\C(\Pi_\C(\theta^*)) \cap (\theta^*-\Pi_\C(\theta^*))^\perp$,
then $v$ can be written as the conical combination $v=\sum_{i=1}^p \alpha_i x_i$ with $\alpha_i \ge 0$,
and we have
\begin{equation}
    0
    = \inner{\theta^*-\Pi_\C(\theta^*), v}
    = \sum_{i=1}^p \alpha_i \underbrace{\inner{\theta^*-\Pi_\C(\theta^*), x_i}}_{\le 0}.
\end{equation}
Thus, if a generator $x_i$ is not in the hyperplane $(\theta^*-\Pi_\C(\theta^*))^\perp$,
then $\alpha_i=0$, so $x_i$ does not contribute in the conical combination of $v$.
Thus, $v$ can be written as a conical combination
of generators in $(\theta^*-\Pi_\C(\theta^*))^\perp$.
\end{proof}

We are now ready to prove \autoref{proposition:isotonic}.

\begin{proof}[Proof of \autoref{proposition:isotonic}]
By \autoref{theorem:lb_polyhedron},
it suffices to prove that the statistical dimension term is
$\sum_{k=1}^K \delta\parens*{\S_{\abs{I_1^k},\ldots, \abs{I^k_{m_k}}}}$.

For $p \ge 1$ let
\begin{equation}
    M_p \coloneqq
    \begin{bmatrix}
        -1 & -1 & \cdots & -1
        \\
        1 & 1 & \cdots & 1
        \\
        & 1 & \cdots & 1
        \\
        & & \ddots & \vdots
        \\
        & & & 1
    \end{bmatrix}
    \in \R^{(p+1) \times p}.
\end{equation}
The rows of $M_p$ are the conic generators of $\S^p$.

Suppose first that $\Pi_{\S^n}(\theta^*)$ is constant, so that $K=1$ and $J_1=\{1,\ldots,n\}$.
Then
$\Pi_{\S^n}(\theta^*) = (\mu_1,\mu_1,\ldots,\mu_1)$
where $\mu_1\coloneqq \frac{1}{n} \sum_{i=1}^n \theta^*_i$;
this follows directly by minimizing $\sum_{i=1}^n (\theta_i^* - \mu_1)^2$
with respect to $\mu_1$.

The finest partition $(I^1_1,\ldots,I^1_{m_1})$ of $J_1$ into blocks satisfying
\eqref{equation:partition_mean} can be constructed greedily as follows.
Begin populating $I^1_1$ with the elements of $\{1,\ldots,n\}$ in order,
stopping as soon as the mean of the elements of $I^1_1$ is $\mu_1$.
Then begin populating $I^1_2$ with the remaining elements in order,
again stopping when the mean of the elements in $I^1_2$ is $\mu_1$.
Continue in this manner until
the last element $n$ is placed in a subset $I^1_{m_1}$.
The mean of the elements of this last subset $I^1_{m_1}$ is $\mu_1$ as well,
since the mean of all components of $\theta^*$ is $\mu_1$.
Thus this partition satisfies \eqref{equation:partition_mean}.
To establish uniqueness, note that if some other partition of $J_1$ satisfies \eqref{equation:partition_mean},
then our partition $(I^1_1,\ldots,I^1_{m_1})$ must be a refinement,
due to the greedy construction.

Because $\Pi_{\S^n}(\theta^*)$ is constant,
the tangent cone there is
$T_{\S^n}(\Pi_{\S^n}(\theta^*))=\S^n$
\cite[Prop. 3.1]{bellec2015sharpshape}, which is generated by the rows of $M_n$.
In order to use \autoref{lemma:intersection_cone_gen},
we need to determine which rows of $M_n$ are in the hyperplane
$(\theta^* - \Pi_{\S^n}(\theta^*))^\perp$.
We already know the mean of the components of
$\theta^* - \Pi_{\S^n}(\theta^*)$
is zero, so the first two rows are in the hyperplane.

We claim that exactly $m_1-1$ of the remaining
$n-1$ rows of $M_n$ also lie in the hyperplane.
Explicitly, if $(I^1_1,\ldots,I^1_{m_1})$
is without loss of generality assumed to be sorted in increasing order,
then the remaining rows of $M_n$ that lie in the hyperplane are
the indicator vectors for
\begin{equation}\label{equation:subsets}
    \bigcup_{j= u}^{m_k} I^1_j,
    \qquad 2 \le u \le m_k.
\end{equation}
No other rows of $M_n$ can be in the hyperplane,
else there would exist a finer partition of $J_1$.

So, \autoref{lemma:intersection_cone_gen} implies
$T_{\S^n}(\Pi_{\S^n}(\theta^*))
\cap (\theta^*-\Pi_{\S^n}(\theta^*))^\perp$
is the cone generated by $(-1,\ldots,-1)$, $(1,\ldots,1)$,
and the indicator vectors of the subsets \eqref{equation:subsets},
otherwise known as the cone of nondecreasing vectors that are
piecewise constant on the blocks $I^1_1,\ldots,I^1_{m_1}$.
Its statistical dimension is denoted by $\delta(\S_{\abs{I^1_1},\ldots,\abs{I^1_{m_1}}})$.
This concludes the proof in the case when $\Pi_{\S^n}(\theta^*)$ is constant.

We now turn to the general case where
$\Pi_{\S^n}(\theta^*)$ is piecewise constant
with values $\mu_1 < \cdots < \mu_K$
on $J_1,\ldots,J_K$ respectively.
We claim
\begin{equation}\label{equation:monotone_proj}
\mu_k = \frac{1}{\abs{J_k}} \sum_{i \in J_k} \theta^*_i.
\end{equation}
Since $\S^n$ is a cone, the projection satisfies
$\inner{\theta^* - \Pi_{\S^n}(\theta^*), x} \le 0$ for all $x \in \S^n$, with equality if $x=\Pi_\C(\theta^*)$ (e.g., \cite[Sec. 1.6]{bellec2015sharpshape}).
Letting $x_1,\ldots,x_{n+1}$ be the conic generators of $\S^n$ (the rows of $M_n$), we have $\Pi_\C(\theta^*) = \sum_{i=1}^{n+1} \alpha_i x_i$ for some coefficients $\alpha_i \ge 0$. Then,
\begin{equation}
    0 = \inner{\theta^* - \Pi_{\S^n}(\theta^*), \Pi_\C(\theta^*)} = \sum_{i=1}^{n+1} \alpha_i \underbrace{\inner{\theta^* - \Pi_{\S^n}(\theta^*), x_i}}_{\le 0},
\end{equation}
which implies $\inner{\theta^* - \Pi_{\S^n}(\theta^*), x_i}=0$ if $\alpha_i > 0$.
Consequently, if $\Pi_{\S^n}(\theta^*)$ changes value from component $j-1$ to $j$, then $\sum_{i=j}^n [\theta^*_i - (\Pi_{\S^n}(\theta^*))_i] = 0$. Thus \eqref{equation:monotone_proj} holds.

By Proposition 3.1 of \cite{bellec2015sharpshape},
the tangent cone is
\begin{equation}
    T_{\S^n}(\Pi_{\S^n}(\theta^*))
    = \S^{n_1} \times \cdots \times \S^{n_K},
\end{equation}
which is generated by the rows of the block diagonal matrix
\begin{equation}
    A\coloneqq
    \begin{bmatrix}
        M_{n_1}\\
        & \ddots\\
        && M_{n_K}
    \end{bmatrix}.
\end{equation}
To find which rows of $A$ are in the hyperplane
$(\theta^* - \Pi_{\S^n}(\theta^*))^\perp$,
we can treat each block $M_{n_k}$ separately
and repeat the above argument.
Doing so shows that
$T_{\S^n}(\Pi_{\S^n}(\theta^*))
\cap (\theta^*-\Pi_{\S^n}(\theta^*))^\perp$
is the cone of vectors that are piecewise constant on
$(I^1_1,\ldots,I^1_{m_1},\ldots,I^K_1, \ldots, I^K_{m_K})$
and are increasing within each of the blocks  $(J_1,\ldots,J_K)$.
The statistical dimension of this cone is $\sum_{k=1}^K \delta(\S_{\abs{I^k_1},\ldots,\abs{I^k_{m_k}}})$.
\end{proof}

\section{Proof of \autoref{proposition:ball}}
\label{section:ball_proof}

Let $r\coloneqq \norm{\theta^*}$.
By rotating the problem, we may without loss of generality assume $\theta^* = (r,0,\ldots,0)$.

Let $E \coloneqq \{Y \in B_{(r-1)/2}(\theta^*)\}$.
Then we have $E \subseteq \{Y \notin \C\}$,
so under the event $E$ we have $\hat{\theta}(Y) = Y/\norm{Y}$.
Noting $\norm{Y}^2=\norm{\theta^*+\sigma Z}^2 = r^2 + 2 \sigma r Z_1 + \sigma^2 \norm{Z}^2$, we have
\begin{equation}
\frac{1}{\sigma^2}\norm{\hat{\theta}(Y) - \Pi_\C(\theta^*)}^2
= \frac{1}{\sigma^2} \parens*{
    \frac{r + \sigma Z_1}{\sqrt{r^2 + 2 \sigma r Z_1 + \sigma^2 \norm{Z}^2}}
    - 1
}^2
+ \frac{\sum_{i=2}^n Z_i^2}{r^2 + 2 \sigma r Z_1 + \sigma^2 \norm{Z}^2}.
\end{equation}
The second term converges to $r^{-2}\sum_{i=2}^n Z_i^2$ as $\sigma \downarrow 0$.
We show the first term vanishes as $\sigma \downarrow 0$.
Defining $g(\sigma) \coloneqq \norm{\theta^*+\sigma Z}$, we have
\begin{align}
g(\sigma) &= \sqrt{r^2 + 2 \sigma r Z_1 + \sigma^2 \norm{Z}^2}\\
g'(\sigma) &= \frac{r Z_1 + \sigma \norm{Z}^2}{g(\sigma)}\\
g''(\sigma) &= \frac{\norm{Z}^2}{g(\sigma)} - \frac{(rZ_1 + \sigma \norm{Z}^2) g'(\sigma)}{g(\sigma)^2}
\end{align}
Moreover we have $g(0)=r$, $g'(0)= Z_1$, and $g''(0) = (\norm{Z}^2-Z_1^2)/r$.
Then by L'H\^opital's rule,
\begin{align}
&\lim_{\sigma \downarrow 0}
\frac{1}{\sigma} \parens*{
    \frac{r + \sigma Z_1}{\sqrt{r^2 + 2 \sigma r Z_1 + \sigma^2 \norm{Z}^2}}
    - 1
}
\\
&= \lim_{\sigma \downarrow 0}
\frac{r+\sigma Z_1 - g(\sigma)}{\sigma g(\sigma)}
= \lim_{\sigma \downarrow 0}
\frac{Z_1 - g'(\sigma)}{g(\sigma) + \sigma g'(\sigma)}
= \frac{Z_1 - Z_1}{r + 0} = 0.
\end{align}
Note $\b{1}_E \to 1$ almost surely as $\sigma \downarrow 0$.
Thus,
$\sigma^{-2} \norm{\hat{\theta}(Y) - \Pi_\C(\theta^*)}^2 \b{1}_E \to r^{-2} \sum_{i=2}^n Z_i^2$ almost surely.
By the upper bound \eqref{equation:bellec_ub} we may use the dominated convergence theorem to get
\begin{equation}
\lim_{\sigma \downarrow 0}
\frac{1}{\sigma^2} \E_{\theta^*} \brackets*{
    \norm{\hat{\theta}(Y) - \Pi_\C(\theta^*)}^2 \b{1}_E
}
= \frac{1}{r^2} \sum_{i=2}^n \E Z_i^2 = \frac{n-1}{r^2}.
\end{equation}
To conclude the proof of the first limit \eqref{align:ball_limit1}, note that
\begin{equation}
\lim_{\sigma \downarrow 0}
\frac{1}{\sigma^2} \E_{\theta^*} \brackets*{
    \norm{\hat{\theta}(Y) - \Pi_\C(\theta^*)}^2 \b{1}_{E^c}
}
=0,
\end{equation}
which holds by the argument used in the proof of \autoref{theorem:lb_polyhedron} (e.g., see the second term in \eqref{equation:tail}).

A similar proof holds for the second limit \eqref{align:ball_limit2}.
Let $E$ and $g(\sigma)$ be the same as before.
Then
\begin{align}
&\frac{1}{\sigma^2} \parens*{
    \norm{\hat{\theta}(Y)-\theta^*}^2
    - \norm{\Pi_\C(\theta^*)-\theta^*}^2
}
\\
&= \frac{1}{\sigma^2} \parens*{
    \frac{r + \sigma Z_1}{\sqrt{r^2 + 2 \sigma r Z_1 + \sigma^2 \norm{Z}^2}}
    - r
}^2
+ \frac{\sum_{i=2}^n Z_i^2}{r^2 + 2 \sigma r Z_1 + \sigma^2 \norm{Z}^2}
- \frac{(r-1)^2}{\sigma^2}
\\
&=
\frac{1}{\sigma^2}\brackets*{\parens*{\frac{r+\sigma Z_1}{g(\sigma)} - r}^2 - (r-1)^2}
+ \frac{\sum_{i=2}^n Z_i^2}{r^2 + 2 \sigma r Z_1 + \sigma^2 \norm{Z}^2}.
\end{align}
Again, the second term tends to $r^{-2} \sum_{i=2}^n Z_i^2$ as $\sigma \downarrow 0$.
To handle the first term we use L'H\^opital's rule again.
Let
\begin{align}
h(\sigma) &\coloneqq
\frac{r+\sigma Z_1}{g(\sigma)} - r
\\
h'(\sigma) &=
\frac{Z_1}{g(\sigma)} - \frac{(r+\sigma Z_1) g'(\sigma)}{g(\sigma)^2}
\\
h''(\sigma) &=
- \frac{Z_1 g'(\sigma)}{g(\sigma)^2}
+ 2\frac{(r+\sigma Z_1)g'(\sigma)^2}{g(\sigma)^3}
- \frac{Z_1 g'(\sigma) + (r+\sigma Z_1)g''(\sigma)}{g(\sigma)^2}
\end{align}
Recalling the limits $g(0) = r$, $g'(0) = Z_1$, and $g''(0) =(\norm{Z}^2-Z_1^2)/r$,
we have $h(\sigma) \to -(r-1)$, $h'(\sigma) \to 0$, and
\begin{equation}
h''(0) =
- \frac{Z_1^2}{r^2}
+ 2 \frac{rZ_1^2}{r^3}
- \frac{Z_1^2 + \norm{Z}^2 - Z_1^2}{r^2}
= \frac{Z_1^2 - \norm{Z}^2}{r^2}.
\end{equation}

Then, L'H\^opital's rule allows us to compute the limit of the first term.
\begin{align}
&\lim_{\sigma \downarrow 0}
\frac{1}{\sigma^2}\brackets*{\parens*{\frac{r+\sigma Z_1}{g(\sigma)} - r}^2-(r-1)^2}
\\
&=
\lim_{\sigma \downarrow 0}
\frac{h(\sigma)^2 - (r-1)^2}{\sigma^2}
= \lim_{\sigma \downarrow 0}
\frac{h(\sigma) h'(\sigma)}{\sigma}
= \lim_{\sigma \downarrow 0}
(h'(\sigma)^2 + h(\sigma) h''(\sigma))
= \frac{(r-1)(\norm{Z}^2-Z_1^2)}{r^2}.
\end{align}
Combining terms yields
\begin{equation}
\frac{1}{\sigma^2} \parens*{\norm{\hat{\theta}(Y) - \theta^*}^2 - \norm{\Pi_\C(\theta^*)-\theta^*}^2}
\b{1}_E
\to \frac{(r-1)(\norm{Z}^2 -Z_1^2) + \sum_{i=2}^n Z_i^2}{r^2}
= \frac{\sum_{i=2}^n Z_i^2}{r},
\end{equation}
so again by dominated convergence with the upper bound \eqref{equation:bellec_ub}, we have
\begin{equation}
\frac{1}{\sigma^2} \E_{\theta^*}\brackets*{
    \parens*{\norm{\hat{\theta}(Y) - \theta^*}^2 - \norm{\Pi_\C(\theta^*)-\theta^*}^2}
    \b{1}_E
}
\to
\frac{n-1}{r}.
\end{equation}
To conclude the proof of \eqref{align:ball_limit2},
note that
\begin{equation}
\frac{1}{\sigma^2} \E_{\theta^*}\brackets*{
    \parens*{\norm{\hat{\theta}(Y) - \theta^*}^2 - \norm{\Pi_\C(\theta^*)-\theta^*}^2}
    \b{1}_{E^c}
}
\to
0,
\end{equation}
which was proved in the proof of \autoref{theorem:lb_polyhedron} (see \eqref{equation:excess_tail}).

\section{Proofs for \autoref{subsection:high_noise_limit}}
\label{section:proofs_high_noise_limit}

The following lemma shows that the left-hand side of \eqref{equation:boundedness} is nonnegative.

\begin{lemma}\label{lemma:boundedness_nonnegative}
For any $\theta_0 \in \C$,
\begin{equation}
\norm{\Pi_{F_\C(\theta_0)}(x)}^2
\ge \norm{\Pi_{K_\C}(x)}^2.
\end{equation}
\end{lemma}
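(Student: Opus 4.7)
The plan is to denote $p \coloneqq \Pi_{F_\C(\theta_0)}(x)$ and $q \coloneqq \Pi_{K_\C}(x)$, and to derive the bound $\norm{q} \le \norm{p}$ via Cauchy--Schwarz once we have suitable inner product inequalities relating the two projections. Two preliminary facts are available from the paper: first, $0 \in F_\C(\theta_0)$ since $\theta_0 \in \C$; second, by part (i) of Lemma~\ref{lemma:core_cone}, $K_\C \subseteq F_\C(\theta_0)$, and $K_\C$ is a cone (being an intersection of tangent cones).

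First, I would apply the optimality condition~\eqref{equation:optimality condition} for the projection $p$ with the test point $z = 0 \in F_\C(\theta_0)$ to conclude $\inner{p, x - p} \ge 0$, i.e., $\inner{p,x} \ge \norm{p}^2$. Next, because $K_\C$ is a cone contained in $F_\C(\theta_0)$, the vector $\lambda q$ lies in $F_\C(\theta_0)$ for every $\lambda \ge 0$. Applying~\eqref{equation:optimality condition} with the test point $z = \lambda q$ yields $\lambda \inner{q, x - p} \le \inner{p, x - p}$ for all $\lambda \ge 0$; letting $\lambda \to \infty$ (or equivalently dividing through by $\lambda$ and sending $\lambda \to \infty$) forces
\begin{equation}
\inner{q, x - p} \le 0, \qquad \text{i.e.,} \qquad \inner{q, x} \le \inner{q, p}.
\end{equation}

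Finally, since $K_\C$ is a cone, the optimality condition~\eqref{equation:optimality_condition_cone} for cones gives the identity $\norm{q}^2 = \inner{q, x}$. Chaining the two inequalities and applying Cauchy--Schwarz,
\begin{equation}
\norm{q}^2 = \inner{q, x} \le \inner{q, p} \le \norm{q}\,\norm{p},
\end{equation}
from which $\norm{q} \le \norm{p}$ follows (the case $\norm{q} = 0$ being trivial), squaring to give the claim.

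The proof is short, and the only conceptual step is recognizing that the cone structure of $K_\C$ together with its inclusion in $F_\C(\theta_0)$ lets us feed the entire ray $\{\lambda q : \lambda \ge 0\}$ into the variational inequality for $p$, which is what produces the one-sided bound $\inner{q, x - p} \le 0$ needed to compare $\norm{q}^2 = \inner{q, x}$ with $\inner{q, p}$.
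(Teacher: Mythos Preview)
Your proof is correct and follows essentially the same route as the paper's: both obtain $\inner{q,x-p}\le 0$ from the variational inequality for $p$ (the paper states this consequence without spelling out the $\lambda\to\infty$ ray argument you give), then combine it with the cone identity $\norm{q}^2=\inner{q,x}$ and Cauchy--Schwarz. Note that your first step, $\inner{p,x}\ge\norm{p}^2$, is never used in the chain $\norm{q}^2=\inner{q,x}\le\inner{q,p}\le\norm{q}\,\norm{p}$ and can be dropped.
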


\begin{proof}[Proof of \autoref{lemma:boundedness_nonnegative}]
Because $K_\C$ is a cone, we have $\inner{x, \Pi_{K_\C}(x)} = \norm{\Pi_{K_\C}(x)}^2$.
Since $K_\C \subseteq F_\C(\theta_0)$, the optimality condition for $\Pi_{F_\C(\theta_0)}(x)$ implies $\inner{x-\Pi_{F_\C(\theta_0)}(x), \Pi_{K_\C}(x)} \le 0$ and thus
\begin{equation}
\norm{\Pi_{K_\C}(x)}^2
\le \inner{\Pi_{F_\C(\theta_0)}(x), \Pi_{K_\C}(x)}
\le \norm{\Pi_{F_\C(\theta_0)}(x)} \norm{\Pi_{K_\C}(x)}.
\end{equation}
Thus $\norm{\Pi_{F_\C(\theta_0)}(x)} \ge \norm{\Pi_{K_\C}(x)}$ and $M_{\theta_0} \ge 0$.
\end{proof}

\begin{proof}[Proof of \autoref{lemma:core_cone}]
We first prove the equalities (i) and (ii).
\begin{enumerate}[(i)]
    \item
    Let $v \in \{u : \R_+ u \subseteq F_\C(\theta_0)\}$ and let $\theta \in \C$.
    For any $c>0$ we have $\theta_0+cv \in \C$, and convexity implies $\theta + \alpha (\theta_0+cv - \theta) \in \C$ for all $\alpha \in [0,1]$.
    For large $c$ we have $\norm{\theta_0 + cv-\theta}>1$ and thus
    $\theta + \frac{\theta_0+cv - \theta}{\norm{\theta_0+cv - \theta}} \in \C$.
    Taking $c \to \infty$ and using the fact that $\C$ is closed yields $\theta + \frac{v}{\norm{v}} \in \C$ and thus $v \in T_\C(\theta)$. Since $\theta$ was arbitrary, we have $v \in K_\C$.

    Conversely, suppose $v \in K_\C$. Let $c^* \coloneqq \sup\{c>0 : \theta_0 + cv \in \C\}$.
    The supremum is over a nonempty set because $v \in T_\C(\theta_0)$.
    Suppose for sake of contradiction that $c^* < \infty$. Since $\C$ is closed, $\theta_0+c^* v \in \C$. Thus $v \in T_\C(\theta_0+c^* v)$ which implies $\theta_0+(c^*+\alpha) v \in \C$ for some $\alpha>0$, contradicting the definition of $c^*$. Thus $c^*=\infty$ and $\theta_0+cv \in \C$ for all $c>0$.

    \item
    Both sides can be expressed as the set of $v \in \R^n$ satisfying $\theta_0 + \sigma v \in \C$ for all $\sigma>0$.
\end{enumerate}

We now prove the second part of the lemma.
The definition \eqref{equation:def_core_cone} implies $K_\C \subseteq T_\C(\theta)$ for any $\theta \in \C$.

Now, assume $F_\C(\theta_0)$ is a cone.
If the reverse inclusion $T_\C(\theta) \subseteq F_\C(\theta)$ holds, then $\theta_0 - \theta \in T_\C(\theta) = F_\C(\theta)$ so $\theta_0 - (\theta-\theta_0) \in \C$.
Conversely, suppose $\theta_0 - (\theta-\theta_0) \in \C$.
If $v \in T_\C(\theta)$, then $\theta + cv \in \C$ for some $c>0$. By convexity,
$\theta_0 + cv/2 \in \C$, so $v \in F_\C(\theta_0)$. Thus $T_\C(\theta) \subseteq F_\C(\theta)$.
\end{proof}

\begin{proof}[Proof of \autoref{proposition:high_sigma_limit}]
We use $Y$ instead of $\theta^* + \sigma Z$ throughout the proof,
but note that $Y$ depends on $\sigma$.

Without loss of generality we can translate the problem so that $\Pi_\C(\theta^*)=0$.

In view of \eqref{equation:dom_conv}, we may use the dominated convergence theorem on $\sigma^{-2} \norm{\Pi_\C(Y) - \Pi_\C(\theta^*)}^2$, so
\begin{align}
&\lim_{\sigma \to \infty}
\frac{1}{\sigma^2} \E\norm{\Pi_\C(Y) - \Pi_\C(\theta^*)}^2
\\
&= \E \lim_{\sigma \to \infty}\frac{1}{\sigma^2}
\norm{\Pi_\mathcal{C}(Y) - \Pi_\C(\theta^*)}^2
& \text{dom. conv. with $\E \norm{Z}^2$}
\\
&= \E \lim_{\sigma \to \infty}\frac{1}{\sigma^2}
\norm{\Pi_\mathcal{C}(Y)}^2
\\
&\overset{(i)}{=} \E \norm{\Pi_{K_\C}(Z)}^2 = \delta(K_\C),
\end{align}
where we verify the equality (i) below.

Similarly, \eqref{equation:dom_conv} allows us to use the dominated convergence theorem again for the excess risk.
\begin{align}
&\lim_{\sigma\to \infty}
\frac{1}{\sigma^2} \parens*{
    \E \norm{\Pi_\C(Y) - \theta^*}^2
    - \norm{\Pi_\C(\theta^*) - \theta^*}^2
}\\
&= \E \lim_{\sigma \to \infty} \frac{1}{\sigma^2} \parens*{
    \norm{\Pi_\C(Y) - \theta^*}^2
    - \norm{\theta^*}
}
& \text{dom. conv. with $\E \norm{Z}^2$}
\\
&= \E \lim_{\sigma \to \infty} \frac{1}{\sigma^2}
\parens*{
    \norm{\Pi_\C(Y)}^2
    - 2 \inner{\Pi_\C(Y), \theta^*}
}
\\
&\overset{(ii)}{=} \E \norm{\Pi_{K_\C}(Z)}^2 = \delta(K_\C).
\end{align}

It remains to verify (i) and (ii).

\begin{enumerate}[(i)]
    \item
    \begin{align}
    &\phantom{{}\le{}}
    \abs*{
        \frac{1}{\sigma^2} \norm{\Pi_\C(Y)}^2
        - \norm{\Pi_{K_\C}(Z)}^2
    }
    \\
    &\le \frac{1}{\sigma^2} \abs*{
        \norm{\Pi_\C(Y)}^2
        - \norm{\Pi_{K_\C}(Y)}^2
    }
    + \abs*{
        \frac{1}{\sigma^2} \norm{\Pi_{K_\C}(Y)}^2
        - \norm{\Pi_{K_\C}(Z)}^2
    }
    \\
    &\le \frac{c}{\sigma^2}
    + \abs*{
        \norm{\Pi_{K_\C}(\theta^*/\sigma + Z)}^2
        - \norm{\Pi_{K_\C}(Z)}^2
    }
    & \text{\autoref{lemma:boundedness_nonnegative}; $K_\C$ is a cone}
    \\
    &\overset{\sigma \to \infty}{\longrightarrow} 0.
    & \text{$x \mapsto \norm{\Pi_{K_\C}(x)}^2$ is continuous}
    \end{align}

    \item We already showed $\norm{\Pi_\C(Y)}^2/\sigma^2 \to \norm{\Pi_{K_\C}(Z)}^2$, so it suffices to show the cross term vanishes.
    Indeed, we have $\norm{\Pi_\C(Y)}/\sigma \to \norm{\Pi_{K_\C}(Z)}$, so
    \begin{equation}
    \frac{1}{\sigma^2}
    \abs*{
        \inner{\Pi_\C(Y),\theta^*}
    }
    \le \frac{1}{\sigma^2} \norm{\Pi_\C(Y)} \norm{\theta^*}
    \overset{\sigma\to \infty}{\longrightarrow} 0.
    \end{equation}
\end{enumerate}
\end{proof}

\begin{proof}[Proof of \autoref{corollary:high_sigma_limit_ex}]

We begin with the first claim.
Since $\C=\R^n_+$ is a cone, we have $K_\C=\R^n_+$.
Provided we verify \eqref{equation:boundedness}, the result follows from \autoref{proposition:high_sigma_limit}.
Let $\theta \coloneqq \Pi_\C(\theta^*)$ and fix $x \in \R^n$.
Then some casework yields
\begin{equation}
\norm{\Pi_{F_\C(\theta)}(x)}^2
- \norm{\Pi_{K_\C}(x)}^2
= \sum_{i=1}^n \max\{x_i,-\theta_i\}^2
- \sum_{i=1}^n \max\{x_i,0\}^2
\le \sum_{i=1}^n \theta_i^2
= \norm{\theta}^2
\eqqcolon c.
\end{equation}

We now turn to the second claim.
If $\C$ is bounded, then by \autoref{lemma:core_cone}, $K_\C=\{u : \R_+ u \subseteq F_\C(\theta_0)\}=\{0\}$ for any $\theta_0 \in \C$.

Conversely, suppose $\C$ is unbounded and fix $\theta_0 \in \C$.
Let
\begin{equation}
U_r \coloneqq \{v \in S^{n-1} : \theta_0 + cv \notin \C \text{ for some } c \in (0,r)\}.
\end{equation}
This set is open: if $(v_n)$ is a sequence in $U_r^c$ converging to $v$,
then the fact that $\C$ is closed implies $\theta_0 + r v_n \in \C$
for all $n$,
and consequently $\theta_0 + rv \in \C$ and finally $v\in U^c_r$.

If $\bigcup_{r > 0} U_r$ is an open cover of the compact set $S^{n-1}$,
then $S^{n-1} \subseteq U_r$ for some $r>0$, which implies $\C \subseteq B_r(\theta_0)$,
a contradiction.
Thus, some direction $v \in S^{n-1}$ does not lie in $\bigcup_{r>0} U_r$,
i.e., $\theta_0 + cv \in \C$ for all $c \ge 0$. This implies $cv \in K_\C$ for all $c \ge 0$.

We now apply \autoref{proposition:high_sigma_limit}.
If $\C$ is bounded, then so is $F_\C(\Pi_\C(\theta^*))$. Choosing $c$ large enough so that $F_\C(\Pi_\C(\theta^*))$ lies in the ball of radius $c$ suffices to satisfy \eqref{equation:boundedness}.
Then \autoref{proposition:high_sigma_limit} implies that the high $\sigma$ limits are $\delta(K_\C)=0$.
\end{proof}

\section*{Acknowledgments}
We are thankful to Dennis Amelunxen for an informative email
correspondence and to Bodhisattva Sen for helpful discussions.

\bibliographystyle{chicago}
\bibliography{AG}

\end{document}